\numberwithin{equation}{section}
\def\R{{\mathbb{R}}}
\def\A{{\mathfrak{A}}}
\def\Argmin{\mathop{\rm Arg\,min}}
\def\conv{{\rm conv}\,}
\def\d{{\rm dist}}
\def\dom{{\rm dom}\,}
\def\xorig{{x_{\rm orig}}}
\newcommand{\ling}[3]{{\rm lin}_{g_{#1}}(#2,#3)}
\newtheorem{theorem}{Theorem}[section]
\newtheorem{definition}{Definition}[section]%
\newtheorem{lemma}{Lemma}[section]
\newtheorem{proposition}{Proposition}[section]
\newtheorem{assumption}{Assumption}[section]
\newtheorem{example}{Example}[section]%
\newtheorem{remark}{Remark}[section]%
\begin{document}

\title[ESQM]{An extended sequential quadratic method with extrapolation}


\author[1]{\fnm{Yongle} \sur{Zhang}}

\author*[2]{\fnm{Ting Kei} \sur{Pong}}\email{tk.pong@polyu.edu.hk}

\author[3]{\fnm{Shiqi} \sur{Xu}}

\affil[1]{\orgdiv{Department of Mathematics}, \orgname{Visual Computing and Virtual Reality Key Laboratory of Sichuan Province, Sichuan Normal University}, \orgaddress{\city{Chengdu}, \country{People's Republic of China}}}

\affil*[2]{\orgdiv{Department of Applied Mathematics}, \orgname{the Hong Kong Polytechnic University}, \orgaddress{\city{Hong Kong}, \country{People's Republic of China}}}

\affil[3]{\orgdiv{Department of Mathematics}, \orgname{Sichuan Normal University}, \orgaddress{\city{Chengdu}, \country{People's Republic of China}}}


\abstract{We revisit and adapt the extended sequential quadratic method (ESQM) in \cite{Ausleder13} for solving a class of difference-of-convex optimization problems whose constraints are defined as the intersection of level sets of Lipschitz differentiable functions and a simple compact convex set. Particularly, for this class of problems, we develop a variant of ESQM, called ESQM with extrapolation (ESQM$_{\rm e}$), which incorporates Nesterov's extrapolation techniques for empirical acceleration. Under standard constraint qualifications, we show that the sequence generated by ESQM$_{\rm e}$ clusters at a critical point if the extrapolation parameters are uniformly bounded above by a certain threshold. Convergence of the whole sequence and the convergence rate are established by assuming Kurdyka-{\L}ojasiewicz (KL) property of a suitable potential function and imposing additional differentiability assumptions on the objective and constraint functions. In addition, when the objective and constraint functions are all convex, we show that linear convergence can be established if a certain exact penalty function is known to be a KL function with exponent $\frac12$; we also discuss how the KL exponent of such an exact penalty function can be deduced from that of the \emph{original} extended objective (i.e., sum of the objective and the indicator function of the constraint set). Finally, we perform numerical experiments to demonstrate the empirical acceleration of ESQM$_{\rm e}$ over a basic version of ESQM, and illustrate its effectiveness by comparing with the natural competing algorithm SCP$_{\rm ls}$ from \cite{yu21}.}

\keywords{ESQM, Extrapolation, KL exponent, Linear convergence}



\maketitle

\section{Introduction}

Extrapolation techniques, due to their simplicity and easy adaptability, have been widely studied in recent years to empirically accelerate first-order methods; see, for example, \cite{Polyak64,SOR00,nesterov83,Brezinski00,SmithFordSidi87} and references therein. Among them, Nesterov's extrapolation techniques \cite{nesterov83,Nesterov2004,Nesterov2005,Nesterov2013} have been successfully applied to accelerate the proximal gradient algorithm \cite{LionsMercier79} for minimizing $f + h$, with $f$ being a convex {\em loss function} with Lipschitz continuous gradient, and $h$ being a proper closed convex and possibly nonsmooth {\em regularizer} with \emph{easy-to-compute} proximal operator. These studies led to the developments of various algorithms and softwares including the well-known algorithm FISTA \cite{beck09} for linear inverse problems and the software TFOCS \cite{BeckerCandesGrant11} for solving a large class of convex cone problems. Nesterov's extrapolation techniques have also been suitably adapted in subsequent works such as \cite{wen17,wen18} in some nonconvex settings, and most of these works also require the proximal operator of (part of) the regularizer to be easy to compute: in the case when $h$ is the indicator function of some closed set $D$, this requirement amounts to saying that the projection onto $D$ can be computed efficiently. In this paper, we consider a class of constrained optimization problems whose constraint sets \emph{do not admit easy projections}, and investigate the adaptation of extrapolation techniques on empirically accelerating a classical algorithm for these problems.

Specifically, we consider the following difference-of-convex (DC) optimization problem with smooth inequality and simple geometric constraints:
\begin{align}\label{eq1}
\min_{x\in \mathbb {R}^n }\quad& P(x):= P_1(x) - P_2(x) \notag \\
\text{s.t.}\quad & g_i(x)\leq 0,~~ i = 1,\ldots, m,\\
& x\in C,  \notag
\end{align}
where $P_1:\mathbb{R}^n \to \R$ and $P_2:\mathbb{R}^n\to \R$ are convex, each $g_i:\mathbb{R}^n\to \R$ is smooth and $\nabla g_i$ is Lipschitz continuous, $C\subseteq \mathbb {R}^n$ is a nonempty compact convex set, and the feasible set $C\cap \mathscr{F}$ is nonempty, where $\mathscr{F}:=\{x\in \mathbb{R}^n : g_i(x)\leq 0, ~i = 1,\dots,m\}$. This class of problems arises naturally in many applications. For example, in compressed sensing, the $P$ can be a sparsity inducing regularizer such as the difference of $\ell_1$ and $\ell_2$ norms \cite{YinLouHeXin15}, the $g_i$, $i = 1,\ldots,m$, can be loss functions based on the noise models in the $i$th transmission channel, and the set $C$ can be used to model some priors such as nonnegativity or boundedness.

Since projections onto the feasible set of \eqref{eq1} are not easy to compute, existing algorithms for \eqref{eq1} usually leverage the Lipschitz continuity of $\nabla g_i$ to build approximations for the feasible sets, leading to relatively easier subproblems. One natural approach for building approximations is to replace $g_i$ by its quadratic majorants at the current iterate. Specific algorithms based on quadratically approximating $g_i$ in \eqref{eq1} include the moving balls approximation algorithm \cite{AusSheTeb10} and its variants (see, e.g., \cite{bolte16,yu21}). Another natural approach for building approximations is to make use of \emph{affine approximations} to $g_i$ at the current iterate, leading to subproblems with even simpler structures. This approach has its roots in
the literature of sequential quadratic programming (SQP) method, and we refer the readers to \cite{GillWong12} and references therein for more discussions of SQP. Here, we are interested in the framework described in \cite{Ausleder13}, which focused on solving \eqref{eq1} when $P$ and each $g_i$ are twice continuously differentiable. We adapt the algorithmic framework described there to solve \eqref{eq1}, and incorporate extrapolation techniques to empirically accelerate the algorithm. We call the resulting algorithm extended sequential quadratic method with extrapolation (ESQM$_{\rm e}$), following the use of the name ESQM in \cite{Ausleder13}. The algorithmic details will be presented in Section~\ref{sec3} below; in particular, in each iteration of ESQM$_{\rm e}$, the $g_i$ in \eqref{eq1} is replaced by its affine approximation at a point \emph{extrapolated from} the current iterate.

In this paper, we study the convergence properties of ESQM$_{\rm e}$ and perform numerical experiments to examine its computational efficiency. In particular, we show that the sequence generated by ESQM$_{\rm e}$ clusters at a critical point if the extrapolation parameters are uniformly bounded above by a certain threshold, under a set of constraint qualifications similarly used in \cite{Ausleder13}. We also construct a suitable potential function and establish the convergence of the whole sequence and its convergence rate by assuming Kurdyka-{\L}ojasiewicz (KL) property of the potential function and additional differentiability conditions on $P_2$ and each $g_i$ in \eqref{eq1}. Furthermore, when $P_2 \equiv 0$ and each $g_i$ is convex, we show that linear convergence can also be established if a certain exact penalty function of \eqref{eq1} is known to be a KL function with exponent $\frac12$. We also discuss how the KL exponent of such an exact penalty function can be derived from that of the function $P+\delta_{C\cap \mathscr{F}}$ from \eqref{eq1} (see Section~\ref{sec2} for notation). Finally, we perform numerical experiments on compressed sensing models with different types of measurement noises taking the form of \eqref{eq1}. Our experiments on random instances illustrate the empirical acceleration of ESQM$_{\rm e}$ over a basic version of ESQM, and also suggest that ESQM$_{\rm e}$ outperforms the natural competing algorithm SCP$_{\rm ls}$ from \cite{yu21}.

The remainder of the paper is organized as follows. We present notation and preliminary materials in Section~\ref{sec2}. Our algorithm, ESQM$_{\rm e}$ is presented in Section~\ref{sec3}, and its subsequential and sequential convergences are established in Section~\ref{sec41}. We discuss the convergence behavior in the convex setting (i.e., $P_2 \equiv 0$ and each $g_i$ is convex in \eqref{eq1}) in Section~\ref{sec42}, and the relationship between the KL exponent of the function $P+\delta_{C\cap \mathscr{F}}$ from \eqref{eq1} and that of the exact penalty function used in the analysis in Section~\ref{sec42} is studied in Section~\ref{sec5}. Numerical experiments are presented in Section~\ref{sec6}.

\section{Notation and preliminaries}\label{sec2}

In this paper, we let $\mathbb{R}$ and $\mathbb{R}_+$ denote the sets of real numbers and nonnegative real numbers respectively, and $\mathbb{N}$ is the set of positive integers. We also let $\mathbb{R}^n$ and $\mathbb{R}_+^n$ denote the $n$-dimensional Euclidean space and its nonnegative orthant respectively. For an $x\in \R$, we let $(x)_+$ denote $\max\{x,0\}$. For an $x\in \mathbb{R}^n$, we let $\|x\|$ denote its Euclidean norm; moreover, for $x$ and $y\in \mathbb{R}^n$, we let $\langle x,y \rangle$ denote their inner product.

For an extended-real-valued function $f:\mathbb{R}^n \to (-\infty,+\infty]$, we say that $f$ is proper if $\dom f := \{ x: f(x)<\infty \}\neq \emptyset$. A proper function $f$ is said to be closed if it is lower semicontinuous. We use $x^k\overset{f}{\to} x$ to denote $x^k\to x$ and $f(x^k)\to f(x)$. For a proper closed function $f$, the regular subdifferential of $f$ at $w\in \dom f$ is given by
$$
\widehat{\partial} f(w):= \left\{ \xi\in\mathbb{R}^n :\liminf_{v\to w,v\neq w} \frac{f(v) - f(w) - \langle \xi, v - w \rangle }{\| v - w\|}\geq 0 \right\}.
$$
The (limiting) subdifferential of $f$ at $w\in \dom f$ is given by
\begin{equation*}
\partial f(w):= \left\{ \xi\in\mathbb{R}^n:\exists w^{k}\overset{f}{\to} w, \xi^{k}\to\xi \text{ with } \xi^{k} \in \widehat{\partial} f(w^{k}) \text{ for each } k\right\},
\end{equation*}
and we set $\partial f(x)=\widehat{\partial}f(x) = \emptyset$ when $ x\notin \dom f$. We also define $\dom \partial f := \{x\in \R^n:\partial f(x)\neq \emptyset\}$. The above subdifferential of $f$ is consistent with the classical subdifferential of $f$ when $f$ is in addition convex; indeed, in this case, we have
$$
\partial f(w)=\left\{ \xi\in\mathbb{R}^n:\langle \xi, v - w \rangle\leq f(v) - f(w) ~~ \forall v\in \mathbb{R}^n \right\};
$$
see, for example, \cite[proposition 8.12]{rock97a}.
For a nonempty closed set $D\subseteq \mathbb{R}^n$, the indicator function $\delta_D$ is defined by
\begin{equation*}
\delta_D(x) = \left\{
\begin{array}{lr}
 0~~ &x\in D,
 \\  \infty ~~ &x\notin D.
 \end{array}
 \right.
\end{equation*}
The normal cone of $D$ at $x\in D$ is defined by
$\mathcal{N}_D(x) :=\partial \delta_D(x)$.
Finally, the distance from a point $x$ to $D$ is denoted by $\d(x, D)$, and the convex hull of $D$ is denoted by $\conv D$.

We next recall some important definitions that will be used in the sequel. We start by recalling the following constraint qualification for \eqref{eq1} (which was also used in \cite{Ausleder13}), and the (associated) first-order optimality conditions for \eqref{eq1}.
\begin{definition}[{{\bf RCQ}}]\label{RCQ}
We say that the Robinson constraint qualification holds at an $x\in\R^n$ for \eqref{eq1} if the following statement holds:
$$ RCQ(x):~\exists y\in C \text{ such that } g_i(x) + \langle\nabla g_i(x), y-x\rangle < 0~~ \forall i = 1,\ldots m.$$
\end{definition}

\begin{definition}[{{\bf Critical point}}]\label{Stationary}
For \eqref{eq1}, we say that $x$ is a critical point of \eqref{eq1} if $x\in C$ and there exists $\lambda=(\lambda_1, \lambda_2, \dots, \lambda_m)\in \mathbb{R}_+^m$ such that $(x, \lambda)$ satisfies the following conditions:
\begin{enumerate}[{\rm (i)}]
    \item $ g_i(x)\leq 0 ~~\forall i=1,\dots,m,$
    \item $ \lambda_i g_i(x)=0 ~~\forall i=1,\dots,m,$
    \item $0\in\partial P_1(x) - \partial P_2(x) + \sum_{i=1}^{m}\lambda_i\nabla g_i(x) + \mathcal{N}_C(x).$
\end{enumerate}	
\end{definition}

One can show using similar arguments as in \cite[Section 2]{yu21} that if $RCQ(x)$ holds at every $x\in C\cap \mathscr{F}$, then any local minimizer of \eqref{eq1} is a critical point of \eqref{eq1}.

Next, we recall the definitions of Kurdyka-{\L}ojasiewicz (KL) property and exponent.
\begin{definition}[{{\bf Kurdyka-{\L}ojasiewicz (KL) property and exponent}}]\label{KLd}
A proper closed function $f$ is said to satisfy the KL property at $\bar{x}\in \dom\partial f$ if there exist $r\in (0,\infty]$, a neighborhood $U$ of $\bar{x}$, and a continuous concave function $\phi:[0,r)\to \mathbb{R}_+$ satisfying $\phi(0)=0$ such that:
\begin{enumerate}[{\rm (i)}]
    \item $\phi$ is continuously differentiable on $(0,r)$ with $\phi'>0$;
    \item for all $x\in U$ with $f(\bar{x})< f(x) < f(\bar{x}) + r$, it holds that
        \begin{align}\label{eq21}
        \phi'(f(x) - f(\bar{x}))\d(0,\partial f(x))\geq 1.
        \end{align}
\end{enumerate}
If $f$ satisfies the KL property at $\bar{x}\in \dom\partial f$ and the $\phi$ in \eqref{eq21} can be chosen as $\phi(\varsigma)= \rho \varsigma^{1-\alpha}$ for some $\rho>0$ and $\alpha\in[0,1)$, then we say that $f$ satisfies the KL property with exponent $\alpha$ at $\bar{x}$.

A proper closed function $f$ satisfying the KL property at every point in $\dom\partial f$ is called a KL function. A proper closed function $f$ satisfying the KL property with exponent $\alpha\in[0,1)$ at every point in $\dom\partial f$ is called a KL function with exponent $\alpha$.
\end{definition}

Many functions are known to satisfy the KL property. For instance, proper closed semi-algebraic functions satisfy the KL property with some exponent $\alpha\in [0,1)$; see \cite{bolte07_2}. The KL property plays an important role in the global convergence analysis of first order methods and the exponent is important in establishing convergence rates; see, for example, \cite{bolte14,attouch13,attouch10,li18}.

Finally, before ending this section, we recall two technical lemmas. The first lemma concerns the uniformized KL property (see \cite[Section~3.5]{bolte14}) and is taken from \cite[Lemma~3.10]{yu21}. The second lemma is a special case of Robinson \cite{Rob75} concerning error bounds for convex functions, which will be used in Section~\ref{sec5} for studying the KL property of a penalty function associated with \eqref{eq1}.
\begin{lemma}\label{KLinequ}
Let $f:\R^n\rightarrow (-\infty,+\infty]$ be a level-bounded proper closed convex function with $\Lambda:= \Argmin f\not=\emptyset$. Let $\underline{f}:=\inf f$. Suppose that $f$ satisfies the KL property at each point in $\Lambda$ with exponent $\alpha\in[0,1)$. Then there exist $\epsilon >0$, $r_0>0$ and $c_0>0$ such that
\[
\d(x, \Lambda)\leq c_0(f(x) - \underline{f})^{1-\alpha}
\]
for any $x\in\dom \partial f$ satisfying $\d(x, \Lambda)\leq \epsilon$ and $\underline{f}\leq f(x) < \underline{f} + r_0$.
\end{lemma}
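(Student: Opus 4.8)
The plan is to first upgrade the pointwise KL assumption on $\Lambda$ to a \emph{uniform} KL inequality on a neighborhood of $\Lambda$, and then to convert that uniform inequality into the stated error bound through a trajectory-length estimate along the subgradient flow of $f$.

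\emph{Step 1 (uniformization).} Since $f$ is level-bounded, proper and closed, $\Lambda=\Argmin f$ is nonempty and compact, and $f\equiv\underline f$ on $\Lambda$. As $f$ satisfies the KL property with exponent $\alpha$ at every point of the compact set $\Lambda$ on which it is constant, I would invoke the uniformized KL property (in the spirit of \cite[Section~3.5]{bolte14}) to obtain a single $\epsilon_0>0$, $r_0>0$ and $\rho>0$ so that, writing $\phi(\varsigma)=\rho\,\varsigma^{1-\alpha}$, the inequality $\phi'(f(x)-\underline f)\,\d(0,\partial f(x))\ge 1$ holds whenever $\d(x,\Lambda)<\epsilon_0$ and $\underline f<f(x)<\underline f+r_0$. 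Merging the local data into the single exponent form $\rho\,\varsigma^{1-\alpha}$ is routine once a common neighborhood has been extracted by compactness.

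\emph{Step 2 (subgradient flow and length estimate).} Fix $x\in\dom\partial f$ with $\underline f\le f(x)<\underline f+r_0$ and $\d(x,\Lambda)$ small (quantified below); the case $f(x)=\underline f$ is trivial since then $x\in\Lambda$, so assume $f(x)>\underline f$. Using convexity, I would consider the subgradient flow $\dot\chi(t)\in-\partial f(\chi(t))$ with $\chi(0)=x$, which by classical results on gradient flows of convex functions exists and is unique on $[0,\infty)$, satisfies $\|\dot\chi(t)\|=\d(0,\partial f(\chi(t)))$ for $t>0$, has $f(\chi(\cdot))$ nonincreasing with $f(\chi(t))\downarrow\underline f$, and converges to some $x^\ast\in\Lambda$. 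While the flow remains in the region of Step~1 one has $\tfrac{d}{dt}f(\chi(t))=-\|\dot\chi(t)\|^2$, so the uniform KL inequality gives
\[
-\frac{d}{dt}\,\phi\bigl(f(\chi(t))-\underline f\bigr)=\phi'\bigl(f(\chi(t))-\underline f\bigr)\,\|\dot\chi(t)\|^2\ge\|\dot\chi(t)\|,
\]
whence integrating over $[0,\infty)$ and using $\phi(0)=0$ yields $\int_0^\infty\|\dot\chi(t)\|\,dt\le\phi(f(x)-\underline f)=\rho\,(f(x)-\underline f)^{1-\alpha}$. Since $\d(x,\Lambda)\le\|x-x^\ast\|\le\int_0^\infty\|\dot\chi(t)\|\,dt$, this delivers the claim with $c_0=\rho$.

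The main obstacle is justifying that the flow never leaves the region where the uniform KL inequality of Step~1 is valid, as otherwise the differential inequality above breaks down. The $f$-value stays in range automatically because $f(\chi(\cdot))$ is nonincreasing; to control the distance I would bootstrap: as long as the flow stays inside, the arc length travelled is at most $\rho\,(f(x)-\underline f)^{1-\alpha}\le\rho\,r_0^{1-\alpha}$, so fixing $\epsilon:=\epsilon_0/2$ and shrinking $r_0$ so that $\rho\,r_0^{1-\alpha}\le\epsilon_0/2$ forces $\d(\chi(t),\Lambda)\le\epsilon+\rho\,r_0^{1-\alpha}\le\epsilon_0$ throughout, and a continuity/connectedness argument rules out the flow ever reaching the boundary $\d(\cdot,\Lambda)=\epsilon_0$. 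Convexity, finally, is what guarantees both the convergence $f(\chi(t))\downarrow\underline f$ and that the limit $x^\ast$ belongs to $\Lambda$, closing the argument.
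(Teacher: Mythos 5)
The paper contains no proof of this lemma to compare against: it is imported verbatim from \cite[Lemma~3.10]{yu21}, with \cite[Section~3.5]{bolte14} cited for the uniformized KL property. Your proposal is a correct, self-contained proof, and it follows precisely the classical route underlying those references: compactness of $\Lambda$ (level-boundedness plus closedness) and constancy of $f$ on $\Lambda$ let you uniformize the pointwise KL data, and since every local desingularizer has the common power form $\rho_i\varsigma^{1-\alpha}$, the merged one can again be taken of the form $\rho\varsigma^{1-\alpha}$; the error bound then follows by dominating $\d(x,\Lambda)$ by the length of a descent trajectory issued from $x$, which the KL inequality caps at $\phi(f(x)-\underline{f})$. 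This is the same mechanism (in continuous time) as the KL-to-error-bound arguments in \cite{bolte17}, so I would classify your proof as the standard approach rather than a genuinely different one.

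Two spots deserve slightly more care than you give them, though neither is a genuine gap. First, in the bootstrap you should shrink $r_0$ so that $\rho r_0^{1-\alpha}<\epsilon_0/2$ holds \emph{strictly}; then at a putative first exit time $T_0$ the length estimate yields $\d(\chi(T_0),\Lambda)\le \epsilon_0/2+\rho r_0^{1-\alpha}<\epsilon_0$, contradicting $\d(\chi(T_0),\Lambda)=\epsilon_0$, which is the clean version of your ``continuity/connectedness'' step. Second, integrating $-\frac{d}{dt}\,\phi\bigl(f(\chi(t))-\underline{f}\bigr)$ needs justification because $\phi'$ blows up at $0$ (so absolute continuity of the composition is not automatic); this is rescued by monotonicity: $t\mapsto \phi\bigl(f(\chi(t))-\underline{f}\bigr)$ is nonincreasing, hence the integral of its a.e.\ derivative over $[0,T]$ is bounded by the difference of its endpoint values, which is all you need. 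The remaining ingredients you invoke as black boxes (existence and uniqueness of the flow from $x\in\dom\partial f$, the minimal-selection identity $\|\dot\chi(t)\|=\d(0,\partial f(\chi(t)))$, the energy identity, and $f(\chi(t))\downarrow\underline{f}$ with convergence of $\chi(t)$ to a minimizer) are indeed classical results of Brezis and Bruck for convex subgradient flows; if one wished to avoid ODE machinery entirely, the identical length estimate can be run on a proximal-point sequence started at $x$, which is the discrete variant commonly used in this literature.
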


\begin{lemma}\label{RobEB}
Let $h:\R^n\to \R^m$ with each component function $h_i$ being convex. Let $\Omega := \{x\in \R^n:\; 0 \in h(x) + \R^m_+\}$ and suppose there exist $x^s\in \Omega$ and $\delta_0 > 0$ such that $\{x\in \R^n:\; \|x\|\le \delta_0\}\subseteq h(x^s) + \R^m_+$.
Then
\[
\d(x,\Omega)\leq \frac{\|x - x^s\|}{\delta_0}\d(0, h(x) + \R^m_+)~~~~~  \forall x\in \R^n.
\]
\end{lemma}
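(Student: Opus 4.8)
The plan is to prove the inequality by a direct convexity argument along the segment joining $x$ to the Slater point $x^s$, so that no appeal to the general error-bound machinery of \cite{Rob75} is needed. First I would record what the hypothesis actually gives. Since $\{z\in\R^m:\|z\|\le\delta_0\}\subseteq h(x^s)+\R^m_+$, I test the inclusion at $z=-\delta_0 e_i$, where $e_i$ is the $i$th standard coordinate vector (so $\|z\|=\delta_0$): this yields $-\delta_0 e_i\ge h(x^s)$ componentwise, and reading off the $i$th coordinate gives $h_i(x^s)\le -\delta_0$ for every $i=1,\dots,m$. Thus $x^s$ is a \emph{uniform} Slater point. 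I would also note, writing $h(y)_+$ for the vector with entries $(h_i(y))_+$, that $\d(0,h(y)+\R^m_+)=\|h(y)_+\|$ for every $y$, because $h(y)+\R^m_+=\{v:v\ge h(y)\}$ and the Euclidean projection of the origin onto this shifted orthant is exactly the componentwise positive part of $h(y)$. In particular the right-hand side of the claimed bound equals $\frac{\|x-x^s\|}{\delta_0}\|h(x)_+\|$.

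Now fix $x\in\R^n$ and set $M:=\max_{1\le i\le m}(h_i(x))_+$. If $M=0$ then $x\in\Omega$ and both sides vanish, so assume $M>0$. The heart of the argument is to travel from $x$ toward $x^s$ along $x_t:=(1-t)x+t x^s$, $t\in[0,1]$, and to locate the first $t$ at which feasibility is reached. By convexity of each $h_i$ together with the Slater bound,
\[
h_i(x_t)\le (1-t)h_i(x)+t\,h_i(x^s)\le (1-t)M - t\,\delta_0,
\]
using $h_i(x)\le M$ and $h_i(x^s)\le -\delta_0$. Hence $h_i(x_t)\le 0$ for all $i$ as soon as $(1-t)M\le t\,\delta_0$, i.e.\ as soon as $t\ge t^\ast:=\frac{M}{M+\delta_0}\in(0,1)$. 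Therefore $x_{t^\ast}\in\Omega$.

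It remains to convert this into the stated estimate. Since $x_{t^\ast}\in\Omega$,
\[
\d(x,\Omega)\le \|x-x_{t^\ast}\| = t^\ast\|x-x^s\| = \frac{M}{M+\delta_0}\,\|x-x^s\| \le \frac{M}{\delta_0}\,\|x-x^s\|,
\]
where the last step uses $M+\delta_0\ge\delta_0$. Finally, since the Euclidean norm dominates the max-norm, $M=\|h(x)_+\|_\infty\le\|h(x)_+\|=\d(0,h(x)+\R^m_+)$, and inserting this into the previous display gives precisely $\d(x,\Omega)\le\frac{\|x-x^s\|}{\delta_0}\,\d(0,h(x)+\R^m_+)$, as desired.

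As for the main obstacle, the computation is short, so the only genuine points requiring care are (a) correctly extracting the componentwise Slater inequality $h_i(x^s)\le -\delta_0$ from the set inclusion in the hypothesis, and (b) recognizing that two deliberately lossy steps — replacing each $h_i(x)$ by the common upper bound $M$ to decouple the admissible $t$ from the index $i$, and then bounding the Euclidean norm below by the max-norm — are exactly what produce the clean constant $1/\delta_0$ (rather than a quantity depending on $M$). I would also remark that $\Omega$ is nonempty and closed, since it contains $x^s$ and each $h_i$ is continuous as a finite-valued convex function, so that $\d(x,\Omega)$ is well defined and bounding it through the single feasible point $x_{t^\ast}$ is legitimate.
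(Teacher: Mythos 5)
Your proof is correct, and it is worth pointing out that it does not merely differ from the paper's argument — the paper gives no argument at all: Lemma~\ref{RobEB} is stated as a special case of Robinson's error bound for convex inequality systems and the citation to \cite{Rob75} serves as the entire proof. Your argument is a legitimate self-contained replacement. Each step checks out: testing the ball inclusion at $z=-\delta_0 e_i$ does yield the uniform Slater inequalities $h_i(x^s)\le -\delta_0$ (and, conversely, these inequalities imply the ball inclusion, so the hypothesis is exactly uniform Slater); the identification $\d(0,h(x)+\R^m_+)=\|h(x)_+\|$ follows from the separable projection onto the shifted orthant; the convexity estimate $h_i(x_t)\le (1-t)M-t\delta_0$ along the segment is valid for $t\in[0,1]$; and the chain $\d(x,\Omega)\le t^\ast\|x-x^s\|\le \frac{M}{\delta_0}\|x-x^s\|\le \frac{\|x-x^s\|}{\delta_0}\,\d(0,h(x)+\R^m_+)$ uses only $M+\delta_0\ge\delta_0$ and $\|\cdot\|_\infty\le\|\cdot\|_2$. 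What your route buys: it is elementary, stays in $\R^n$, and in fact establishes the slightly sharper intermediate bound
\[
\d(x,\Omega)\;\le\;\frac{\|x-x^s\|}{M+\delta_0}\,M,\qquad M:=\max_{1\le i\le m}(h_i(x))_+,
\]
from which the stated inequality follows by two deliberate relaxations. What the paper's route buys is brevity and generality: Robinson's theorem covers convex mappings between normed spaces ordered by general cones, of which the present finite-dimensional orthant case is a very special instance; for a paper that only needs this instance, your one-paragraph proof would have made the lemma self-contained.
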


\section{Algorithmic framework}\label{sec3}

In this section, we present our algorithm for solving \eqref{eq1}. To describe our algorithm, following the discussion in \cite[Section~3]{wen17}, for each $i$, notice that we can rewrite $g_i$ (whose gradient is Lipschitz continuous) as $g_i = g_i^1 - g_i^2$, where $g_i^1$ and $g_i^2$ are two convex functions with Lipschitz continuous gradients. The next remark concerns the Lipschitz continuity moduli of $\nabla g_i$, $\nabla g_i^1$ and $\nabla g_i^2$.

\begin{remark}[Lipschitz continuity moduli]\label{Remarkg}
Here and throughout, we denote a Lipschitz continuity modulus of $\nabla g_i^1$ by $L_{g_i} > 0$ and a Lipschitz continuity modulus of $\nabla g_i^2$ by $\ell_{g_i} \ge 0$. In addition, by taking a larger $L_{g_i}$ if necessary, we will assume without loss of generality that $L_{g_i} \geq \ell_{g_i}$. Then one can show that $\nabla g_i$ is Lipschitz continuous with a modulus $L_{g_i}$. We also define $L_g := \max\{L_{g_i}: i=1,\dots,m\}$ and $\ell_g = \max\{\ell_{g_i}: i=1,\dots,m\}$.
\end{remark}

The algorithm we study in this paper is presented as Algorithm \ref{alg:Framwork} below; here and throughout, for notational simplicity, for each $u$, $w\in \R^n$, we define
\begin{equation}\label{ling}
\ling{i}{u}{w} := g_i(w) + \langle\nabla g_i(w), u - w\rangle \ \ \ \forall i = 1,\ldots,m, \ \ {\rm and}\ \ \ling{0}{u}{w}:= 0.
\end{equation}
We identify our algorithm as an extended sequential quadratic method with extrapolation (ESQM$_{\text{e}}$), where ``extrapolation" refers to \eqref{defyk}. This is because when $\beta_k \equiv 0$, our algorithm reduces to an instance of the ESQM proposed in \cite{Ausleder13}, whose convergence was studied for solving \eqref{eq1} when the $P$ and each $g_i$ in \eqref{eq1} are in addition twice continuously differentiable.\footnote{More precisely, when the $P$ in \eqref{eq1} is smooth with Lipschitz gradient (say, with modulus $L_P$) and $\beta_k \equiv 0$, our algorithm applied to \eqref{eq1} with $P_1(x) := \frac{L_P}2\|x\|^2$ and $P_2(x) := \frac{L_P}2\|x\|^2 - P(x)$ becomes an instance of the ESQM in \cite{Ausleder13}.} Notice that $(x^{k+1},s^{k+1})$ solves the subproblem in \eqref{eq2} if and only if $s^{k+1} = \max_{i = 1,\cdots,m}[\ling{i}{x^{k+1}}{y^k}]_+$ and
\begin{equation}\label{subproblem2}
x^{k+1} \in \Argmin\limits_{x\in C}~ P_1(x) - \langle \xi^k, x \rangle + \theta_k \max_{i = 1,\cdots,m}[\ling{i}{x}{y^k}]_+ + \frac{\theta_k L_g}{2}\| x - y^k \|^2.
\end{equation}
Since problem \eqref{subproblem2} has a unique solution as an optimization problem with a nonempty closed convex feasible set and a (real-valued) strongly convex objective, we conclude that the subproblem in \eqref{eq2} has a unique solution. While this subproblem requires an iterative solver in general, we refer the readers to \cite[Appendix~A]{zhang23} for an efficient routine for solving the subproblem in \eqref{eq2} with some specific $P_1$ when $m = 1$.
\begin{algorithm}
\caption{ESQM$_{\text{e}}$ for solving \eqref{eq1}}\label{alg:Framwork}
\begin{algorithmic}
\STATE
\begin{description}
  \item[\bf Step 0.] Choose $x^{-1}=x^0\in C$, $\theta_0>0$, $d>0$, and $\{\beta_k\}\subseteq\left[0,\sqrt{\frac{L_g}{L_g + \ell_g}}~\right)$ with $\bar\beta:= \sup_k\beta_k < \sqrt{\frac{L_g}{L_g + \ell_g}}$, where $L_g = \max\{L_{g_i}:  i=1,\dots,m\}$ and $\ell_g = \max\{\ell_{g_i}: i=1,\dots,m\}$ as in Remark~\ref{Remarkg}. Set $k = 0$.
  \item[\bf Step 1.] Set
    \begin{equation}\label{defyk}
      y^k = x^k + \beta_k(x^k - x^{k-1}).
    \end{equation}
  \item[\bf Step 2.] Take any $\xi^k\in \partial P_2(x^{k})$ and compute
    \begin{align}\label{eq2}
    (x^{k+1},s^{k+1})\in \Argmin\limits_{(x,s)\in \mathbb{R}^{n+1}}\quad &P_1(x) - \langle \xi^k, x \rangle
	+ \theta_k s + \frac{\theta_k L_g}{2}\| x - y^k \|^2   \notag
	\\ \text{s.t.} \quad &\ling{i}{x}{y^k} \leq s, ~~~ i=1,\dots,m,
	\\& (x,s)\in C\times \mathbb{R_+},  \notag
	\end{align}
where ${\rm lin}_{g_i}$ is defined in \eqref{ling}.
  \item[\bf Step 3.]  If $\ling{i}{x^{k+1}}{y^k}\leq 0$ for all $i$, then $\theta_{k+1}=\theta_k$; otherwise $\theta_{k+1}=\theta_k+d$. Update $k\leftarrow k+1$ and go to step 1.
\end{description}
\end{algorithmic}
\end{algorithm}

The convergence properties of our algorithm will be studied in Section~\ref{sec4}, and we end this section by presenting some useful facts concerning the subproblem \eqref{eq2}. The first two items are simple observations already established in the preceding discussions, and they are stated here for easy reference later.
\begin{lemma}\label{subproremarks}
Suppose that $x^k\in C$ is generated at the beginning of the $k$-th iteration of Algorithm \ref{alg:Framwork} for some $k\geq 0$. Then the following statements hold:
\begin{enumerate}[{\rm (i)}]
    \item $s^{k+1} = \max_{i = 1,\cdots,m}[\ling{i}{x^{k+1}}{y^k}]_+$.
    \item Problem \eqref{eq2} has a unique solution.
    \item Let $g_0:= 0$. Then $x^{k+1}$ is a component of the minimizer of the subproblem in \eqref{eq2} if and only if there exist $\lambda_i^k \geq 0$ for all $i\in I_k(x^{k+1})$ such that $\sum_{i\in I_k(x^{k+1})}\lambda_i^k = 1$ and
        \begin{equation*}
        0\in \partial P_1(x^{k+1}) - \xi^k + \theta_k\sum_{i\in I_k(x^{k+1})} \lambda_i^k\nabla g_i(y^k) + \theta_kL_g(x^{k+1} - y^k) + \mathcal{N}_C(x^{k+1}),
        \end{equation*}
        where
        \begin{equation}\label{defiIk}
        I_k(x): = \left\{\iota\in\{0,1,\cdots,m\}: \ling{\iota}{x}{y^k} = \max_{i = 0, 1,\cdots,m}\ling{i}{x}{y^k}\right\}.
        \end{equation}
\end{enumerate}
\end{lemma}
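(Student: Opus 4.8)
The plan is to dispose of items (i) and (ii) quickly using the reduction already recorded just before Algorithm~\ref{alg:Framwork}, and then to devote the main effort to item (iii) by writing down the first-order optimality condition of the reduced subproblem \eqref{subproblem2}. For (i), I would fix the optimal $x^{k+1}$ and note that in \eqref{eq2} the variable $s$ enters only through the objective term $\theta_k s$ (with $\theta_k>0$) and the constraints $\ling{i}{x^{k+1}}{y^k}\le s$ ($i=1,\dots,m$) together with $s\ge 0$. Since the objective is increasing in $s$, the optimal $s^{k+1}$ must be the smallest feasible value, namely $\max\{0,\max_{i}\ling{i}{x^{k+1}}{y^k}\}=\max_{i=1,\dots,m}[\ling{i}{x^{k+1}}{y^k}]_+$, which is (i); substituting this back into \eqref{eq2} yields exactly \eqref{subproblem2}. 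For (ii), the objective of \eqref{subproblem2} is the sum of the convex $P_1$, a linear term, the convex term $\theta_k\max_{i}[\ling{i}{\cdot}{y^k}]_+$, and the strongly convex quadratic $\frac{\theta_k L_g}{2}\|\cdot-y^k\|^2$, hence is strongly convex; minimizing it over the nonempty closed convex set $C$ gives a unique minimizer. This equivalence also shows that $x^{k+1}$ is the $x$-component of the solution of \eqref{eq2} if and only if it minimizes \eqref{subproblem2}, which is the bridge I would use for (iii).

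For (iii), the key reformulation is to absorb the positive part into the maximum using the convention $\ling{0}{x}{y^k}\equiv 0$ from \eqref{ling}: since $[\,t\,]_+=\max\{0,t\}$, one has $\max_{i=1,\dots,m}[\ling{i}{x}{y^k}]_+=\max_{i=0,1,\dots,m}\ling{i}{x}{y^k}$, a pointwise maximum of affine functions of $x$ whose active-index set at $x^{k+1}$ is precisely $I_k(x^{k+1})$ from \eqref{defiIk}. I would then invoke standard convex subdifferential calculus: each affine piece $\ling{i}{\cdot}{y^k}$ has gradient $\nabla g_i(y^k)$ (with $\nabla g_0\equiv 0$ under the convention $g_0:=0$), so the max rule gives $\partial\big(\max_{i=0,\dots,m}\ling{i}{\cdot}{y^k}\big)(x^{k+1})=\conv\{\nabla g_i(y^k):i\in I_k(x^{k+1})\}$. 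A generic element of this convex hull is $\sum_{i\in I_k(x^{k+1})}\lambda_i^k\nabla g_i(y^k)$ with $\lambda_i^k\ge 0$ and $\sum_{i\in I_k(x^{k+1})}\lambda_i^k=1$.

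Assembling these pieces, the convex optimality condition for \eqref{subproblem2}, namely $0\in\partial P_1(x^{k+1})-\xi^k+\theta_k\,\partial\big(\max_{i}[\ling{i}{\cdot}{y^k}]_+\big)(x^{k+1})+\theta_kL_g(x^{k+1}-y^k)+\mathcal{N}_C(x^{k+1})$, becomes exactly the displayed inclusion in (iii) once the convex-hull representation above is inserted; the multipliers $\lambda_i^k$ are the convex-combination coefficients, and the normalization $\sum_{i\in I_k(x^{k+1})}\lambda_i^k=1$ comes for free. The one point requiring care, and the main obstacle, is justifying that this inclusion is simultaneously necessary and sufficient and that the calculus rules hold as stated. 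This is where convexity does all the work: every summand is a finite convex function with full domain $\R^n$, so the relative interior conditions for the sum rule (including the indicator $\delta_C$) are automatically met, the inclusion $0\in\partial(\cdot)+\mathcal{N}_C$ characterizes global minimizers, and the max rule for finitely many affine functions is standard (e.g.\ \cite{rock97a}). With these tools the equivalence in (iii) follows directly, with no delicate estimates beyond bookkeeping of the active set $I_k(x^{k+1})$ and the $g_0$ convention.
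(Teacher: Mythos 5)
Your proposal is correct and follows essentially the same route as the paper: items (i) and (ii) via the monotone-in-$s$ reduction to \eqref{subproblem2} and strong convexity, and item (iii) by rewriting $\max_{i=1,\dots,m}[\ling{i}{\cdot}{y^k}]_+ = \max_{i=0,1,\dots,m}\ling{i}{\cdot}{y^k}$ (using $g_0\equiv 0$), then applying the convex sum rule and the max rule for affine pieces to obtain $\conv\{\nabla g_i(y^k): i\in I_k(x^{k+1})\}$, exactly as the paper does via \cite[Theorem~23.8]{Ro70} and \cite[Exercise~8.31]{rock97a}. No gaps.
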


\begin{proof}
Items (i) and (ii) were established in the discussions preceding this lemma.

We now prove (iii). Recall that $x^{k+1}$ is a component of the minimizer of the subproblem in \eqref{eq2} if and only if it is a minimizer of the convex problem \eqref{subproblem2}. Using $g_0\equiv 0$ and \cite[Theorem 23.8]{Ro70}, this is further equivalent to
\begin{align*}
0&\!\in\! \partial P_1(x^{k+1}) \!-\! \xi^k \!+\! \theta_k\partial\!\left(\max_{i = 0, 1,\cdots,m}\{\ling{i}{\cdot}{y^k}\}\right)\!(x^{k+1}) \!+\! \theta_kL_g(x^{k+1} - y^k) \!+\! \mathcal{N}_C(x^{k+1})\\
& \!\!\overset{\rm(a)}=\! \partial P_1(x^{k+1}) - \xi^k + \theta_k\conv\{\nabla g_i(y^k): i\in I_k(x^{k+1})\} + \theta_kL_g(x^{k+1} - y^k) + \mathcal{N}_C(x^{k+1}),
\end{align*}
where (a) follows from \cite[Exercise~8.31]{rock97a} with $I_k(\cdot)$ defined in \eqref{defiIk}.
\end{proof}

\section{Convergence properties}\label{sec4}

\subsection{Convergence analysis for ESQM$_{\rm e}$}\label{sec41}
We first show that the successive changes of the $\{x^k\}$ generated by ESQM$_{\rm e}$ vanish.
\begin{theorem}[Vanishing successive changes]\label{suffdec}
Consider \eqref{eq1} and let $\{(x^k,y^k,\theta_k)\}$ be generated by Algorithm \ref{alg:Framwork}. Then the following statements hold:
\begin{enumerate}[{\rm (i)}]
    \item The sequence $\{x^k\}$ belongs to $C$ and is bounded.
    \item Let $\bar{m} := \inf\{P(x):x\in C\}$. Then $\bar m \in \R$ and for any $k\geq 1$,
        $$
        Q(x^{k+1},x^{k},y^{k},\theta_{k+1}) \leq Q(x^k,x^{k-1},y^{k-1},\theta_k) - \left(1 - \frac{L_g + \ell_g}{L_g}\beta_k^2\right)\frac{L_g}{2}\| x^{k} - x^{k-1}\|^2,
        $$
    where
    \[
    Q(x,y,z,\theta):=\frac{P(x) - \bar{m}}{\theta} + \max_{i = 1,\cdots,m}\left[\ling{i}{x}{z}\right]_++ \frac{L_g}{2}\| x - y \|^2 + \frac{L_g}{2}\| x - z \|^2.
    \]
    \item It holds that $\sum_{k = 1}^{\infty} \frac{L_g - (L_g + \ell_g)\beta_k^2}{2} \| x^k - x^{k-1}\|^2 < \infty$, and $\lim_{k \rightarrow\infty}\|x^k - x^{k-1}\| = 0$ and $\lim_{k \rightarrow\infty}\|x^k - y^k\| = 0$.
\end{enumerate}
\end{theorem}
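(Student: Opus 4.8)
The plan is to handle the three parts in order, with part (ii) carrying essentially all the work. Part (i) is immediate: every $x^{k+1}$ is a component of a minimizer of \eqref{eq2}, whose feasible set forces $(x^{k+1},s^{k+1})\in C\times\R_+$; together with $x^0=x^{-1}\in C$ this gives $\{x^k\}\subseteq C$, and boundedness follows from compactness of $C$. For $\bar m\in\R$, note that $P=P_1-P_2$ is real-valued and continuous (a difference of finite convex functions) and hence attains its infimum on the compact set $C$.

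For part (ii), the starting point is the strong convexity of the objective in \eqref{eq2}: as recorded after \eqref{subproblem2}, it is strongly convex in $x$ with modulus $\theta_k L_g$, and $(x^{k+1},s^{k+1})$ is its unique minimizer. I would compare this minimizer against the feasible competitor $(x^k,\bar s)$ with $\bar s:=\max_{i}[\ling{i}{x^k}{y^k}]_+$ (feasible since $x^k\in C$, $\bar s\ge \ling{i}{x^k}{y^k}$ and $\bar s\ge 0$); the standard strong-convexity inequality at the minimizer then bounds the subproblem objective at $(x^{k+1},s^{k+1})$ by its value at $(x^k,\bar s)$ minus $\frac{\theta_kL_g}{2}\|x^{k+1}-x^k\|^2$. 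To convert the $P_2$-part into $P$ itself, I would use $\xi^k\in\partial P_2(x^k)$ and the subgradient inequality $P_2(x^{k+1})\ge P_2(x^k)+\langle\xi^k,x^{k+1}-x^k\rangle$; this lower-bounds $P_1(x^{k+1})-\langle\xi^k,x^{k+1}\rangle$ by a quantity containing $P(x^{k+1})$, while $P_1(x^k)-\langle\xi^k,x^k\rangle=P(x^k)+P_2(x^k)-\langle\xi^k,x^k\rangle$ exactly, so the spurious terms $P_2(x^k)-\langle\xi^k,x^k\rangle$ cancel. Dividing by $\theta_k$, subtracting $\bar m/\theta_k$, and using $s^{k+1}=\max_i[\ling{i}{x^{k+1}}{y^k}]_+$ from Lemma \ref{subproremarks}(i), I obtain
\[
\frac{P(x^{k+1})-\bar m}{\theta_k}+s^{k+1}+\frac{L_g}{2}\|x^{k+1}-y^k\|^2+\frac{L_g}{2}\|x^{k+1}-x^k\|^2 \le \frac{P(x^k)-\bar m}{\theta_k}+\bar s+\frac{L_g}{2}\|x^k-y^k\|^2.
\]

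The main obstacle is to control the competitor slack $\bar s$, since it is built from the affine model of $g_i$ at the \emph{current} anchor $y^k$, whereas the step-$k$ potential carries $s^k=\max_i[\ling{i}{x^k}{y^{k-1}}]_+$ anchored at $y^{k-1}$. Here I would exploit the DC splitting $g_i=g_i^1-g_i^2$ from Remark \ref{Remarkg}: convexity of $g_i^1,g_i^2$ supplies affine minorants, while the descent lemma for the $L_{g_i}$- and $\ell_{g_i}$-smooth gradients supplies the matching majorants. Sandwiching at the two anchors yields, on one side, $\ling{i}{x^k}{y^k}\le g_i(x^k)+\frac{\ell_g}{2}\|x^k-y^k\|^2$, and on the other, $g_i(x^k)\le \ling{i}{x^k}{y^{k-1}}+\frac{L_g}{2}\|x^k-y^{k-1}\|^2\le s^k+\frac{L_g}{2}\|x^k-y^{k-1}\|^2$; chaining and taking $\max_i[\cdot]_+$ gives $\bar s\le s^k+\frac{L_g}{2}\|x^k-y^{k-1}\|^2+\frac{\ell_g}{2}\|x^k-y^k\|^2$. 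Substituting this, using the extrapolation identity $\|x^k-y^k\|^2=\beta_k^2\|x^k-x^{k-1}\|^2$, and replacing $\theta_k$ by $\theta_{k+1}\ge\theta_k$ on the left (legitimate since $P(x^{k+1})-\bar m\ge 0$) assembles exactly $Q(x^{k+1},x^k,y^k,\theta_{k+1})$ on the left. On the right the term $\frac{L_g}{2}\|x^k-y^{k-1}\|^2$ matches the corresponding term of $Q(x^k,x^{k-1},y^{k-1},\theta_k)$, and collecting the residual $\|x^k-x^{k-1}\|^2$ coefficients (namely $\frac{(L_g+\ell_g)\beta_k^2}{2}-\frac{L_g}{2}$) produces the stated descent factor $-(1-\frac{L_g+\ell_g}{L_g}\beta_k^2)\frac{L_g}{2}$.

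For part (iii), I would telescope the inequality of (ii) from $k=1$ to $N$. Every term of $Q$ is nonnegative (using $P\ge\bar m$ on $C$ and $\theta_k>0$), so $Q\ge0$ at all iterates, and the summed decrease is bounded by $Q(x^1,x^0,y^0,\theta_1)<\infty$; letting $N\to\infty$ gives $\sum_{k\ge1}\frac{L_g-(L_g+\ell_g)\beta_k^2}{2}\|x^k-x^{k-1}\|^2<\infty$. The choice $\bar\beta=\sup_k\beta_k<\sqrt{L_g/(L_g+\ell_g)}$ forces $L_g-(L_g+\ell_g)\beta_k^2\ge L_g-(L_g+\ell_g)\bar\beta^2>0$, a positive lower bound independent of $k$, so $\sum_k\|x^k-x^{k-1}\|^2<\infty$ and $\|x^k-x^{k-1}\|\to0$. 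Finally $\|x^k-y^k\|=\beta_k\|x^k-x^{k-1}\|\le\bar\beta\|x^k-x^{k-1}\|\to0$, completing the proof.
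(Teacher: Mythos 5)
Your proposal is correct and follows essentially the same route as the paper's proof: the competitor $(x^k,\bar s)$ in the strongly convex subproblem (the paper's \eqref{eq6}), the subgradient inequality for $P_2$ to recover $P$, the DC-splitting sandwich relating the affine models at anchors $y^k$ and $y^{k-1}$ (the paper's \eqref{gnonconvex}), the monotonicity of $\{\theta_k\}$ to pass from $\theta_k$ to $\theta_{k+1}$, and the telescoping of the nonnegative potential $Q$ for part (iii). The only cosmetic difference is that you apply the strong-convexity inequality jointly to \eqref{eq2} (valid since its objective is strongly convex in $x$ and linear in $s$), whereas the paper works with the reduced problem \eqref{subproblem2}; these are equivalent.
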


\begin{proof}
(i): Note that $\{x^k\}\subseteq C$ according to \eqref{eq2}. Since $C$ is compact, $\{x^k\}$ is bounded.

(ii): Notice that the objective in \eqref{subproblem2} is strongly convex with $x^{k+1}$ being its unique minimizer over $C$. Using this, and noting that $s^{k+1} = \max_{i = 1,\cdots,m}\left[\ling{i}{x^{k+1}}{y^k}\right]_+$ (see Lemma~\ref{subproremarks}(i)), we have for any $k\ge 0$ that
\begin{equation}
\begin{split}\label{eq6}
& P_1(x^{k+1}) \!-\! \langle \xi^k,x^{k+1}-x^k\rangle + \theta_k s^{k+1} + \frac{\theta_k L_g}{2}\| x^{k+1} - y^k  \|^2 \\
&= P_1(x^{k+1}) \!-\! \langle \xi^k,x^{k+1}\!-x^k\rangle \!+\! \theta_k \!\max_{i = 1,\cdots,m}\left[\ling{i}{x^{k+1}}{y^k}\right]_+ \!\!\!+\! \frac{\theta_k L_g}{2}\| x^{k+1}\! - y^k  \|^2 \\
&\leq P_1(x^{k}) \!+\! \theta_k \max_{i = 1,\cdots,m}\left[\ling{i}{x^{k}}{y^k}\right]_+ \!+\! \frac{\theta_k L_g}{2}\| x^{k}-y^k \|^2 \!-\! \frac{\theta_k L_g}{2}\| x^{k+1}-x^k \|^2.
\end{split}
\end{equation}
Meanwhile, from Remark~\ref{Remarkg} and the definition of ${\rm lin}_{g_i}$ in \eqref{ling}, we see that whenever $k \ge 1$,
\begin{align}\label{gnonconvex}
&\!\! \max_{i = 1,\cdots,m}\left[\ling{i}{x^{k}}{y^k}\right]_+ \notag\\
&\!\!= \max_{i = 1,\cdots,m}\left[g^1_i(y^k) + \langle\nabla g^1_i(y^k), x^k - y^k \rangle - g^2_i(y^k) - \langle\nabla g^2_i(y^k), x^k - y^k \rangle\right]_+ \notag\\
&\!\!\overset{\rm(a)}{\leq}\!\! \max_{i = 1,\cdots,m}\left[g^1_i(x^{k}) \!-\! g^2_i(x^{k}) \!+\! \frac{\ell_{g_i}}{2}\|x^k \!-\! y^{k}\|^2\right]_+ \!\!\!\!=\!\!\! \max_{i = 1,\cdots,m}\left[g_i(x^{k}) \!+\! \frac{\ell_{g_i}}{2}\|x^k \!-\! y^{k}\|^2\right]_+ \notag\\
&\!\! \overset{\rm(b)}{\leq} \max_{i = 1,\cdots,m}\left[\ling{i}{x^{k}}{y^{k-1}}+ \frac{L_{g_i}}{2}\|x^k - y^{k-1}\|^2 + \frac{\ell_{g_i}}{2}\|x^k - y^{k}\|^2\right]_+ \notag\\
&\!\!\overset{\rm(c)}{\leq} \max_{i = 1,\cdots,m}\left[\ling{i}{x^{k}}{y^{k-1}}\right]_+ + \frac{L_g}{2}\|x^k - y^{k-1}\|^2 + \frac{\ell_{g}}{2}\|x^k - y^{k}\|^2,
\end{align}
where (a) holds because of the convexity of $g^1_i$ and the Lipschitz continuity of $\nabla g^2_i$, (b) follows from the Lipschitz continuity of $\nabla g_i$, and (c) holds because $L_g=\max\{L_{g_i}:\;i=1,\dots,m\}$ and $\ell_g=\max\{\ell_{g_i}:\;i=1,\dots,m\}$.
Then, we obtain that when $k \ge 1$,
\begin{align*}
&P(x^{k+1}) = P_1(x^{k+1}) - P_2(x^{k+1})\overset{\rm(a)}{\leq} P_1(x^{k+1}) - \langle\xi^k, x^{k+1} - x^k\rangle - P_2(x^{k})\\
& = P_1(x^{k+1}) - \langle \xi^k, x^{k+1} - x^k\rangle + \frac{\theta_k L_g}{2}\| x^{k+1} - y^k \|^2 - \frac{\theta_k L_g}{2}\| x^{k+1} - y^k\|^2 - P_2(x^{k})\\
&\overset{\rm(b)}{\leq} P_1(x^{k}) + \frac{\theta_k L_g}{2}\| x^{k}-y^k \|^2 - \theta_k s^{k+1} + \theta_k \max_{i = 1,\cdots,m}\left[\ling{i}{x^{k}}{y^{k}}\right]_+ \\
&~~~~~~ - \frac{\theta_k L_g}{2}\| x^{k+1} - x^k\|^2 - \frac{\theta_k L_g}{2}\| x^{k+1} - y^k\|^2 - P_2(x^{k}) \\
&\overset{\rm(c)}{\leq} P(x^{k}) + \theta_k \left(\max_{i = 1,\cdots,m}\left[\ling{i}{x^{k}}{y^{k-1}}\right]_+
 + \frac{L_g}{2}\| x^k - y^{k-1}\|^2 + \frac{\ell_{g}}{2}\|x^k - y^{k}\|^2\right)\\
  & ~~~~~  + \frac{\theta_k L_g}{2}\| x^{k} - y^k\|^2 - \theta_k s^{k+1} - \frac{\theta_k L_g}{2}\| x^{k+1} - x^k\|^2 - \frac{\theta_k L_g}{2}\| x^{k+1} - y^k\|^2,
\end{align*}
where (a) holds because $P_2$ is convex and $\xi^k\in \partial P_2(x^k)$, (b) holds thanks to \eqref{eq6}, and (c) holds because of \eqref{gnonconvex}.

Rearranging terms in the above display and noting that $y^k - x^k = \beta_k(x^k - x^{k - 1})$ for $k \ge 0$ (thanks to the definition of $y^k$ in \eqref{defyk}), we have that for $k\ge 1$,
\begin{align}\label{eq8}
&P(x^{k+1}) + \theta_k s^{k+1} + \frac{\theta_k L_g}{2}\| x^{k+1} - x^k\|^2 + \frac{\theta_k L_g}{2}\| x^{k+1} - y^k\|^2 \notag\\
&\leq P(x^{k}) + \theta_k\max_{i = 1,\cdots,m}\left[\ling{i}{x^{k}}{y^{k-1}}\right]_+ + \frac{\theta_k L_g}{2}\| x^k - y^{k-1}\|^2 \notag\\
&~~~~ + \frac{\theta_k (L_g + \ell_g)}{2}\beta_k^2\| x^{k} - x^{k-1}\|^2 \notag\\
&= P(x^{k}) + \theta_k\max_{i = 1,\cdots,m}[\ling{i}{x^{k}}{y^{k-1}}]_+ + \frac{\theta_k L_g}{2}\| x^{k} - x^{k-1}\|^2 \notag\\
&~~~~~~ + \frac{\theta_k L_g}{2}\| x^k - y^{k-1}\|^2 - \left(1 - \frac{L_g + \ell_g}{L_g}\beta_k^2\right)\frac{\theta_k L_g}{2}\| x^{k} - x^{k-1}\|^2.
\end{align}

Since $P$ is continuous and $C$ is a nonempty compact set, we see that $\bar{m} = \inf\{P(x):x\in C\} \in \R$. Then we can deduce from the definition of $Q$ and the observation $s^{k+1} = \max_{i = 1,\cdots,m}\left[\ling{i}{x^{k+1}}{y^{k}}\right]_+$ (thanks to Lemma~\ref{subproremarks}(i)) that whenever $k\ge 1$,
\begin{align}\label{eq81}
&Q(x^{k+1},x^{k},y^{k},\theta_{k+1}) \notag\\
&= \frac{P(x^{k+1}) - \bar{m}}{\theta_{k+1}} + s^{k+1} + \frac{L_g}{2}\| x^{k+1} - x^{k} \|^2 + \frac{L_g}{2}\| x^{k+1} - y^{k} \|^2 \notag\\
&\overset{\rm(a)}{\leq} \frac{P(x^{k+1}) - \bar{m}}{\theta_k} + s^{k+1} + \frac{L_g}{2}\| x^{k+1} - x^{k} \|^2 + \frac{L_g}{2}\| x^{k+1} - y^{k} \|^2 \notag\\
&\overset{\rm(b)}{\leq} \frac{1}{\theta_k}\bigg[ P(x^{k}) - \bar{m} + \theta_k \max_{i = 1,\cdots,m}[\ling{i}{x^{k}}{y^{k-1}}]_+ + \frac{\theta_k L_g}{2}\| x^{k} - x^{k-1}\|^2 \notag\\
&~~~~ + \frac{\theta_k L_g}{2}\|x^k - y^{k-1}\|^2- \left(1 - \frac{L_g + \ell_g}{L_g}\beta_k^2\right)\frac{\theta_k L_g}{2}\| x^{k} - x^{k-1}\|^2\bigg] \notag\\
& = Q(x^k,x^{k-1},y^{k-1},\theta_k) - \left(1 - \frac{L_g + \ell_g}{L_g}\beta_k^2\right)\frac{L_g}{2}\| x^{k} - x^{k-1}\|^2,
\end{align}
where (a) holds because of the definition of $\bar{m}$ and the facts that $x^{k+1}\in C$ and $\{\theta_k^{-1}\}$ is nonincreasing, and (b) follows from \eqref{eq8} and the fact that $\frac{1}{\theta_k} > 0$.

(iii): Observe that, for any $k\geq 0$,
\begin{equation*}
\begin{split}
&Q(x^{k+1},x^{k},y^k,\theta_{k+1})  \\
&= \frac{P(x^{k+1}) - \bar{m}}{\theta_{k+1}} \!+\! \max_{i = 1,\cdots,m}[\ling{i}{x^{k+1}}{y^k}]_+ \!+\! \frac{L_g}{2}\| x^{k+1} - x^{k} \|^2 \!+\! \frac{L_g}{2}\| x^{k+1}-y^{k} \|^2 \!\geq\! 0.
\end{split}
\end{equation*}
Combining the above display with item (ii), we have
\begin{align*}
&\sum_{k=1}^{\infty}\left(1 - \frac{L_g + \ell_g}{L_g}\beta_k^2\right)\frac{L_g}{2}\| x^{k} - x^{k-1} \|^2 \\
&\leq Q(x^1,x^{0},y^{0},\theta_1) - \liminf_{k\to \infty} Q(x^{k+1},x^{k},y^k,\theta_{k+1})\leq Q(x^1,x^{0},y^{0},\theta_1) <\infty .
\end{align*}
Finally, since $\sup_k\beta_k < \sqrt{\frac{L_g}{L_g + \ell_g}}$, we can deduce from the above display that
$$\lim_{k\to \infty}\| x^k - x^{k-1}\|=0.$$
Combining this with the definition of $y^k$ in \eqref{defyk}, we can obtain further that
$
\lim_{k\to \infty}\| y^k - x^k\| = \lim_{k\to \infty}\beta_k\| x^k - x^{k-1}\| =0$.
\end{proof}

Next, we recall the following assumption involving the RCQ in Definition~\ref{RCQ}. This assumption was first introduced in \cite[Assumption~(A1)]{Ausleder13} for studying ESQM.

\begin{assumption}\label{A1}
For \eqref{eq1}, the $RCQ(x)$ holds at every $x\in C\cap \mathscr{F}$, and for every $x\in C\setminus \mathscr{F}$, there cannot exist $u_i$, $i\in I(x)$, such that
\begin{equation}\label{A11}
u_i\geq 0 ~~ \forall i\in I(x), ~~ \sum_{i\in I(x)}u_i=1, ~~ \left\langle\sum_{i\in I(x)}u_i\nabla g_i(x), z - x\right\rangle\geq0 ~~\forall z\in C,
\end{equation}
where $I(x) := \Big\{ \iota\in\{1, \dots, m\}: g_{\iota}(x) = \max_{i = 1, \dots, m} [g_i(x)]_+ \Big\}$.\footnote{We would like to point out that while our definition of $I(x)$ seems to look slightly different from the corresponding definition, namely $T(x)$, in \cite{Ausleder13} (see the discussions before \cite[Eq.~(6)]{Ausleder13}), one can check that the two definitions are equivalent.}
\end{assumption}
\begin{remark}\label{remarkRCQ}
\begin{enumerate}[{\rm (i)}]
  \item Using \cite[Remark~2.1]{Ausleder13}, one can deduce that if Assumption~\ref{A1} holds, then for any $x\in C$, there cannot exist $u_i$, $i\in I(x)$, such that \eqref{A11} holds.
  \item From \cite[Remark~2.2]{Ausleder13}, we know that if the $RCQ(x)$ holds at every $x\in C$, then Assumption~\ref{A1} holds.
\end{enumerate}
\end{remark}
Using Assumption~\ref{A1} and Theorem~\ref{suffdec}, we will prove in the next theorem that the sequence $\{\theta_k\}$ in Algorithm~\ref{alg:Framwork} is bounded. The same conclusion was established for ESQM in \cite[Theorem~3.1(b)]{Ausleder13}.
\begin{theorem}[Boundedness of $\{\theta_k\}$]\label{alpha}
Consider \eqref{eq1} and suppose that Assumption~\ref{A1} holds. Let $\{(s^k,\theta_k)\}$ be generated by Algorithm~\ref{alg:Framwork}, $\A := \{k\in \mathbb{N}:\theta_{k+1}>\theta_k\}$, and let $|\A|$ denote the cardinality of $\A$. Then $|\A|$ is finite, i.e., there exists $N_0\in \mathbb{N}$ such that $\theta_k \equiv \theta_{N_0}$ whenever $k\geq N_0$. Moreover, $s^{k+1} = 0$ whenever $k\geq N_0$.
\end{theorem}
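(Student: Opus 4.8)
The plan is to argue by contradiction: suppose $|\A|=\infty$. Since, by Step~3, $\theta_k$ is increased by the fixed amount $d>0$ exactly at the indices in $\A$ and is left unchanged otherwise, an infinite $\A$ forces $\theta_k\to\infty$ (including along the index set $\A$ itself). The key observation to exploit is that $k\in\A$ if and only if $s^{k+1}>0$: by Step~3 and Lemma~\ref{subproremarks}(i), $\theta_{k+1}>\theta_k$ precisely when $\ling{i}{x^{k+1}}{y^k}>0$ for some $i$, i.e.\ when $s^{k+1}=\max_i[\ling{i}{x^{k+1}}{y^k}]_+>0$. The goal is then to extract from $\A$ a limiting multiplier relation at a point $\bar x\in C$ that is ruled out by Assumption~\ref{A1}.

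First I would write down the optimality condition. By Lemma~\ref{subproremarks}(iii), for each $k$ there are $\lambda_i^k\ge 0$, $i\in I_k(x^{k+1})$, with $\sum_{i\in I_k(x^{k+1})}\lambda_i^k=1$ and $0\in\partial P_1(x^{k+1})-\xi^k+\theta_k\sum_{i\in I_k(x^{k+1})}\lambda_i^k\nabla g_i(y^k)+\theta_kL_g(x^{k+1}-y^k)+\mathcal{N}_C(x^{k+1})$. For $k\in\A$ we have $s^{k+1}>0=\ling{0}{x^{k+1}}{y^k}$, hence $0\notin I_k(x^{k+1})$, so $\emptyset\ne I_k(x^{k+1})\subseteq\{1,\dots,m\}$ and $\ling{i}{x^{k+1}}{y^k}=s^{k+1}$ for every $i\in I_k(x^{k+1})$. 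Choosing representatives $\zeta^k\in\partial P_1(x^{k+1})$ and $\eta^k\in\mathcal{N}_C(x^{k+1})$, dividing by $\theta_k$, and using that $\mathcal{N}_C(x^{k+1})$ is a cone, I would rewrite this as
\[
\frac{\zeta^k-\xi^k}{\theta_k}+\sum_{i\in I_k(x^{k+1})}\lambda_i^k\nabla g_i(y^k)+L_g(x^{k+1}-y^k)+\eta^k=0.
\]

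Next I would pass to the limit along $\A$. By Theorem~\ref{suffdec}, $\{x^k\}\subseteq C$ is bounded with $\|x^k-x^{k-1}\|\to0$ and $\|x^k-y^k\|\to0$, so along a subsequence of $\A$ we have $x^{k+1}\to\bar x\in C$, $x^k\to\bar x$ and $y^k\to\bar x$. Since $P_1,P_2$ are finite convex, their subdifferentials are locally bounded, so $\{\zeta^k\},\{\xi^k\}$ are bounded and $\frac{\zeta^k-\xi^k}{\theta_k}\to0$; likewise $L_g(x^{k+1}-y^k)\to0$. Refining the subsequence so that $I_k(x^{k+1})\equiv I$ (only finitely many subsets) and $\lambda_i^k\to\lambda_i$ for $i\in I$ with $\lambda_i\ge0$ and $\sum_{i\in I}\lambda_i=1$, the displayed identity forces $\eta^k\to\eta:=-\sum_{i\in I}\lambda_i\nabla g_i(\bar x)$, and outer semicontinuity of $\mathcal{N}_C$ gives $\eta\in\mathcal{N}_C(\bar x)$, i.e.\ $\langle\sum_{i\in I}\lambda_i\nabla g_i(\bar x),z-\bar x\rangle\ge0$ for all $z\in C$. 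Finally, for $i\in I$, $\ling{i}{x^{k+1}}{y^k}=s^{k+1}\to g_i(\bar x)$, so these limits share a common value $\bar s\ge0$; and since $s^{k+1}\ge\ling{j}{x^{k+1}}{y^k}$ for every $j$, the limit yields $g_j(\bar x)\le\bar s$ for all $j$, whence $\bar s=\max_{j=1,\dots,m}[g_j(\bar x)]_+$ and $I\subseteq I(\bar x)$. Extending $\lambda$ by zero on $I(\bar x)\setminus I$ then gives multipliers $u_i$, $i\in I(\bar x)$, satisfying \eqref{A11} at $\bar x\in C$, contradicting Remark~\ref{remarkRCQ}(i). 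Hence $|\A|<\infty$; taking $N_0>\max\A$, the sequence $\theta_k$ is constant for $k\ge N_0$ and, since then $k\notin\A$, we get $s^{k+1}=0$ for all $k\ge N_0$.

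The main obstacle is the limiting argument in the third paragraph: one must simultaneously stabilize the active-index sets $I_k(x^{k+1})$ along a subsequence, show that every error term vanishes because $\theta_k\to\infty$ while the subgradient selections stay bounded, and then verify that the surviving multiplier inequality is exactly the configuration \eqref{A11} excluded by Assumption~\ref{A1}. The delicate point within this is confirming $I\subseteq I(\bar x)$ (so that the limiting $u_i$ are supported on $I(\bar x)$), which is where the relations $\ling{i}{x^{k+1}}{y^k}=s^{k+1}$ for active $i$ and $s^{k+1}\ge\ling{j}{x^{k+1}}{y^k}$ for all $j$ are used together with the vanishing of $\|x^k-y^k\|$.
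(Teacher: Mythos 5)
Your proposal is correct and follows essentially the same route as the paper: assume $|\A|=\infty$ so that $\theta_k\to\infty$, invoke the scaled optimality condition from Lemma~\ref{subproremarks}(iii), stabilize the active index sets and multipliers along a subsequence, pass to the limit using Theorem~\ref{suffdec}(iii), boundedness of the subgradients and closedness of $\mathcal{N}_C$, and contradict Assumption~\ref{A1} via Remark~\ref{remarkRCQ}(i). The only difference is organizational: you extract the violating subsequence directly from $\A$ itself (via the equivalence $k\in\A \Leftrightarrow s^{k+1}>0$), whereas the paper first establishes the intermediate claim that each individual linearized constraint is eventually satisfied and then obtains a second contradiction with $\theta_k\to\infty$; your single-layer contradiction is a mild streamlining of the same argument.
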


\begin{proof}
Suppose to the contrary that $|\A| = \infty$. Then by the definition of $\theta_k$ in Step~3 of Algorithm~\ref{alg:Framwork}, we have $\lim_{k\to \infty}\theta_k = \infty$ and $\lim_{k\to \infty}\theta_k^{-1}=0$.
	
We first claim that for each $i$, there exists $n_i\in \mathbb{N}$ such that for all $k\geq n_i$,
\begin{equation*}
 g_i(y^k) + \langle \nabla g_i(y^k),x^{k+1} - y^k \rangle\leq 0,
\end{equation*}
where $\{(x^k,y^k)\}$ is generated by Algorithm~\ref{alg:Framwork}.

Suppose not. Then there exists $i_0\in\{1,\dots, m\}$ and (infinite) subsequences $\{x^{k_j}\}$ and $\{y^{k_j}\}$ such that
\begin{equation*}
 g_{i_0}(y^{k_j}) + \langle \nabla g_{i_0}(y^{k_j}),x^{{k_j}+1} - y^{k_j} \rangle > 0~~~~ \forall j.
\end{equation*}
Using this and recalling the definition of $I_k(\cdot)$ in \eqref{defiIk}, we have that 
\begin{equation*}
 \ling{i}{x^{k_j+1}}{y^{k_j}} > 0~~~ \forall i\in I_{k_j}(x^{k_j+1}),~ \forall j.
\end{equation*}
In particular, $0 \notin  I_{k_j}(x^{k_j+1})$ (see \eqref{ling}). Now, in view of the finiteness of $\left\{I_{k_j}(x^{k_j+1})\right\}$ (since $I_{k_j}(x^{k_j+1})\subseteq \{1, \dots, m\}$ for all $j$), by passing to a further subsequence if necessary, we deduce that there exists a nonempty subset $I_0\subseteq \{1,\dots,m\}$ such that
$I_{k_j}(x^{k_j+1})\equiv I_0$ for all $j$. That is, for all $i\in I_0$,
\begin{equation}\label{eq10}
\ling{i}{x^{k_j+1}}{y^{k_j}} = \max_{l = 0, 1, \dots, m} \left\{ \ling{l}{x^{k_j+1}}{y^{k_j}}\right\} > 0~~ \forall j.
\end{equation}
In addition, from Lemma~\ref{subproremarks}(iii), we have that for each $k_j$, there exist $\lambda_i^{k_j} \geq 0$ for each $i\in I_{k_j}(x^{k_j + 1}) \equiv I_0$, such that $\sum_{i\in I_0}\lambda_i^{k_j} = 1$ and
\begin{align}\label{eq11}
0\in \theta_{k_j}^{-1}(\partial P_1(x^{k_j+1}) - \xi^{k_j}) + L_g(x^{k_j+1} - y^{k_j}) \!+\! \sum_{i\in I_0} \lambda_i^{k_j} \nabla g_i(y^{k_j}) \!+\! \mathcal{N}_C(x^{k_j+1}).
\end{align}
Now, since the sequences $\{x^k\}\subseteq C$ and $\{\lambda_i^{k_j}\}$ (for each $i \in I_0$) are bounded, by passing to a further subsequence if necessary, we assume that $\lim_{j \to \infty} x^{k_j} = x^{*}$ for some $x^*$ and that for each $i\in I_0$, $\lim_{j \to \infty}\lambda_i^{k_j}= \bar{\lambda}_i$ for some $\bar \lambda_i$. Then $x^*\in C$, $\bar{\lambda}_i \ge 0$ (for each $i\in I_0$), $\sum_{i\in I_0} \bar{\lambda}_i = 1$ and $I_0 \subseteq \left\{\iota \in\{0,1,\cdots,m\}: g_{\iota}(x^*) = \max_{i=0,1,\cdots,m} g_i(x^*)\right\}$ (thanks to \eqref{eq10}, \eqref{ling} and Theorem~\ref{suffdec}(iii), and recall that $g_0 \equiv 0$ from Lemma~\ref{subproremarks}(iii)). Since $0\notin I_0$, we see that
\[
I_0 \subseteq I(x^*) =\left\{\iota \in\{1,\cdots,m\}: g_{\iota}(x^*) = \max_{i=1,\cdots,m} [g_i(x^*)]_+\right\},
\]
where $I(x)$ was defined in Assumption~\ref{A1}.
Passing to the limit in \eqref{eq11}, and noting that $\lim_{j \to \infty} \theta_{k_j}^{-1} = 0$, $\lim_{k \to \infty}\| x^{k+1} - y^{k} \|=\lim_{k \to \infty}\| x^{k+1} - x^{k} \|=0$ (thanks to Theorem~\ref{suffdec}(iii)) and the fact that $\{\partial P_1(x^{k_j+1})\}$ and $\{\xi^{k_j}\}$ are uniformly bounded (thanks to the real-valuedness and convexity of $P_1$, $P_2$, the compactness of $C$ and \cite[Theorem~24.7]{Ro70}), we have upon invoking the closedness of $x\mapsto {\cal N}_C(x)$ that
$$0\in \sum_{i\in I_0}\bar{\lambda}_i \nabla g_i(x^*) + \mathcal{N}_C(x^*),$$
which implies that
\begin{align*}
\left\langle \sum_{i\in I_0}\bar{\lambda}_i \nabla g_i(x^*), x-x^* \right\rangle \geq 0 ~~~~ \forall x\in C.
\end{align*}
Since $I_0\subseteq I(x^*)$, this contradicts Assumption~\ref{A1} in view of Remark~\ref{remarkRCQ}(i).

Therefore, if $|\A| = \infty$, then it must hold that for each $i$, there exists $n_i\in \mathbb{N}$, such that for any $k\geq n_i$,
\[
g_i(y^k) + \langle \nabla g_i(y^k),x^{k+1} - y^k \rangle\leq 0.
\]
Let $N_* :=  \max_{i = 1, \dots, m} n_i$. Then for all $i\in\{1, \dots, m\}$ and for any $k\ge N_*$, we have
\[
g_i(y^k) + \langle \nabla g_i(y^k),x^{k+1} - y^k \rangle\leq 0.
\]
In view of this and the definition of $\theta_k$ in Step 3 of Algorithm \ref{alg:Framwork}, we must have $\theta_k\equiv \theta_{N_*}$ for all $k\geq N_*$, which contradicts $\theta_k\rightarrow\infty$. Thus, it must hold that $|\A|< \infty$.

Since $|\A|$ is finite, there exists $N_0\in \mathbb{N}$, such that $\theta_k\equiv\theta_{N_0}$ whenever $k\geq N_0$. From Step 3 of Algorithm \ref{alg:Framwork}, we known that for each $i$, $ g_i(y^k) + \langle \nabla g_i(y^k), x^{k+1} - y^k \rangle\leq 0$, for all $k\geq N_0$. Then Lemma~\ref{subproremarks}(i) asserts that $s^{k+1}=0$ for any $k\geq N_0$.
\end{proof}

We are now ready to prove that any cluster point of the $\{x^k\}$ generated by Algorithm~\ref{alg:Framwork} is a critical point of \eqref{eq1}.
\begin{theorem}[Subsequential convergence]\label{subconver}
Consider \eqref{eq1} and suppose that Assumption \ref{A1} holds. Let $\{x^k\}$ be generated by Algorithm \ref{alg:Framwork}. Then for any accumulation point $\bar x$ of $\{x^k\}$, there exists $\bar{\lambda}_i\geq 0$ for each $i\in \tilde{I}(\bar{x})$ such that $\sum_{i\in \tilde{I}(\bar{x})} \bar{\lambda}_i = 1$ and
\begin{equation}\label{critical3333}
        0\in \partial P_1(\bar{x}) - \partial P_2(\bar x) + \theta_{N_0}\sum_{i\in \tilde{I}(\bar{x})} \bar{\lambda}_i\nabla g_i(\bar{x}) + \mathcal{N}_C(\bar{x}),
\end{equation}
where $\tilde{I}(\bar{x}):=\left\{\iota \in\{0,1,\cdots,m\}: g_{\iota}(\bar{x}) = \max_{i=0,1,\cdots,m} \{g_i(\bar x)\}\right\}$, $g_0 = 0$, and $\theta_{N_0}$ is defined in Theorem~\ref{alpha}; moreover, $\bar x$ is a critical point of \eqref{eq1}.
\end{theorem}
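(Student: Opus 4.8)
The plan is to pass to the limit in the first-order optimality condition of the subproblem (Lemma~\ref{subproremarks}(iii)) along a convergent subsequence, after first using Theorem~\ref{alpha} to freeze $\theta_k$ and to annihilate $s^{k+1}$. First I would invoke Theorem~\ref{alpha}: under Assumption~\ref{A1} there is $N_0$ with $\theta_k \equiv \theta_{N_0}$ and $s^{k+1} = 0$ for $k \ge N_0$, and by Lemma~\ref{subproremarks}(i) the latter yields $\ling{i}{x^{k+1}}{y^k} \le 0$ for all $i$ and all $k \ge N_0$. Given an accumulation point $\bar x$, say $x^{k_j} \to \bar x$ with $k_j > N_0$, Theorem~\ref{suffdec}(iii) gives $\|x^k - x^{k-1}\| \to 0$ and $\|x^k - y^k\| \to 0$, hence $x^{k_j-1} \to \bar x$, $y^{k_j-1} \to \bar x$ and $\|x^{k_j} - y^{k_j-1}\| \to 0$. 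I would read $x^{k_j}$ as the unique solution of the subproblem indexed by $k = k_j - 1$ and apply Lemma~\ref{subproremarks}(iii) there.

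Next I would handle the index-dependent active set. Since $I_{k_j-1}(x^{k_j}) \subseteq \{0,1,\ldots,m\}$ takes only finitely many values, I would pass to a subsequence on which it equals a fixed set $\hat I$. Along it, the multipliers $\lambda_i^{k_j-1}\in[0,1]$, the selected subgradients in $\partial P_1(x^{k_j})$, and $\xi^{k_j-1}\in\partial P_2(x^{k_j-1})$ are all bounded; the subgradient boundedness follows from the real-valuedness and convexity of $P_1,P_2$ and the compactness of $C$ via \cite[Theorem~24.7]{Ro70}, exactly as in the proof of Theorem~\ref{alpha}. Passing to further subsequences, these converge to $\bar\lambda_i$ ($i\in\hat I$) with $\bar\lambda_i\ge0$ and $\sum_{i\in\hat I}\bar\lambda_i=1$, and to some $\bar\zeta,\bar\xi$; by outer semicontinuity (closed graph) of the convex subdifferentials, $\bar\zeta\in\partial P_1(\bar x)$ and $\bar\xi\in\partial P_2(\bar x)$. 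Using continuity of each $\nabla g_i$, the fact that $\theta_{N_0}L_g\|x^{k_j}-y^{k_j-1}\|\to0$, and closedness of the graph of $x\mapsto\mathcal{N}_C(x)$, the limit of the optimality condition reads $0\in\partial P_1(\bar x)-\partial P_2(\bar x)+\theta_{N_0}\sum_{i\in\hat I}\bar\lambda_i\nabla g_i(\bar x)+\mathcal{N}_C(\bar x)$. To identify $\hat I$, I would note that for $i\in\hat I$ continuity gives $\ling{i}{x^{k_j}}{y^{k_j-1}}\to g_i(\bar x)$ and $\max_{l}\ling{l}{x^{k_j}}{y^{k_j-1}}\to\max_{l=0,\ldots,m}\{g_l(\bar x)\}$, whence $\hat I\subseteq\tilde I(\bar x)$; setting $\bar\lambda_i:=0$ for $i\in\tilde I(\bar x)\setminus\hat I$ then yields \eqref{critical3333}.

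Finally I would establish criticality. Passing to the limit in $\ling{i}{x^{k_j}}{y^{k_j-1}}\le0$ gives $g_i(\bar x)\le0$ for all $i$, and $\bar x\in C$ by closedness of $C$; hence $\bar x\in C\cap\mathscr{F}$ and $\max_{l=0,\ldots,m}\{g_l(\bar x)\}=0$. Defining $\lambda_i:=\theta_{N_0}\bar\lambda_i$ for $i\in\tilde I(\bar x)\cap\{1,\ldots,m\}$ and $\lambda_i:=0$ otherwise, these are nonnegative; since $g_0\equiv0$ has $\nabla g_0=0$, the $i=0$ term drops out and \eqref{critical3333} becomes condition (iii) of Definition~\ref{Stationary}. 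Complementarity $\lambda_i g_i(\bar x)=0$ holds because any $i$ with $\lambda_i>0$ lies in $\tilde I(\bar x)$ with $i\ge1$, forcing $g_i(\bar x)=0$; together with $g_i(\bar x)\le0$ this verifies Definition~\ref{Stationary}, so $\bar x$ is a critical point.

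I expect the main obstacle to be the limit-passage in the subdifferential inclusion. It hinges on two technical points: (a) reducing the iterate-dependent active set $I_{k_j-1}(x^{k_j})$ to a constant $\hat I$ by a subsequence argument and then identifying $\hat I\subseteq\tilde I(\bar x)$ through continuity of $\ling{i}{\cdot}{\cdot}$ and of the finite maximum; and (b) the uniform boundedness together with outer semicontinuity of $\partial P_1$, $\partial P_2$ and $\mathcal{N}_C$, which is what allows the chosen selections to converge to admissible limiting subgradients and normal vectors.
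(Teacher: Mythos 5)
Your proposal is correct and follows essentially the same route as the paper's proof: both freeze $\theta_k$ and annihilate $s^{k+1}$ via Theorem~\ref{alpha}, pass to a subsequence along which the active set $I_k(\cdot)$ is constant, take limits in the optimality condition of Lemma~\ref{subproremarks}(iii) using boundedness and outer semicontinuity of $\partial P_1$, $\partial P_2$ and $\mathcal{N}_C$, identify the limiting active set inside $\tilde I(\bar x)$, and finally rescale the multipliers by $\theta_{N_0}$ to verify complementarity and criticality. The only (immaterial) difference is an index shift: you read $x^{k_j}$ as the solution of the subproblem indexed $k_j-1$, whereas the paper reads $x^{k_j+1}$ as the solution of the subproblem indexed $k_j$ and invokes Theorem~\ref{suffdec}(iii) to conclude $x^{k_j+1}\to\bar x$.
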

\begin{proof}
Suppose that $\bar{x}$ is an accumulation point of $\{x^k\}$ with $\lim_{j\to \infty} x^{k_j} = \bar{x}$ for some convergent subsequence $\{x^{k_j}\}$. Let $\{\xi^k\}$ be generated in Algorithm~\ref{alg:Framwork} and $\{\lambda^k_i\}$ with $i\in I_k(x^{k+1})$ be as in Lemma~\ref{subproremarks}(iii). Then, in view of the finiteness of $\{I_{k_j}(x^{k_j+1})\}$ (since $I_{k_j}(x^{k_j+1})\subseteq \{0, 1, \dots, m\}$ for all $j$), by passing to a further subsequence if necessary, we see that there exists a nonempty subset $I_0\subseteq \{0, 1,\dots,m\}$ such that $I_{k_j}(x^{k_j+1})\equiv I_0$.
Moreover, $\{\lambda^{k_j}_i\}$ for each $i \in I_{k_j}(x^{k_j+1})\equiv I_0$ is bounded as sequences of nonnegative numbers at most $1$, and $\{\xi^k\}$ is bounded thanks to the real-valuedness and convexity of $P_2$ and \cite[Theorem~24.7]{Ro70}. Passing to a further subsequence if necessary, we assume without loss of generality that $\lim_{j\to \infty} \lambda_i^{k_j} = \bar{\lambda}_i\geq 0$ for each $i\in I_0$ and $\lim_{j\to \infty} \xi^{k_j} = \bar{\xi}$; moreover, the property of $\{\lambda^{k_j}_i\}$ with $i\in I_{k_j}(x^{k_j+1})\equiv I_0$ guaranteed by Lemma~\ref{subproremarks}(iii) asserts that for all $j$, it holds that
\begin{equation}\label{inclusion}
\begin{aligned}
&0\in \partial P_1(x^{k_j+1}) \!-\! \xi^{k_j} \!+\! \theta_{k_j}L_g(x^{{k_j}+1} - y^{k_j}) \!+\! \theta_{k_j}\! \sum_{i\in I_0} \lambda_i^{k_j}\nabla g_i(y^{k_j}) \!+\! \mathcal{N}_C(x^{{k_j}+1})\\
&{\rm and}\ \ \sum_{i\in I_0}\lambda^{k_j}_i = 1,\ \ \ \lambda^{k_j}_{i} \ge 0\ \ \forall i \in I_{k_j}(x^{k_j+1})\equiv I_0.
\end{aligned}
\end{equation}
In addition, in view of \eqref{eq2}, we obtain that for each $j$,
\begin{equation}\label{giineq}
g_i(y^{k_j}) + \langle\nabla g_i(y^{k_j}), x^{{k_j}+1} - y^{k_j} \rangle \leq s^{{k_j}+1} ~~~~ \forall i =1,\dots,m.
\end{equation}

Now, note that $\lim_{k\to \infty} \| x^{k} - x^{k-1} \|=\lim_{k\to \infty} \| x^{k+1} - y^{k} \|=0$ (thanks to Theorem~\ref{suffdec}(iii)), $s^{k_j+1} = 0$ and $\theta_{k_j} \equiv \theta_{N_0}$ whenever $k_j\geq N_0$ (thanks to Theorem~\ref{alpha}). Passing to the limit in \eqref{giineq} and \eqref{inclusion}, we see that
\begin{equation}\label{critical1}
g_i(\bar{x})\le 0 ~~ \forall i=1,\dots,m,\ \ \sum_{i\in I_0}\bar \lambda_i = 1,\ \ \ \bar\lambda_i \ge 0 \ \ \forall i\in I_0,
\end{equation}
and
\begin{equation}\label{critical3}
0\in \partial P_1(\bar{x}) - \partial P_2(\bar x) + \theta_{N_0}\sum_{i\in I_0} \bar{\lambda}_i\nabla g_i(\bar{x}) + \mathcal{N}_C(\bar{x}).
\end{equation}
where we also invoked the closedness of $\partial P_1$, $\partial P_2$ and $\mathcal{N}_C$ to deduce \eqref{critical3}. Furthermore, we have from the definition of $I_{k_j}(x^{k_j + 1})$ in \eqref{defiIk} (and recall that $I_{k_j}(x^{k_j + 1})\equiv I_0$) and Theorem~\ref{suffdec}(iii) that
\begin{equation}\label{I0}
 I_0\subseteq \tilde{I}(\bar{x}):=\left\{\iota \in\{0,1,\cdots,m\}: g_{\iota}(\bar{x}) = \max_{i=0,1,\cdots,m} \{g_i(\bar{x})\}\right\}.
\end{equation}
Then the inclusion \eqref{critical3333} follows from \eqref{critical3} and \eqref{critical1} upon noting $I_0\subseteq \tilde{I}(\bar{x})$ (see \eqref{I0}) and defining $\bar \lambda_i = 0$ for $i\in \tilde I(\bar x)\setminus I_0$.

Finally, let $\hat{\lambda}_i := \theta_{N_0}\bar{\lambda}_i\geq 0$ for all $i\in I_0\cap\{1,\cdots,m\}$, and $\hat{\lambda}_i = 0$ for all $i\in \{1,\cdots,m\}\setminus I_0$. Then by \eqref{critical1} and $I_0\subseteq \tilde{I}(\bar{x})$ (see \eqref{I0}), we have that
\begin{equation}\label{critical2}
\hat{\lambda}_i g_i(\bar{x}) = 0 ~~ \forall i=1,\dots,m;
\end{equation}
indeed, for each $i\in I_0$, we have $g_i(\bar{x}) = 0$, and for each $i\notin I_0$, we have $\hat{\lambda}_i = 0$.

Notice that $\nabla g_0 (\bar{x}) = 0$ (thanks to $g_0 \equiv 0$). Using the definition of $\hat{\lambda}_i$ and \eqref{critical3}, we have
\begin{equation}\label{critical33}
\begin{aligned}
0&\in \partial P_1(\bar{x}) - \partial P_2(\bar{x}) + \sum_{i=1}^m \hat{\lambda}_i\nabla g_i(\bar{x}) + \mathcal{N}_C(\bar{x}).
\end{aligned}
\end{equation}
Combining \eqref{critical1}, \eqref{critical2}, \eqref{critical33} and the above definition of $\hat\lambda$, we conclude that $\bar{x}$ is a critical point of \eqref{eq1}.
\end{proof}


We next derive the global convergence property of the $\{x^k\}$ generated by Algorithm~\ref{alg:Framwork}. We will need to make use of the following function,
\begin{equation}\label{defH}
H(x,y,z) \!:=\! \frac{P(x) - \bar{m}}{\hat{\theta}} +\! \max_{i = 1,\ldots,m}[\ling{i}{x}{z}]_+\! + \frac{L_g}{2}\| x-y \|^2 + \frac{L_g}{2}\| x - z \|^2 + \delta_{C}(x),
\end{equation}
where $\bar{m}$ is defined in Theorem~\ref{suffdec}(ii), and $\hat{\theta}:= \theta_{N_0}$ with $N_0$ defined in Theorem~\ref{alpha}. Our analysis follows the nowadays standard convergence arguments based on Kurdyka-{\L}ojasiewicz property; see, for example, \cite{attouch10,attouch13,bolte14}. In essence, under Assumption~\ref{A1}, we will show that $H$ has sufficient descent along the sequence $\{(x^{k+1},x^{k},y^k)\}$ for all sufficiently large $k$, and $H$ is constant on the set of accumulation points of $\{(x^{k+1},x^{k},y^k)\}$. We will also show that $ \d(0,\partial H(x^{k+1},x^{k},y^k))$ is suitably bounded by successive changes of the iterates by imposing additional differentiability assumptions on each $g_i$ and $P_2$. These together with an additional assumption that $H$ satisfies the KL property will be used to establish global convergence of the $\{x^k\}$ generated by Algorithm~\ref{alg:Framwork}.

We start with a remark concerning the sufficient descent property.
\begin{remark}[Sufficient descent]\label{rebarh}
Consider \eqref{eq1} and suppose that Assumption~\ref{A1} holds. Notice from the definition of $Q$ in Theorem~\ref{suffdec}{\rm (ii)} and that of $H$ in \eqref{defH} that $H(x,y,z) = Q(x,y,z,\hat\theta) + \delta_C(x)$. Now, according to Theorem~\ref{alpha}, we have $\theta_k \equiv \theta_{N_0} = \hat\theta$ for all $k\geq N_0$. Thus, we have $H(x^{k},x^{k-1},y^{k-1}) = Q(x^{k},x^{k-1},y^{k-1},\hat{\theta})$ for all $k\geq N_0$, where $\{(x^k,y^k)\}$ is generated by Algorithm \ref{alg:Framwork}. Then one can see that the sequence $\{H(x^{k+1},x^{k},y^k)\}_{k\ge N_0}$ is nonincreasing thanks to Theorem~\ref{suffdec}{\rm (ii)}, and it holds that
\[
H(x^{k+1},x^{k},y^{k}) \leq H(x^{k},x^{k-1},y^{k-1}) - \frac{L_g - (L_g + \ell_g)\bar{\beta}^2}{2}\| x^{k} - x^{k-1}\|^2\ \ \ \ \forall k\ge N_0,
\]
where $\bar{\beta} = \sup_k\beta_k$, and notice that $L_g > (L_g + \ell_g)\bar\beta^2$ thanks to the choice of $\{\beta_k\}$.
\end{remark}

\begin{lemma}\label{lemma1}
Consider \eqref{eq1} and suppose that Assumption~\ref{A1} holds. Let $\{(x^k,y^k)\}$ be generated by Algorithm \ref{alg:Framwork}, $H$ be defined in \eqref{defH}, and $\Omega$ be the set of accumulation points of $\{(x^{k+1},x^{k},y^{k})\}$. Then $\Omega$ is a nonempty compact set, $\omega := \lim_{k\to \infty} H(x^{k+1},x^{k}, y^{k})$ exists, and $H \equiv \omega$ on $\Omega$.
\end{lemma}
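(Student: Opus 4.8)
The plan is to establish the three assertions in turn, drawing on the vanishing-successive-changes conclusions of Theorem~\ref{suffdec} and the sufficient-descent estimate recorded in Remark~\ref{rebarh}.

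First I would settle nonemptiness and compactness of $\Omega$ by showing that the sequence $\{(x^{k+1},x^k,y^k)\}$ is bounded. Indeed, $\{x^k\}\subseteq C$ and $C$ is compact by Theorem~\ref{suffdec}(i), while $y^k=x^k+\beta_k(x^k-x^{k-1})$ with $\{\beta_k\}$ bounded and $\|x^k-x^{k-1}\|\to 0$ by Theorem~\ref{suffdec}(iii), so $\{y^k\}$ is bounded as well. A bounded sequence in $\mathbb{R}^{3n}$ has at least one cluster point, giving $\Omega\neq\emptyset$; and expressing $\Omega=\bigcap_{q}\overline{\{(x^{k+1},x^k,y^k):k\geq q\}}$ as a nested intersection of nonempty compact sets (each a closed subset of a fixed bounded set) shows that $\Omega$ is compact. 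I would also note that $\|x^{k+1}-x^k\|\to 0$ and $\|x^k-y^k\|\to 0$ force every element of $\Omega$ to have the form $(\bar x,\bar x,\bar x)$, though this is not strictly needed below.

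Next, for the existence of $\omega$, I would observe that $\{H(x^{k+1},x^k,y^k)\}_{k\ge N_0}$ is nonincreasing by Remark~\ref{rebarh} and bounded below by $0$: for every $k$ one has $x^{k+1}\in C$, so $\delta_C(x^{k+1})=0$, while $P(x^{k+1})-\bar m\ge 0$ by the definition of $\bar m$ in Theorem~\ref{suffdec}(ii), and the max-term and the two quadratic terms in \eqref{defH} are manifestly nonnegative. A nonincreasing sequence bounded below converges, so $\omega:=\lim_k H(x^{k+1},x^k,y^k)$ exists and is finite.

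Finally, to prove $H\equiv\omega$ on $\Omega$, I would split $H=\Phi+\delta_C(\cdot)$, where $\Phi$ collects the $P$-term, the max-term $\max_{i}[\ling{i}{x}{z}]_+$, and the two quadratics. The function $\Phi$ is continuous, since $P$, each $g_i$ and each $\nabla g_i$ are continuous and both $[\cdot]_+$ and $\max$ preserve continuity. Fixing $(\bar x,\bar y,\bar z)\in\Omega$ with a subsequence $(x^{k_j+1},x^{k_j},y^{k_j})\to(\bar x,\bar y,\bar z)$, continuity of $\Phi$ yields $\Phi(x^{k_j+1},x^{k_j},y^{k_j})\to\Phi(\bar x,\bar y,\bar z)$; moreover $\delta_C(x^{k_j+1})=0$ for all $j$ and, since $C$ is closed, $\bar x\in C$ so $\delta_C(\bar x)=0$. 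Hence $H(x^{k_j+1},x^{k_j},y^{k_j})\to H(\bar x,\bar y,\bar z)$. But this subsequence also tends to $\omega$ by the previous step, so $H(\bar x,\bar y,\bar z)=\omega$. The main obstacle is precisely this limit passage: $H$ is only lower semicontinuous because of the term $\delta_C$, so one cannot invoke continuity of $H$ directly. The resolution is to peel off the indicator and exploit that every iterate $x^{k+1}$ lies in the \emph{closed} set $C$, which makes $\delta_C$ vanish both along the sequence and at the limit, leaving the continuous part $\Phi$ to carry the argument.
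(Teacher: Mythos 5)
Your proof is correct and follows essentially the same route as the paper: compactness of $\Omega$ via Theorem~\ref{suffdec}, existence of $\omega$ from the monotonicity in Remark~\ref{rebarh} plus nonnegativity of $H$ along the iterates, and the final identity by passing to the limit in the continuous part of $H$ while the indicator $\delta_C$ vanishes both along the sequence and at the limit. The only cosmetic difference is that the paper first identifies $\Omega$ as the diagonal set $\{(\bar x,\bar x,\bar x):\bar x\in\varLambda\}$ and evaluates $H$ there, whereas you work with an arbitrary cluster point and split off $\delta_C$ explicitly; both are equally valid.
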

\begin{proof}
From Theorem~\ref{suffdec}(i), we have that the set of accumulation points of $\{x^k\}$, denoted by $\varLambda$, is a nonempty compact set. Since $\lim_{k\to \infty} \| x^k - x^{k-1} \|=\lim_{k\to \infty} \| x^k - y^k\| = 0$ thanks to Theorem~\ref{suffdec}(iii), one can see that $\Omega= \{(\bar{x},\bar{x},\bar{x}): \bar{x}\in \varLambda\}$, which is a nonempty compact set.

Next, according to Remark~\ref{rebarh}, the sequence $\{H(x^{k+1},x^{k}, y^{k})\}_{k\ge N_0}$ is nonincreasing. Moreover, one can see from the definition of $H$ (see \eqref{defH}) that $\{H(x^{k+1},x^{k}, y^{k})\}$ is bounded from below (by zero). Thus, $\omega := \lim_{k\to \infty} H(x^{k+1},x^{k}, y^{k})$ exists.

For any $(\bar{x},\bar{x},\bar{x})\in \Omega$, let $\{x^{k_j}\}$ be a convergent subsequence with $\lim_{j\to \infty} x^{k_j} = \bar{x}$.
Since $P$ and each $g_i$ are continuous, and $\lim_{k\to \infty} \| x^k - x^{k-1}\| = \lim_{k\to \infty} \| x^k - y^{k}\| = 0$ (see Theorem~\ref{suffdec}(iii)), we obtain that
\begin{align*}
& H(\bar{x}, \bar{x}, \bar{x}) = \frac{P(\bar{x}) - \bar{m}}{\hat{\theta}} + \max_{i = 1,\cdots,m}\left[\ling{i}{\bar x}{\bar x}\right]_+\\
&\!\!=\!\lim_{j\to \infty}\! \frac{P(x^{k_j+1}) \!-\! \bar{m}}{\hat{\theta}} \!+\!\!\max_{i = 1,\cdots,m}\!\left[\ling{i }{x^{k_j+1}}{y^{k_j}}\right]_+ \!\!\!+\! \frac{L_g}{2}\| x^{k_j+1} \!\!-\! x^{k_j} \|^2 \!+\! \frac{L_g}{2}\| x^{k_j+1} \!\!-\! y^{k_j} \|^2 \\
&\!\!=\lim_{j\to \infty} H(x^{k_j+1},x^{k_j},y^{k_j}) = \lim_{k\to \infty} H(x^{k+1},x^{k}, y^{k}) = \omega.
\end{align*}
Since $(\bar{x}, \bar{x}, \bar{x})\in \Omega$ is arbitrary, we conclude that $H \equiv \omega$ on $\Omega$.
\end{proof}

Next, we introduce an assumption for deriving a bound on $\d(0,\partial H (x^{k+1}, x^k, y^k))$. This assumption was also used in \cite{wen18,yu21} and is satisfied in many applications; see \cite{wen18}.
\begin{assumption}\label{A2}
Each $g_i$ in \eqref{eq1} is twice continuously differentiable. The function $P_2$ is continuously differentiable on an open set $U_0$ containing $\mathcal{X}$, and $\nabla P_2$ is locally Lipschitz continuous on $U_0$, where $\mathcal{X}$ is the set of critical points of \eqref{eq1}.
\end{assumption}

Now, we present the following bound on $\d(0,\partial H(x^{k+1},x^{k},y^{k}))$.
\begin{lemma}\label{th2.1}
Consider \eqref{eq1} and suppose that Assumptions~\ref{A1} and \ref{A2} hold. Let $\{(x^k,y^k)\}$ be generated by Algorithm \ref{alg:Framwork} and $H$ be defined in \eqref{defH}. Then there exist $\tau > 0$ and $N_1\in \mathbb{N}$ such that for all $k \ge N_1$, we have
\[
\d (0,\partial H(x^{k+1},x^{k},y^{k}))\le \tau (\|x^{k+1} - x^{k}\| + \|x^{k} - x^{k-1}\|).
\]
\end{lemma}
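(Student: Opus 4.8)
The plan is to exhibit, for all large $k$, an explicit element of $\partial H(x^{k+1},x^k,y^k)$ whose norm is controlled by $\|x^{k+1}-x^k\|+\|x^k-x^{k-1}\|$, and then bound $\d(0,\partial H(x^{k+1},x^k,y^k))$ by the norm of this element. First I would decompose $H$ from \eqref{defH}, viewed as a function on $\R^{3n}$ in the variables $(x,y,z)$, as the sum of the convex term $\frac1{\hat\theta}P_1+\delta_C$ (in $x$), the pointwise-maximum term $\Psi(x,z):=\max_{i=0,1,\ldots,m}\ling{i}{x}{z}$ (absorbing $[\,\cdot\,]_+$ via $g_0\equiv 0$ and $\ling{0}{x}{z}=0$, see \eqref{ling}), and the $C^1$ terms $-\frac1{\hat\theta}P_2$, $\frac{L_g}{2}\|x-y\|^2$ and $\frac{L_g}{2}\|x-z\|^2$. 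Under Assumption~\ref{A2} each $g_i$ is twice continuously differentiable, so each $\ling{i}{\cdot}{\cdot}$ is $C^1$ jointly in $(x,z)$ with $\nabla_x\ling{i}{x}{z}=\nabla g_i(z)$ and $\nabla_z\ling{i}{x}{z}=\nabla^2 g_i(z)(x-z)$; hence $\Psi$ is (subdifferentially) regular, and by the maximum rule \cite[Exercise~8.31]{rock97a} together with \eqref{defiIk}, the set $\partial\Psi(x^{k+1},y^k)$ equals the convex hull of $\{(\nabla g_i(y^k),\,\nabla^2 g_i(y^k)(x^{k+1}-y^k)):\ i\in I_k(x^{k+1})\}$. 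Peeling off the $C^1$ terms via the exact smooth sum rule \cite[Exercise~8.8]{rock97a} (valid once $x^{k+1}$ lies in the set $U_0$ where $P_2$ is differentiable, guaranteed below for large $k$) and using that a sum of regular subgradients of the three regular summands is a regular, hence limiting, subgradient of their sum, any convex weights $\lambda_i^k$ over $I_k(x^{k+1})$, any $\zeta^{k+1}\in\partial P_1(x^{k+1})$ and any $\nu^{k+1}\in\mathcal N_C(x^{k+1})$ produce an element $(v_x,v_y,v_z)\in\partial H(x^{k+1},x^k,y^k)$.

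Second, I would make the specific choice of $\lambda_i^k$, $\zeta^{k+1}$ and $\nu^{k+1}$ furnished by the optimality characterization in Lemma~\ref{subproremarks}(iii): for $k\ge N_0$, so that $\theta_k\equiv\hat\theta$ by Theorem~\ref{alpha}, these may be chosen to satisfy
\[
0=\tfrac1{\hat\theta}\zeta^{k+1}+\sum_i\lambda_i^k\nabla g_i(y^k)+L_g(x^{k+1}-y^k)+\nu^{k+1}-\tfrac1{\hat\theta}\xi^k,
\]
where $\xi^k\in\partial P_2(x^k)$. The crucial point is that these \emph{same} $\lambda_i^k$ are admissible convex weights for the element of $\partial\Psi(x^{k+1},y^k)$ described above. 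With this choice the $x$-component
\[
v_x=\tfrac1{\hat\theta}\zeta^{k+1}-\tfrac1{\hat\theta}\nabla P_2(x^{k+1})+\sum_i\lambda_i^k\nabla g_i(y^k)+L_g(x^{k+1}-x^k)+L_g(x^{k+1}-y^k)+\nu^{k+1}
\]
collapses, upon substituting the displayed identity, to $v_x=\frac1{\hat\theta}\bigl(\xi^k-\nabla P_2(x^{k+1})\bigr)+L_g(x^{k+1}-x^k)$, while the other components read $v_y=L_g(x^k-x^{k+1})$ and $v_z=\sum_i\lambda_i^k\nabla^2 g_i(y^k)(x^{k+1}-y^k)+L_g(y^k-x^{k+1})$.

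Third, I would replace $\xi^k$ by $\nabla P_2(x^k)$, and this is the step I expect to be the main obstacle, since Assumption~\ref{A2} only guarantees differentiability of $P_2$ on an open set $U_0\supseteq\mathcal X$. The key observation is that, by Theorem~\ref{subconver}, every accumulation point of $\{x^k\}$ is a critical point and hence lies in $\mathcal X\subseteq U_0$; since the accumulation set is compact (Theorem~\ref{suffdec}(i)) and $U_0$ is open, a standard subsequence/closedness argument shows that $x^k$ (in fact $x^k$ and $x^{k+1}$) lie in a fixed compact subset of $U_0$ for all large $k$. Consequently $\partial P_2(x^k)=\{\nabla P_2(x^k)\}$, so $\xi^k=\nabla P_2(x^k)$, and $\nabla P_2$ is Lipschitz along the tail with a uniform modulus $L_{P_2}$. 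Then $v_x=\frac1{\hat\theta}\bigl(\nabla P_2(x^k)-\nabla P_2(x^{k+1})\bigr)+L_g(x^{k+1}-x^k)$ yields $\|v_x\|\le\bigl(\tfrac{L_{P_2}}{\hat\theta}+L_g\bigr)\|x^{k+1}-x^k\|$.

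Finally, I would bound the remaining components by elementary estimates. Clearly $\|v_y\|=L_g\|x^{k+1}-x^k\|$. For $v_z$, the weights $\lambda_i^k$ are convex, the sequence $\{y^k\}$ is bounded (Theorem~\ref{suffdec}(i) and \eqref{defyk}), and each $\nabla^2 g_i$ is continuous, so $\|\nabla^2 g_i(y^k)\|$ is uniformly bounded by some $M$, giving $\|v_z\|\le(M+L_g)\|x^{k+1}-y^k\|$; combining this with $\|x^{k+1}-y^k\|\le\|x^{k+1}-x^k\|+\bar\beta\|x^k-x^{k-1}\|$, which follows from \eqref{defyk} and $\beta_k\le\bar\beta$, puts $\|v_z\|$ into the required form. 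Adding the three bounds and using $\d(0,\partial H(x^{k+1},x^k,y^k))\le\|(v_x,v_y,v_z)\|\le\|v_x\|+\|v_y\|+\|v_z\|$ gives the claimed inequality with a suitable $\tau>0$, valid for every $k\ge N_1$, where $N_1$ is chosen large enough that both $\theta_k\equiv\hat\theta$ and $x^k,x^{k+1}\in U_0$.
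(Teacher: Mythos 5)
Your proposal is correct and follows essentially the same route as the paper's proof: both compute an explicit element of $\partial H(x^{k+1},x^k,y^k)$ via the sum and max subdifferential rules of \cite[Exercise~8.8, Exercise~8.31]{rock97a}, cancel the $\partial P_1$, $\mathcal{N}_C$ and $\sum_i\lambda_i^k\nabla g_i(y^k)$ contributions using the very multipliers furnished by Lemma~\ref{subproremarks}(iii) with $\theta_k\equiv\hat\theta$, localize the tail of $\{x^k\}$ in a compact subset of $U_0$ (via Theorems~\ref{suffdec} and \ref{subconver}) to replace $\xi^k$ by $\nabla P_2(x^k)$ with a uniform Lipschitz modulus, and finish with the same elementary norm estimates using \eqref{defyk}. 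Your algebraic simplification of the $x$-component to $\frac1{\hat\theta}\bigl(\nabla P_2(x^k)-\nabla P_2(x^{k+1})\bigr)+L_g(x^{k+1}-x^k)$ is identical to the paper's vector, just combined earlier.
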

\begin{proof}
Let $\varLambda$ be the set of accumulation points of $\{x^k\}$. Then $\varLambda$ is nonempty and compact in view of Theorem~\ref{suffdec}(i), and $\varLambda\subseteq {\cal X}$ thanks to Theorem~\ref{subconver}, where ${\cal X}$ is defined in Assumption~\ref{A2}. Moreover, we have $\lim_{k\to \infty}\d(x^k,\varLambda)=0$. Since $\Lambda\subseteq {\cal X}\subset U_0$ (where $U_0$ is defined in Assumption~\ref{A2}) and $\Lambda$ is compact, there exist a bounded open set $U_1$ and an $N_2\in \mathbb{N}$ such that $x^k\in U_1$ for all $k\geq N_2$ and the closure of $U_1$ is contained in $U_0$.\footnote{The existence of such a $U_1$ can be argued as follows: since $\Lambda$ is compact, there exists $\epsilon > 0$ such that $\{x\in \R^n: \d(x,\Lambda) < \epsilon\}\subseteq U_0$. Then set $U_1 := \{x\in \R^n: \d(x,\Lambda) < \epsilon/2\}$.}

Next, let $N_0$ be defined as in Theorem \ref{alpha}. Since $P_2$ is continuously differentiable on $U_0$ and $x^k\in U_1\subset U_0$ for any $k\geq N_1 := \max\{N_0,N_2\}$, we obtain from \cite[Theorem~8.6]{rock97a} that for any $k \ge N_1$,
\begin{align}\label{eq16}
&\!\!\!\!\!\!\!\!\!\!~~~\partial H(x^{k+1},x^{k},y^{k})\supseteq \widehat{\partial} H(x^{k+1},x^{k},y^{k})\notag \\
&\!\!\!\!\!\!\!\!\!\!~~\overset{\rm(a)}\supseteq\begin{bmatrix}
  \frac{1}{\hat{\theta}}\widehat\partial P(x^{k+1}) + \mathcal{N}_C(x^{k+1}) + L_g(x^{k+1}- x^k) + L_g(x^{k+1}- y^k)\\
  - L_g(x^{k+1} - x^{k})\\
   - L_g(x^{k+1}-y^{k})
\end{bmatrix} + \widehat\partial \Xi (x^{k+1},x^k,y^k)\notag\\
&\!\!\!\!\!\!\!\!\!\!~~\overset{\rm(b)}=\begin{bmatrix}
  \frac{1}{\hat{\theta}}\partial P(x^{k+1})+ \mathcal{N}_C(x^{k+1}) + L_g(x^{k+1}- x^k) + L_g(x^{k+1}- y^k)\\
  - L_g(x^{k+1} - x^{k})\\
   - L_g(x^{k+1}-y^{k})
\end{bmatrix} + \partial \Xi (x^{k+1},x^k,y^k)\notag\\
&\!\!\!\!\!\!\!\!\!\!~~\overset{\rm(c)}{\supseteq}\!\!\!
\left[
\begin{array}{c}
\!\frac{1}{\hat{\theta}}\partial P(x^{k+1}) \!+ \!\!\!\!\sum\limits_{i\in I_k(x^{k+1})}\!\!\! \lambda_i^k \nabla g_i(y^k) \!+\! \mathcal{N}_C(x^{k+1}) \!+\! L_g(x^{k+1} \!-\! x^k) \!+\! L_g(x^{k+1} \!-\! y^k)    \\
 - L_g(x^{k+1} - x^{k}) \\ [5 pt]
\sum_{i\in I_k(x^{k+1})} \lambda_i^k\nabla^2 g_i(y^{k})(x^{k+1} - y^{k}) - L_g(x^{k+1}-y^{k})
\end{array}\!
\right]\!\!,\!\!\!\!
\end{align}
where $\Xi(x,y,z) := \max_{i=1,\ldots,m}[\ling{i}{x}{z}]_+$, and $I_k(x^{k+1})$ and $\lambda_i^k$ are defined as in Lemma~\ref{subproremarks}(iii);
here, (a) holds because of the subdifferential calculus rules in \cite[Proposition~10.5, Corollary~10.9]{rock97a} and the regularity of the normal cone of $C$ in \cite[Theorem~6.9]{rock97a}, (b) holds because $\partial \Xi = \widehat\partial \Xi$ (thanks to \cite[Example~7.28]{rock97a}) and $\partial P(x^{k+1}) = \widehat \partial P(x^{k+1})$ (thanks to the regularity of $P_1$ as asserted in \cite[Proposition~8.12]{rock97a}, the assumption that $P_2$ is continuously differentiable at $x^{k+1}\in U_0$ and the subdifferential calculus rule \cite[Exercise~8.8(c)]{rock97a}), and (c) follows from \cite[Proposition~10.5, Exercise~8.31]{rock97a} and the fact that $\sum_{i\in I_k(x^{k+1})} \lambda_i^k = 1$ and $\lambda_i^k\ge 0$ for all $i\in I_k(x^{k+1})$.

On the other hand, according to Theorem \ref{alpha} and the definition of $\hat\theta$ in \eqref{defH}, we have that $\theta_k \equiv \theta_{N_0} = \hat \theta$ for any $k\ge N_0$. Using this together with the property of $\lambda^k_i$ from Lemma~\ref{subproremarks}(iii) and the differentiability assumption on $P_2$, we obtain that for all $k \ge N_1$,
\begin{align*}
0\in \partial P_1(x^{k+1}) - \nabla P_2(x^{k}) + \hat{\theta}\sum_{i\in I_k(x^{k+1})} \lambda_i^k \nabla g_i(y^k) + \hat{\theta}L_g(x^{k+1} - y^{k})+ \mathcal{N}_C(x^{k+1}).
\end{align*}
Rearranging terms in the above display, we see that
\begin{align}\label{eq23}
\nabla P_2(x^{k}) \!-\! \hat{\theta}\sum_{i\in I_k(x^{k+1})} \lambda_i^k \nabla g_i(y^k) - \hat{\theta}L_g(x^{k+1} - y^{k})\in \partial P_1(x^{k+1}) + \mathcal{N}_C(x^{k+1}).
\end{align}

Since $P_2$ is continuously differentiable in $U_0$ (and hence at $x^k$ and $x^{k+1}$ when $k\ge N_1$), we obtain for any $k\ge N_1$ that
\begin{align}\label{eq22}
&\frac{1}{\hat{\theta}}\left(-\hat{\theta}L_g(x^{k} - y^{k}) + \nabla P_2(x^{k}) - \nabla P_2(x^{k+1})\right) \notag\\
& = \frac{1}{\hat{\theta}}\left(\hat{\theta}L_g(x^{k+1} - x^{k}) - \nabla P_2(x^{k+1}) + \hat{\theta}\sum_{i\in I_k(x^{k+1})} \lambda_i^k\nabla g_i(y^k)\right) \notag\\
&~~~~ + \frac{1}{\hat{\theta}}\left(\nabla P_2(x^{k}) - \hat{\theta}\sum_{i\in I_k(x^{k+1})} \lambda_i^k\nabla g_i(y^k)
 - \hat{\theta}L_g(x^{k+1} - y^{k})\right) \notag\\
& \overset{\rm(a)}{\in}\! \frac{1}{\hat{\theta}}\!\left(\hat{\theta}L_g(x^{k+1} \!-\! x^{k}) \!-\! \nabla P_2(x^{k+1}) + \hat{\theta}\!\!\!\!\sum_{i\in I_k(x^{k+1})}\!\! \lambda_i^k\nabla g_i(y^k)\right) \!+\! \frac{1}{\hat{\theta}}\partial P_1(x^{k+1}) \!+\! \mathcal{N}_C(x^{k+1}) \notag\\
& = \frac{1}{\hat{\theta}}\partial P(x^{k+1}) + \sum_{i\in I_k(x^{k+1})} \lambda_i^k \nabla g_i(y^k) + \mathcal{N}_C(x^{k+1}) + L_g(x^{k+1} - x^k)
\end{align}
where (a) follows from \eqref{eq23}, and the last equality holds thanks to \cite[Exercise~8.8(c)]{rock97a} and the fact that $P = P_1 - P_2$.

Combining \eqref{eq16} and \eqref{eq22}, for any $k\geq N_1$, we have
\begin{equation}
\begin{aligned}
\nonumber
\left[
\begin{array}{c}
\frac{1}{\hat{\theta}}\left(-\hat{\theta}L_g(x^{k} - y^{k}) + \nabla P_2(x^{k}) - \nabla P_2(x^{k+1})\right) + L_g(x^{k+1} - y^{k})\\
-L_g(x^{k+1} - x^{k})\\
\sum_{i\in I_k(x^{k+1})}\lambda_i^k\nabla^2 g_i(y^{k})(x^{k+1} - y^{k}) - L_g(x^{k+1} - y^{k})
\end{array}
\right]
\!\!\in\! \partial H(x^{k+1},x^{k},y^{k}).
\end{aligned}
\end{equation}
Since $\nabla P_2$ is locally Lipschitz continuous on $U_0$ (and hence Lipschitz continuous on the bounded open set $U_1$, say, with modulus $L_{P_2}$), we see for any $k\geq N_1$ that
\begin{align*}
& \d\left(0,\partial H(x^{k+1},x^{k},y^{k})\right)^2 \\
&\leq \left\| \frac{1}{\hat{\theta}}\!\left(\!-\hat{\theta}L_g(x^{k} - y^{k}) \!+\! \nabla P_2(x^{k}) - \nabla P_2(x^{k+1})\right) \!+\! L_g(x^{k+1} \!-\! y^{k})\right\|^2\!\!\!\! +\! \| L_g(x^{k+1} - x^{k})\|^2 \\
&~~~~~~ + \left\| \sum_{i\in I_k(x^{k+1})} \lambda_i^k\nabla^2 g_i(y^k)(x^{k+1} - y^{k}) - L_g(x^{k+1} - y^{k})\right\|^2 \\
&\leq 3L_g^2\| x^{k} - y^{k}\|^2 + \frac{3}{\hat{\theta}^2}L_{P_2}^2\| x^{k+1} - x^{k}\|^2 + 3L_g^2\| x^{k+1} - y^{k}\|^2 + L_g^2\| x^{k+1} - x^{k} \|^2 \\
&~~~~~~ + 2\left\|\sum_{i\in I_k(x^{k+1})} \lambda_i^k\nabla^2 g_i(y^k)\right\|^2 \| x^{k+1}-y^{k} \|^2 + 2L_g^2\| x^{k+1} - y^{k}\|^2.
\end{align*}
The desired conclusion now follows immediately from the above display, the definition and the boundedness of $\{y^k\}$ (thanks to Theorem~\ref{suffdec}(i) and \eqref{defyk}) and the continuity of $\nabla^2 g_i$ (thanks to Assumption~\ref{A2}).
\end{proof}

Now, we present the convergence rate of the $\{x^k\}$ generated by Algorithm~\ref{alg:Framwork} under suitable assumptions. The proof is routine and we refer the readers to, for example, the proofs of Theorems 4.2 and 4.3 of \cite{wen18}.
\begin{theorem}[Global convergence and convergence rate of Algorithm \ref{alg:Framwork} in nonconvex setting]\label{th2.2}
Consider \eqref{eq1}. Suppose that Assumptions~\ref{A1} and \ref{A2} hold, and the $H$ in \eqref{defH} is a KL function. Let $\{(x^k, y^k)\}$ be generated by Algorithm~\ref{alg:Framwork} and $\Omega$ be the set of accumulation points of $\left\{(x^{k+1},x^{k},y^{k})\right\}$. Then $\{x^k\}$ converges to a critical point $\bar{x}$ of \eqref{eq1}. Moreover, if $H$ satisfies the KL property with exponent $\alpha\in [0,1)$ at every point in $\Omega$, then there exists $\underline{N}\in \mathbb{N}$ such that the following statements hold.
\begin{enumerate}[{\rm (i)}]
    \item If $\alpha=0$, then $\{x^k\}$ converges finitely, i.e., $x^k \equiv \bar x$ for all $k> \underline{N}$.
    \item If $\alpha\in (0,\frac{1}{2}]$, then there exist $a_0\in(0, 1)$ and $a_1>0$ such that \[
        \|x^k - \bar{x}\|\leq a_1a_0^k ~~\forall k > \underline{N}.
        \]
    \item If $\alpha\in (\frac{1}{2},1)$, then there exists $a_2>0$ such that
        \[
        \|x^k - \bar{x}\|\leq a_2k^{-\frac{1-\alpha}{2\alpha-1}} ~~\forall k > \underline{N}.
        \]
\end{enumerate}
\end{theorem}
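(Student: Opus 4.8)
The plan is to run the now-standard Kurdyka--{\L}ojasiewicz machinery of \cite{attouch13,bolte14}, taking $w^k := (x^{k+1},x^{k},y^{k})$ as the relevant iterate of the potential $H$. All three ingredients required by this framework are already in hand: the sufficient-descent property $H(w^k)\le H(w^{k-1}) - c\,\|x^{k}-x^{k-1}\|^2$ for $k\ge N_0$, with $c:=\tfrac{L_g-(L_g+\ell_g)\bar\beta^2}{2}>0$, from Remark~\ref{rebarh}; the relative-error bound $\d(0,\partial H(w^k))\le \tau(\|x^{k+1}-x^{k}\|+\|x^{k}-x^{k-1}\|)$ for $k\ge N_1$ from Lemma~\ref{th2.1}; and the fact that $\omega:=\lim_k H(w^k)$ exists with $H\equiv\omega$ on the nonempty compact accumulation set $\Omega$ from Lemma~\ref{lemma1}.

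First I would dispose of the trivial case in which $H(w^{k_0})=\omega$ for some finite $k_0$: monotonicity together with sufficient descent then forces $x^{k}\equiv x^{k_0}$ for all $k> k_0$, giving finite convergence, so from here on I may assume $H(w^k)>\omega$ for every $k$. Next I would invoke the uniformized KL property (\cite[Section~3.5]{bolte14}) on the compact set $\Omega$, on which $H$ is constant: since $\d(w^k,\Omega)\to 0$ (because $\|x^k-x^{k-1}\|\to 0$ and $\|x^k-y^k\|\to 0$ by Theorem~\ref{suffdec}(iii)) and $H(w^k)\downarrow\omega$, for all large $k$ the iterate $w^k$ lies in the uniformized neighborhood, so $\phi'(H(w^k)-\omega)\,\d(0,\partial H(w^k))\ge 1$ for a single concave $\phi$.

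The core step is to combine these facts. Writing $a_k:=\|x^k-x^{k-1}\|$ and $\Delta_k:=\phi(H(w^k)-\omega)$, concavity of $\phi$ gives $\Delta_k-\Delta_{k+1}\ge \phi'(H(w^k)-\omega)\bigl(H(w^k)-H(w^{k+1})\bigr)$, and feeding in the KL inequality, the relative-error bound, and sufficient descent yields $a_{k+1}^2\le \tfrac{\tau}{c}(a_{k+1}+a_k)(\Delta_k-\Delta_{k+1})$. Applying $\sqrt{uv}\le \tfrac14 u + v$ and then telescoping shows $\sum_k a_k<\infty$ (note this is strictly stronger than the square-summability already recorded in Theorem~\ref{suffdec}(iii)), so $\{x^k\}$ is Cauchy and converges to some $\bar x$; that $\bar x$ is a critical point of \eqref{eq1} is immediate from Theorem~\ref{subconver}.

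Finally, for the rate claims I would specialize to $\phi(\varsigma)=\rho\varsigma^{1-\alpha}$ and control the tail $S_k:=\sum_{j\ge k}a_j\ge \|x^k-\bar x\|$. Combining sufficient descent, the error bound, and the KL inequality with this explicit $\phi$ produces a recursion relating $S_k$ to $S_{k-1}-S_k$ with an exponent determined by $\alpha$, and the three regimes $\alpha=0$, $\alpha\in(0,\tfrac12]$, and $\alpha\in(\tfrac12,1)$ then follow from the standard sequence lemmas used in \cite[Theorems~4.2 and 4.3]{wen18}. I expect the only real subtlety---rather than any genuine obstacle---to be bookkeeping the block structure of $H$: the descent and error estimates are phrased in the scalar increments $a_k$, whereas $H$ depends on the triple $(x,y,z)$, so one must repeatedly use $\|x^k-y^k\|\to0$ and $\|x^k-x^{k-1}\|\to0$ to tie $\d(w^k,\Omega)$ and the increments of $H$ back to the quantities $a_k$.
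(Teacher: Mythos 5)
Your proposal is correct and takes essentially the same route as the paper: the paper's own proof of this theorem is simply the remark that the argument is routine given Remark~\ref{rebarh} (sufficient descent), Lemma~\ref{th2.1} (relative error bound), and Lemma~\ref{lemma1} (constancy of $H$ on $\Omega$), with the details deferred to the proofs of Theorems 4.2 and 4.3 of \cite{wen18}. You have assembled exactly those three ingredients into the standard KL machinery (uniformized KL, concavity/telescoping to get $\sum_k\|x^k-x^{k-1}\|<\infty$, then the exponent-dependent recursion for the rates), so there is nothing to add.
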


\subsection{Convergence analysis in convex setting}\label{sec42}
We study the convergence properties of Algorithm \ref{alg:Framwork} under the following convex settings.

\begin{assumption}\label{B1}
Suppose that in \eqref{eq1}, $P_2 = 0$ and $g_1,\ldots, g_m$ are convex.\footnote{Under this assumption, we also set $\ell_{g_i}= 0$ for all $i$ in Algorithm~\ref{alg:Framwork}.}
\end{assumption}

\begin{assumption}\label{B2}
The Slater condition holds for $C\cap \mathscr{F}$ in \eqref{eq1}, i.e., there exists $\hat{x}\in C$ with $g_i(\hat{x})<0$ for $i=1,\dots,m$.
\end{assumption}
\begin{remark}\label{rem43}
If each $g_i$ is convex and Assumption~\ref{B2} holds, then $RCQ(x)$ holds at every $x\in C$, which implies that Assumption~\ref{A1} holds thanks to Remark~\ref{remarkRCQ}(ii).
\end{remark}

Now, we present the convergence properties of Algorithm \ref{alg:Framwork} under Assumptions~\ref{B1} and \ref{B2}. Unlike our convergence rate result in Theorem~\ref{th2.2} which was based on the KL property of the function $H$ in \eqref{defH}, our analysis in this section is based on the KL property of the following function:
\begin{equation}\label{definfalpha}
F_{\eta}(x) := \frac{1}{\eta}(P_1(x) - \hat m) + \delta_C(x) + \max_{i = 1,\cdots,m}\left[g_i(x)\right]_+,
\end{equation}
where $\eta > 0$ and $\hat{m} := \inf\{P_1(x): x\in C\}\in \R$. Compared with $H$, the explicit KL exponent of $F_{\eta}$ is generically readily obtainable (from that of $P_1+\delta_{C\cap \mathscr{F}}$), as we will discuss in Section~\ref{sec5}.

\begin{theorem}[Convergence rate of Algorithm \ref{alg:Framwork} in convex setting]\label{Edecres}
Consider \eqref{eq1} and suppose that Assumptions~\ref{B1} and \ref{B2} hold. Let $\{(x^k,\theta_k)\}$ be generated by Algorithm \ref{alg:Framwork}. Then the following statements hold.
\begin{enumerate}[{\rm (i)}]
    \item For any $k\ge 1$,
          \[
          E(x^{k+1},x^{k},\theta_{k+1}) \leq E(x^k,x^{k-1},\theta_k) - \frac{(1 - \beta_k^2)L_g}{2}\|x^{k} - x^{k-1}\|^2,
          \]
          where $E(x,y,\theta):=\frac{1}{\theta}\big(P_1(x) - \hat{m} + \delta_C(x) + \theta\max_{i = 1,\cdots,m}[g_i(x)]_+ + \frac{\theta L_g}{2}\| x - y \|^2\big)$ with $\hat m$ defined as in \eqref{definfalpha}.
    \item Let $\Omega$ be the set of accumulation points of $\left\{(x^{k+1},x^{k},\theta_k)\right\}$. Then $\Omega$ is a nonempty compact set, $\bar{\omega} := \lim_{k\to \infty} E(x^{k+1},x^{k}, \theta_k)$ exists, and $E \equiv \bar{\omega}$ on $\Omega$.
    \item If the function\footnote{i.e., the $F_\eta$ in \eqref{definfalpha} with $\eta = \hat \theta$, where $\hat\theta$ is given in \eqref{defH}.} $F_{\hat\theta}$ is a KL function with exponent $\frac{1}{2}$,
     then $\{x^k\}$ converges to a minimizer $x^*$ of \eqref{eq1}, and there exist $c_0 > 0$, $s\in (0,1)$ and $k_0\in \mathbb{N}$ such that
        \[
        \|x^k - x^*\|\leq c_0 s^k ~~\forall k > k_0.
        \]
\end{enumerate}
\end{theorem}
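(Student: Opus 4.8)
The plan is to prove Theorem~\ref{Edecres} by following the same three-part template already used for the nonconvex case (Theorems~\ref{suffdec}, \ref{subconver}, \ref{th2.2}), but now anchoring everything to the simpler potential function $E$ and ultimately to the penalty-type function $F_{\hat\theta}$ in \eqref{definfalpha}. Under Assumptions~\ref{B1} and \ref{B2} we have $P_2\equiv 0$, each $g_i$ convex, $\ell_g = 0$, and (via Remark~\ref{rem43}) Assumption~\ref{A1} holds automatically, so all the subsequential-convergence machinery of Section~\ref{sec41} is available. The convexity of $g_i$ makes the Taylor/majorization step cleaner: since $\ling{i}{x}{y}\le g_i(x)$ for convex $g_i$, the troublesome terms involving $\ell_g$ in \eqref{gnonconvex} disappear, which is exactly why the descent coefficient sharpens from $1-\frac{L_g+\ell_g}{L_g}\beta_k^2$ to $1-\beta_k^2$ in item~(i).

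For item~(i) I would mimic the derivation of Theorem~\ref{suffdec}(ii): use the strong convexity of the subproblem objective \eqref{subproblem2} to get the analogue of \eqref{eq6}, then bound $\max_i[\ling{i}{x^k}{y^k}]_+$ above by $\max_i[g_i(x^k)]_+$ using convexity (no $L_g$ quadratic correction is needed since $\ling{i}{\cdot}{y}$ underestimates $g_i$), and finally rearrange using $y^k - x^k = \beta_k(x^k-x^{k-1})$ to produce the stated one-step decrease for $E$. Item~(ii) is then a verbatim adaptation of Lemma~\ref{lemma1}: the monotonicity and lower-boundedness of $\{E(x^{k+1},x^k,\theta_k)\}$ give existence of the limit $\bar\omega$, and the vanishing successive changes from Theorem~\ref{suffdec}(iii) (which still apply since $\ell_g=0$ only improves the constant) together with continuity of $P_1$ and $g_i$ force $E\equiv\bar\omega$ on the accumulation set $\Omega$.

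The substance lies in item~(iii), and this is where I expect the \textbf{main obstacle}. The goal is to upgrade the KL property with exponent $\tfrac12$ of $F_{\hat\theta}$ into a linear convergence rate for $\{x^k\}$. The plan is: first, by Theorem~\ref{alpha}, $\theta_k\equiv\hat\theta$ and $s^{k+1}=0$ for all large $k$, so the iterates eventually become \emph{feasible} ($g_i(x^{k+1})\le 0$) and the $\max$-term in $E$ vanishes; thus for large $k$, $E(x^{k+1},x^k,\theta_k)$ reduces to $\tfrac1{\hat\theta}(P_1(x^{k+1})-\hat m) + \delta_C(x^{k+1}) + \tfrac{L_g}2\|x^{k+1}-x^k\|^2$, which is $F_{\hat\theta}(x^{k+1})$ plus the proximal term. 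The crux is to establish a subdifferential bound $\d(0,\partial F_{\hat\theta}(x^{k+1}))\le \tau'(\|x^{k+1}-x^k\|+\|x^k-x^{k-1}\|)$ from the optimality condition of \eqref{eq2} (Lemma~\ref{subproremarks}(iii)), being careful that the $\max_i[g_i(\cdot)]_+$ term in $F_{\hat\theta}$ is built from the true $g_i$ whereas the subproblem uses the linearizations $\ling{i}{\cdot}{y^k}$; reconciling these requires controlling $\|x^{k+1}-y^k\|$ and invoking $\nabla g_i$-Lipschitzness, much as in Lemma~\ref{th2.1}. Once the sufficient-decrease property for $E$ (item~(i)), the constancy on $\Omega$ (item~(ii)), and this subdifferential bound are in hand, I would invoke the KL inequality with exponent $\tfrac12$ at the limit point to obtain a recursion of the form $\|x^{k+1}-x^k\|\le a\|x^k-x^{k-1}\|$ with $a\in(0,1)$, yielding $Q$-linear (hence geometric) convergence of the successive changes and therefore the bound $\|x^k-x^*\|\le c_0 s^k$. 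Convergence to a \emph{global minimizer} $x^*$ (rather than merely a critical point) follows because under Assumption~\ref{B1} problem~\eqref{eq1} is convex, so critical points coincide with minimizers.

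**The delicate point** worth flagging is matching the potential function $E$ (which lives on the lifted space and uses linearizations) with $F_{\hat\theta}$ (which uses the genuine constraints): the KL hypothesis is imposed on $F_{\hat\theta}$, so I must translate descent and stationarity estimates from $E$-language into $F_{\hat\theta}$-language, exploiting eventual feasibility and the fact that $\|x^{k+1}-y^k\|\to 0$ to make the discrepancy between $g_i(x^{k+1})$ and $\ling{i}{x^{k+1}}{y^k}$ a higher-order term absorbable into the error-bound constants.
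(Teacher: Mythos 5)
Your items (i) and (ii) do match the paper's own proof (strong convexity of the subproblem objective \eqref{subproblem2}, the underestimation $\ling{i}{x^k}{y^k}\le g_i(x^k)$ from convexity, then a rerun of Lemma~\ref{lemma1}), so the issue is item (iii), where your plan has two genuine flaws. First, the feasibility claim is backwards: for convex $g_i$ the linearization \emph{under}estimates, so $s^{k+1}=0$ only gives $\max_i\ling{i}{x^{k+1}}{y^k}\le 0$, hence $g_i(x^{k+1})\le\frac{L_g}{2}\|x^{k+1}-y^k\|^2$; since the linearized feasible region spills over the true one, the iterates are generically \emph{infeasible} and the max-term in $F_{\hat\theta}(x^{k+1})$ does not vanish. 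Second, and fatally, the relative-error bound $\d(0,\partial F_{\hat\theta}(x^{k+1}))\le\tau'(\|x^{k+1}-x^k\|+\|x^k-x^{k-1}\|)$ on which your whole KL-descent scheme rests is not just hard to derive --- it is false along ESQM iterates. At an infeasible point $x$ the kink of $[\cdot]_+$ is inactive, so $\partial F_{\hat\theta}(x)=\frac{1}{\hat\theta}\partial P_1(x)+\mathcal{N}_C(x)+\conv\{\nabla g_i(x): g_i(x)=\max_j g_j(x)\}$, which forces \emph{unit} total mass on the constraint gradients; by contrast, criticality of the limit $\bar x$ (Theorem~\ref{subconver}) only places mass $\le 1$ on them, the remaining weight sitting on the index $0$. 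Consequently $\d(0,\partial F_{\hat\theta}(\cdot))$ is bounded away from zero on the infeasible side near $\bar x$ unless $\hat\theta$ happens to equal the critical penalty threshold. Concretely, take $n=1$, $P_1(x)=x$, $C=[-2,2]$, $m=1$, $g_1(x)=x^2-1$, $L_g=2$: the solution is $x^*=-1$, one can check the iterates approach it from the infeasible side ($x^{k+1}\approx -1-\tfrac12(x^k+1)^2$), and there $\partial F_{\hat\theta}(x^{k+1})=\{\hat\theta^{-1}+2x^{k+1}\}\to\{\hat\theta^{-1}-2\}\neq\{0\}$ while the successive changes vanish. No Lipschitz argument ``as in Lemma~\ref{th2.1}'' can repair this: the value discrepancy between $g_i(x^{k+1})$ and $\ling{i}{x^{k+1}}{y^k}$ is second order, but the subdifferential of a max function is \emph{not} continuous under such perturbations (the active index set jumps), which is precisely why the nonconvex analysis builds the linearizations into the potential $H$ of \eqref{defH} through the extra variable $z$ instead of using the true constraints.

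The paper's proof of (iii) sidesteps subgradients of $F_{\hat\theta}$ altogether and works only with function values, which \emph{are} stable under second-order perturbations. It first records $\Lambda\subseteq S:=\Argmin F_{\hat\theta}$, observes that $E_{\hat\theta}(x,y):=F_{\hat\theta}(x)+\frac{L_g}{2}\|x-y\|^2$ inherits the KL exponent $\frac12$ by \cite[Theorem~3.6]{li18}, and converts this via Lemma~\ref{KLinequ} into the value-based error bound $\d((x,y),\widetilde S)^2\le c_0(E_{\hat\theta}(x,y)-\bar\omega)$ near $\widetilde S:=\{(x^*,x^*):x^*\in S\}$ (see \eqref{erro}). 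The key move is then to insert the \emph{nearest point} $\bar x^k\in S$ to $x^k$ --- a feasible point, so its linearized constraint values are nonpositive --- as the comparison point $x$ in the strong-convexity inequality \eqref{stongconve}; this yields \eqref{upperF}, a bound of $F_{\hat\theta}(x^{k+1})$ by $\bar\omega+\frac{L_g}{2\gamma}\d(x^k,S)^2+\frac{L_g}{2(1-\gamma)}\|x^k-y^k\|^2-\frac{L_g}{2}\d(x^{k+1},S)^2$. Combining this with the descent from item (i) and the error bound, a purely algebraic recursion gives Q-linear decay of $E_{\hat\theta}(x^{k+1},x^k)-\bar\omega$, hence geometric decay of $\|x^{k+1}-x^k\|$, the Cauchy property, and R-linear convergence of $\{x^k\}$. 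This is a Luo--Tseng-style error-bound argument, not the descent-sequence KL recipe; note also that it delivers an R-linear rate, not the stepwise contraction $\|x^{k+1}-x^k\|\le a\|x^k-x^{k-1}\|$ you predicted (which KL arguments do not yield in general). To salvage your plan you would have to either adopt this projection/error-bound route, or impose the KL hypothesis on a linearization-based potential for which Lemma~\ref{subproremarks}(iii) produces genuine subgradients.
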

%
\begin{proof}
Using the strong convexity of the objective in \eqref{subproblem2} (note that $\xi^k = 0$ as $P_2 = 0$) and the fact that $x^{k+1}$ minimizes this objective over $C$, we obtain that for any $x\in C$,
\begin{align}\label{stongconve}
&P_1(x^{k+1}) + \theta_k \max_{i = 1,\cdots,m}\left[\ling{i}{x^{k+1}}{y^k}\right]_+ + \frac{\theta_k L_g}{2}\| x^{k+1} - y^k\|^2 \notag\\
&\leq P_1(x) + \theta_k \max_{i = 1,\cdots,m}\left[\ling{i}{x}{y^k}\right]_+ + \frac{\theta_k L_g}{2}\| x - y^k \|^2 - \frac{\theta_k L_g}{2}\| x - x^{k+1}\|^2.
\end{align}
Now we are ready to prove the three items one by one.

(i): For any $k \ge 1$, we see that
\begin{align*}
&\frac{1}{\theta_{k+1}}\!\left(P_1(x^{k+1}) \!-\! \hat{m}\right) \!+\!\! \max_{i = 1,\cdots,m}\left[g_i(x^{k+1})\right]_+\overset{\rm(a)}{\leq} \frac{1}{\theta_k}\!\left(P_1(x^{k+1}) \!-\! \hat{m}\right)\! +\!\! \max_{i = 1,\cdots,m}\left[g_i(x^{k+1})\right]_+\\
&\overset{\rm(b)}{\leq} \frac{P_1(x^{k+1}) - \hat{m}}{\theta_k} + \max_{i = 1,\cdots,m}\left[\ling{i}{x^{k+1}}{y^k} + \frac{L_{g_i}}{2}\| x^{k+1} - y^k \|^2\right]_+\\
&\overset{\rm(c)}{\leq} \frac{P_1(x^{k+1}) - \hat{m}}{\theta_k} + \max_{i = 1,\cdots,m}\left[\ling{i}{x^{k+1}}{y^k} \right]_+ + \frac{L_{g}}{2}\| x^{k+1} - y^k \|^2\\
&\overset{\rm(d)}{\leq}\frac{P_1(x^{k}) - \hat{m}}{\theta_k} + \max_{i = 1,\cdots,m}\left[\ling{i}{x^k}{y^k}\right]_+ + \frac{L_{g}}{2}\| x^{k} - y^k \|^2 - \frac{L_g}{2}\| x^{k+1} - x^k \|^2\\
&\overset{\rm(e)}{\leq}\frac{P_1(x^{k}) - \hat{m}}{\theta_k} +  \max_{i = 1,\cdots,m}\left[g_i(x^k)\right]_+ + \frac{L_{g}}{2}\| x^{k} - y^k \|^2 - \frac{L_g}{2}\| x^{k+1} - x^k \|^2\\
&= \frac{1}{\theta_k}\!\left(\!P_1(x^{k}) - \hat{m} + \theta_k\max_{i = 1,\cdots,m}\left[g_i(x^k)\right]_+\!\right)\! +\! \frac{\beta_k^2 L_g}{2}\|x^k - x^{k-1}\|^2 - \frac{L_g}{2}\| x^{k+1} - x^k \|^2\\
&= E(x^{k},x^{k-1},\theta_k) - \frac{(1 - \beta_k^2)L_g}{2}\|x^k - x^{k-1}\|^2 - \frac{L_g}{2}\| x^{k+1} - x^k \|^2,
\end{align*}
where (a) holds thanks to $\theta_k \leq\theta_{k+1}$ and $\hat{m} = \inf\{P_1(x): x\in C\}$, (b) holds because of the Lipschitz continuity of $\nabla g_i$, (c) follows from $L_g = \max\{L_{g_i}:\; i=1,\dots,m\}$, (d) holds upon invoking \eqref{stongconve} with $x=x^k$ (as $x^k\in C$), (e) follows from the convexity of $g_i$, and the last equality follows from the definition of $E(x^{k},x^{k-1},\theta_{k})$. The desired inequality now follows immediately from the above display and the definition of $E(x^{k+1},x^{k},\theta_{k+1})$.

(ii): Using similar arguments as Lemma~\ref{lemma1} (but using item (i) in place of Remark~\ref{rebarh}, and noting that Assumption~\ref{A1} holds according to Remark~\ref{rem43}), one can show that (ii) holds. We omit its proof for brevity.

(iii): Let $\Lambda$ be the set of accumulation points of $\{x^k\}$ for notational simplicity. From Remark~\ref{rem43}, Theorem~\ref{subconver} and the formula for the subdifferential of $\max_{i=1,\ldots,m}[g_i(\cdot)]_+$ (see \cite[Exercise~8.31]{rock97a}),
we deduce that
\begin{equation}\label{set}
\emptyset\not=\Lambda\subseteq \Argmin F_{\hat{\theta}} =: S.
\end{equation}
Now, write $E_{\theta}(x, y) := E(x, y, \theta)$ for notational simplicity. By the definitions of $F_\eta$ in \eqref{definfalpha} and $E(x, y, \theta)$ in item (i), we see that $E_{\hat{\theta}}(x, y) = F_{\hat{\theta}}(x) + \frac{L_g}{2}\|x - y\|^2$, where $\hat\theta$ is as in \eqref{defH}. From Remark~\ref{rem43}, Theorem~\ref{alpha} and item (i), we have that for any $k\geq N_0$, it holds that $\theta_k = \hat\theta$ and
\begin{equation}\label{upperK}
E_{\hat{\theta}}(x^{k+1}, x^k)\leq E_{\hat{\theta}}(x^k, x^{k-1}) -\frac{L_g(1 - \bar{\beta}^2)}{2}\|x^k - x^{k-1}\|^2,
\end{equation}
where $\bar \beta = \sup_k \beta_k < 1$ (recall that $\ell_g = 0$ under Assumption~\ref{B1}).

Let $\widetilde{S} = \{(x^*, x^*): x^*\in S\}$ and $\widetilde{\Lambda} = \{(x^*, x^*): x^*\in \Lambda\}$.
In view of \eqref{set}, we have $F_{\hat\theta}(\bar x) = \inf F_{\hat\theta}$ for any $\bar x\in S$. Using this together with item (ii) and the definition of $E_{\hat\theta}$, one can show readily that whenever $\bar{x}\in S$
\begin{equation}\label{valueomega}
\bar{\omega} = E_{\hat{\theta}}(\bar{x}, \bar{x}) = F_{\hat{\theta}}(\bar{x}) = \inf_x F_{\hat{\theta}}(x) = \inf_{x,y} E_{\hat{\theta}}(x, y).
\end{equation}
Moreover, in view of \eqref{set} and the definition of $E_{\hat\theta}$, we have
\begin{equation}\label{set2}
\emptyset\not=\widetilde{\Lambda}\subseteq \widetilde{S}= \Argmin\limits_{x,y} E_{\hat{\theta}}(x, y).
\end{equation}
Furthermore, since $F_{\hat{\theta}}$ is a KL function with exponent $\frac{1}{2}$, we conclude from \cite[Theorem~3.6]{li18} that $E_{\hat{\theta}}$ is a KL function with exponent $\frac{1}{2}$.
Using this together with \eqref{valueomega}, \eqref{set2} and Lemma~\ref{KLinequ}, we deduce that there exist $\epsilon_0 >0$, $r_0>0$, and $c_0>0$ such that
\begin{equation}\label{erro}
\d((x, y), \widetilde{S})^2\leq c_0(E_{\hat{\theta}}(x, y) - \bar\omega),
\end{equation}
for any $(x, y)\in\dom \partial E_{\hat{\theta}}$ satisfying $\d((x, y), \widetilde{S})\leq \epsilon_0$ and $\bar\omega\leq E_{\hat{\theta}}(x, y) < \bar\omega + r_0$.

Next, notice that $\{(x^k, x^{k-1})\}\subseteq C\times C\subset \dom \partial E_{\hat{\theta}} = C\times \R^n$, and we have from Theorem~\ref{suffdec}(iii) that $\widetilde{\Lambda}$ is the set of accumulation points of the bounded sequence $\{(x^k,x^{k-1})\}$. Using this and \eqref{set2}, we deduce that there exists $k_1\in \mathbb{N}$ such that
\begin{equation}\label{erro1}
\d((x^k, x^{k-1}), \widetilde{S})\leq\d((x^k, x^{k-1}), \widetilde{\Lambda})\leq\epsilon_0~~~~ \forall k\geq k_1.
\end{equation}
On the other hand, from Remark~\ref{rem43}, Theorem~\ref{alpha} and item (ii), we deduce the existence of $k_2\in \mathbb{N}$ such that
\begin{equation}\label{Ferro1}
\bar\omega\leq E_{\hat{\theta}}(x^k, x^{k-1})<\bar\omega + r_0 ~~~~ \forall k\geq k_2.
\end{equation}
Combining \eqref{erro}, \eqref{erro1} and \eqref{Ferro1}, we conclude that for any $k\geq k_3:=\max\{k_1, k_2\}$,
\begin{equation}\label{Eerro}
\d(x^{k}, S)^2\leq\d((x^k, x^{k-1}), \widetilde{S})^2\leq c_0(E_{\hat{\theta}}(x^k, x^{k-1}) - \bar\omega).
\end{equation}

Next, let $\bar{x}^k\in S$ satisfy $\|x^k - \bar{x}^k\| = \d(x^k, S)$. Then for any $k\geq N_0$ (note that $N_0$ is defined in Theorem~\ref{alpha}) and $\gamma\in(\frac{L_g c_0}{1 + L_gc_0}, 1)$, we have
\begin{align}\label{upperF}
&F_{\hat{\theta}}(x^{k+1})= \frac{1}{\hat{\theta}}\left(P_1(x^{k+1}) - \hat{m} + \hat{\theta}\max_{i = 1,\cdots,m}\left[g_i(x^{k+1})\right]_+\right) \nonumber\\
&\overset{\rm (a)}\leq \frac{1}{\hat{\theta}}\left(P_1(x^{k+1}) - \hat{m} + \hat{\theta}\max_{i = 1,\cdots,m}\left[\ling{i}{x^{k+1}}{y^k} + \frac{L_{g_i}}{2}\|x^{k+1} - y^k\|^2\right]_+\right) \nonumber\\
&\overset{\rm (b)}\leq\frac{1}{\hat{\theta}}\left(P_1(x^{k+1}) - \hat{m} + \hat{\theta}\max_{i = 1,\cdots,m}\left[\ling{i}{x^{k+1}}{y^k}\right]_+ + \frac{\hat{\theta}L_{g}}{2}\|x^{k+1} - y^k\|^2\right)\nonumber\\
&\overset{\rm (c)}\leq\frac{1}{\hat{\theta}}\left(P_1(\bar{x}^k) - \hat{m}\right) + \max_{i = 1,\cdots,m}\left[\ling{i}{\bar{x}^k}{y^k}\right]_+ + \frac{L_g}{2}\|\bar{x}^k - y^k\|^2 - \frac{L_g}{2}\|\bar{x}^k - x^{k+1}\|^2 \nonumber\\
&\overset{\rm (d)}\leq F_{\hat{\theta}}(\bar{x}^k) + \frac{L_g}{2}\|\bar{x}^k - y^k\|^2 - \frac{L_g}{2}\|\bar{x}^k - x^{k+1}\|^2, \nonumber\\
&\overset{\rm (e)}\leq F_{\hat{\theta}}(\bar{x}^k) + \frac{L_g}{2}\left(\|\bar{x}^k - x^k\| + \|x^k - y^k\|\right)^2 - \frac{L_g}{2}\|\bar{x}^k - x^{k+1}\|^2, \nonumber\\
&\overset{\rm(f)}\leq F_{\hat{\theta}}(\bar{x}^k)  + \frac{L_g}{2\gamma}\|\bar{x}^k - x^k\|^2 + \frac{L_g}{2(1 - \gamma)}\|x^k - y^k\|^2 - \frac{L_g}{2}\|\bar{x}^k - x^{k+1}\|^2\nonumber\\
&\overset{\rm(g)}\leq \bar\omega + \frac{L_g}{2\gamma}\d(x^k, S)^2 + \frac{L_g}{2(1 - \gamma)}\|x^k - y^k\|^2 - \frac{L_g}{2}\d(x^{k+1}, S)^2,
\end{align}
where (a) holds because of the Lipschitz continuity of $\nabla g_i$, (b) holds because $L_g = \max_{i = 1,\cdots,m}\{L_{g_i}\}$, (c) follows from \eqref{stongconve} with $x = \bar{x}^k$ (thanks to $\bar{x}^k\in S\subseteq C$ and the fact that $\theta_k = \hat\theta$ for $k\ge N_0$), (d) follows from the convexity of $g_i$ so that $\ling{i}{\bar{x}^k}{y^k}\le g_i(\bar{x}^k)$ for all $i$, (e) follows from the triangle inequality, (f) follows from the fact that $(a + b)^2 = (\gamma\frac{a}{\gamma} + (1 - \gamma)\frac{b}{(1-\gamma)})^2\leq\frac{a^2}{\gamma} + \frac{b^2}{(1-\gamma)}$ as $\gamma\in(0,1)$, and (g) holds thanks to \eqref{valueomega} and the definition of $\bar{x}^k$.

Then, we have for any $k\geq k_4:= \max\{k_3, N_0\}$ that
\begin{align*}
&E_{\hat{\theta}}(x^{k+1},x^k) - \bar\omega
= F_{\hat{\theta}}(x^{k+1}) - \bar\omega + \frac{L_g}{2}\|x^{k+1} - x^k\|^2 \\
&\overset{\rm(a)}\leq \frac{L_g}{2\gamma}\d(x^k, S)^2 + \frac{L_g}{2(1 - \gamma)}\|x^k - y^k\|^2 - \frac{L_g}{2\gamma}\d(x^{k+1}, S)^2  \nonumber\\
&~~~~+ \frac{L_g}{2}\left(\frac{1}{\gamma} - 1\right)\d(x^{k+1}, S)^2 + \frac{L_g}{2}\|x^{k+1} - x^k\|^2 \\
&\overset{\rm(b)}\leq \left(\frac{L_g}{2\gamma}\d(x^k, S)^2 - \frac{L_g}{2}\|x^k - x^{k-1}\|^2\right) - \left(\frac{L_g}{2\gamma}\d(x^{k+1}, S)^2 - \frac{L_g}{2}\|x^{k+1} - x^k\|^2\right) \nonumber\\
&~~~~+ \frac{L_g\bar{\beta}^2}{2(1 - \gamma)}\|x^k - x^{k-1}\|^2 + \frac{L_g}{2}\|x^k - x^{k-1}\|^2 + \frac{L_g}{2}\left(\frac{1}{\gamma} - 1\right)c_0(E_{\hat{\theta}}(x^{k+1}, x^{k}) - \bar\omega),
\end{align*}
where (a) holds because of \eqref{upperF}, and (b) follows from \eqref{Eerro}, $y^k = x^k + \beta_k(x^k - x^{k-1})$ and $\bar{\beta} = \sup_k\beta_k$.

Now, notice that $\gamma\in(\frac{L_g c_0}{1 + L_gc_0}, 1)$ implies $\frac{L_g}{2}(\frac{1}{\gamma} - 1)c_0 < \frac{1}{2}$. Letting $\vartheta: = 1 - \frac{L_g}{2}(\frac{1}{\gamma} - 1)c_0$, then we known that $\vartheta > \frac{1}{2}$. Rearranging terms in the above display inequality, we have that for any $k\ge k_4$,
\begin{align*}\label{upperE1}
&\vartheta \left(E_{\hat{\theta}}(x^{k+1},x^k) - \bar\omega\right) \\
& \leq \frac{L_g}{2}\left(\frac{1}{\gamma}\d(x^k, S)^2 - \|x^k - x^{k-1}\|^2\right) - \frac{L_g}{2}\left(\frac{1}{\gamma}\d(x^{k+1}, S)^2 -\|x^{k+1} - x^k\|^2\right) \nonumber\\
&~~~~+ \frac{L_g(1 - \gamma + \bar{\beta}^2)}{2(1 - \gamma)}\|x^k - x^{k-1}\|^2 \nonumber\\
&\overset{\rm(a)}\leq \frac{L_g}{2}\left(\frac{1}{\gamma}\d(x^k, S)^2 - \|x^k - x^{k-1}\|^2\right) - \frac{L_g}{2}\left(\frac{1}{\gamma}\d(x^{k+1}, S)^2 -\|x^{k+1} - x^k\|^2\right) \nonumber\\
&~~~~+ \frac{L_g(1 - \gamma + \bar{\beta}^2)}{2(1 - \gamma)}\cdot\frac{2}{L_g(1 - \bar{\beta}^2)}\left(E_{\hat{\theta}}(x^k, x^{k-1}) - E_{\hat{\theta}}(x^{k+1}, x^k)\right), \nonumber
\end{align*}
where (a) follows from \eqref{upperK}.

Denote $\zeta: = \frac{1 + \bar{\beta}^2 - \gamma}{(1-\gamma)(1 - \bar{\beta}^2)} > 1$ and $A_k := \frac{L_g}{2}(\frac{1}{\gamma}\d(x^k, S)^2 - \|x^k - x^{k-1}\|^2)$. Rearranging terms in the above inequality, we obtain that for any $k\ge k_4$,
\begin{equation*}
(\vartheta + \zeta) \left(E_{\hat{\theta}}(x^{k+1},x^k) - \bar\omega\right)  \leq A_k - A_{k+1} + \zeta\left(E_{\hat{\theta}}(x^{k},x^{k-1}) - \bar\omega\right).
\end{equation*}
Dividing $\vartheta + \zeta$ on both sides in the above inequality, we see that for any $k\ge k_4$,
\begin{equation}\label{upperE2}
E_{\hat{\theta}}(x^{k+1},x^k) - \bar\omega \leq \frac{\zeta}{\vartheta + \zeta}\left(E_{\hat{\theta}}(x^{k},x^{k-1}) - \bar\omega\right) + \frac{1}{\vartheta + \zeta}A_k - \frac{1}{\vartheta + \zeta}A_{k+1}.
\end{equation}
Since $\vartheta > \frac{1}{2}$ and $\zeta > 1$, we have that for any $k\ge k_4$,
\begin{align}\label{upperA}
&\left|\frac{A_k}{\vartheta + \zeta}\right|\leq \left|A_k\right|\leq \frac{L_g}{2}\left(\frac{1}{\gamma}\d(x^k, S)^2 + \|x^k - x^{k-1}\|^2\right) \nonumber\\
&\overset{\rm(a)}\leq\frac{L_g c_0}{2\gamma}\left(E_{\hat{\theta}}(x^k, x^{k-1}) - \bar\omega\right) + \frac{L_g}{2}\|x^k - x^{k-1}\|^2\nonumber\\
&\overset{\rm(b)}\leq\frac{L_g c_0}{2\gamma}\left(E_{\hat{\theta}}(x^k, x^{k-1}) - \bar\omega\right) + \frac{1}{(1 - \bar{\beta}^2)}\left(E_{\hat{\theta}}(x^k, x^{k-1}) - E_{\hat{\theta}}(x^{k+1}, x^{k})\right)\nonumber\\
&\overset{\rm(c)} \leq c_1\left(E_{\hat{\theta}}(x^k, x^{k-1}) - \bar\omega\right),
\end{align}
where (a) holds thanks to \eqref{Eerro}, (b) holds because of \eqref{upperK}, and (c) follows from $E_{\hat{\theta}}(x^{k+1}, x^{k})\geq\bar\omega$ (see \eqref{valueomega}) with $c_1:= \frac{L_g c_0}{2\gamma} + \frac{1}{(1 - \bar{\beta}^2)}$.

Let $\varrho = \frac{c_1 + \frac{\zeta}{\vartheta+\zeta}}{c_1+1}\in(0,1)$. Then one can see that
\begin{equation}\label{divi}
\frac{\zeta}{\vartheta+\zeta} + (1 - \varrho)c_1 = \varrho.
\end{equation}
Then, from \eqref{upperE2}, we obtain that for any $k\ge k_4$,
\begin{align*}
&E_{\hat{\theta}}(x^{k+1},x^k) - \bar\omega + \frac{1}{\vartheta + \zeta}A_{k+1} \leq \frac{\zeta}{\vartheta + \zeta}\left(E_{\hat{\theta}}(x^{k},x^{k-1}) - \bar\omega\right) + \frac{1}{\vartheta + \zeta}A_k \nonumber\\
&\overset{\rm(a)} \leq \frac{\zeta}{\vartheta + \zeta}\left(E_{\hat{\theta}}(x^{k},x^{k-1}) - \bar\omega\right) + \frac{\varrho}{\vartheta + \zeta}A_k + (1 - \varrho)\left|\frac{A_k}{\vartheta + \zeta}\right|\nonumber\\
&\overset{\rm(b)} \leq \left(\frac{\zeta}{\vartheta + \zeta} + (1 - \varrho)c_1\right)\left(E_{\hat{\theta}}(x^{k},x^{k-1}) - \bar\omega\right) + \frac{\varrho}{\vartheta + \zeta}A_k\nonumber\\
&\overset{\rm(c)} = \varrho \left(E_{\hat{\theta}}(x^{k},x^{k-1}) - \bar\omega + \frac{1}{\vartheta + \zeta}A_k \right),
\end{align*}
where (a) holds as $\varrho\in(0, 1)$, (b) follows from \eqref{upperA}, and (c) holds because of \eqref{divi}.

Inductively, since $\varrho > 0$, we see that for any $k\geq k_4$,
\begin{equation*}
E_{\hat{\theta}}(x^{k+1},x^k) - \bar\omega + \frac{1}{\vartheta + \zeta}A_{k+1} \leq \varrho^{k - k_4 +1}\left(E_{\hat{\theta}}(x^{k_4},x^{k_4 - 1}) - \bar\omega + \frac{1}{\vartheta + \zeta}A_{k_4}\right),
\end{equation*}
which means, there exists $M>0$ such that, for any $k\geq k_4$,
\begin{align}\label{inequ}
0&\leq E_{\hat{\theta}}(x^k,x^{k-1}) - \bar\omega\leq M\varrho^{k} - \frac{1}{\vartheta + \zeta}A_{k} \notag\\
&\overset{\rm(a)}= M\varrho^{k} \!-\! \frac{L_g}{2(\vartheta + \zeta)}\left(\!\frac{1}{\gamma}\d(x^k, S)^2 - \|x^k - x^{k-1}\|^2\!\right)\!\leq\! M\varrho^{k} \!+\! \frac{L_g}{2(\vartheta + \zeta)} \|x^k - x^{k-1}\|^2\notag\\
&\overset{\rm(b)}\leq M\varrho^{k} + \frac{1}{(\vartheta + \zeta)(1 - \bar{\beta}^2)}\left(E_{\hat{\theta}}(x^k,x^{k-1}) - E_{\hat{\theta}}(x^{k+1},x^{k})\right),
\end{align}
where (a) follows from the definition of $A_k$, and (b) holds because of \eqref{upperK}.

Taking $\mu > \max\{\frac{1}{(\vartheta + \zeta)(1 - \bar{\beta}^2)}, \frac{1}{1 - \varrho}\}$. From $\varrho\in(0, 1)$,  we see that
\begin{equation}\label{defmu}
\mu > 1 \text{ and } 1 - \mu^{-1} > \varrho,
\end{equation}
and from \eqref{inequ} (and \eqref{upperK}, which asserts the nonnegativity of the difference $E_{\hat{\theta}}(x^k,x^{k-1}) - E_{\hat{\theta}}(x^{k+1},x^{k})$), we have that
\begin{equation*}
E_{\hat{\theta}}(x^k,x^{k-1}) - \bar\omega \leq M\varrho^{k} + \mu(E_{\hat{\theta}}(x^k,x^{k-1}) - E_{\hat{\theta}}(x^{k+1},x^{k})),
\end{equation*}
which implies
\begin{equation*}
\mu(E_{\hat{\theta}}(x^{k+1},x^{k}) - \bar\omega) \leq (\mu - 1)\left(E_{\hat{\theta}}(x^k,x^{k-1}) - \bar\omega\right) + M\varrho^{k}.
\end{equation*}
Dividing $\mu>0$ on the both sides in the above display, we see that for any $k\geq k_4$,
\begin{align*}
&E_{\hat{\theta}}(x^{k+1},x^{k}) - \bar\omega \leq \left(1 - \mu^{-1}\right) \left(E_{\hat{\theta}}(x^k,x^{k-1}) - \bar\omega\right) + \frac{M}{\mu}\varrho^{k} \nonumber\\
&= \left(1 - \mu^{-1}\right) \left(E_{\hat{\theta}}(x^k,x^{k-1}) - \bar\omega\right) + \frac{M}{\mu}\left( \frac{1 - \mu^{-1}}{1 - \mu^{-1} - \varrho} - \frac{\varrho}{1 - \mu^{-1} - \varrho}\right)\varrho^{k} \nonumber\\
&= \left(1 - \mu^{-1}\right) \left(E_{\hat{\theta}}(x^k,x^{k-1}) - \bar\omega + \frac{M}{\mu(1 - \mu^{-1} - \varrho)}\varrho^{k} \right) - \frac{M}{\mu(1 - \mu^{-1} - \varrho)}\varrho^{k+1}, \nonumber
\end{align*}
where the division by $1 - \mu^{-1} - \varrho$ is valid thanks to \eqref{defmu}.
Rearranging terms in the above display inequality, we have that for any $k\geq k_4$,
\begin{equation*}
E_{\hat{\theta}}(x^{k+1},x^{k}) - \bar\omega + \frac{M\varrho^{k+1}}{\mu(1 - \mu^{-1} - \varrho)}
\leq \!\left(1 - \mu^{-1}\right)\! \left(E_{\hat{\theta}}(x^k,x^{k-1}) - \bar\omega + \frac{M\varrho^{k}}{\mu(1 - \mu^{-1} - \varrho)} \right)\!.
\end{equation*}
Inductively, since $1 - \mu^{-1} >0$ thanks to \eqref{defmu}, we see that for any $k\geq k_4$,
\begin{align}\label{inductE}
&\!\!\!E_{\hat{\theta}}(x^{k+1},x^{k}) - \bar\omega
 \leq
 E_{\hat{\theta}}(x^{k+1},x^{k}) - \bar\omega + \frac{M\varrho^{k+1}}{\mu(1 - \mu^{-1} - \varrho)} \nonumber\\
&\!\!\!\leq \left(\!1 - \mu^{-1}\!\right)^{k-k_4+1} \!\!\!\ \left(E_{\hat{\theta}}(x^{k_4},x^{k_4-1}) \!-\! \bar\omega \!+\! \frac{M\varrho^{k_4}}{\mu(1 - \mu^{-1} - \varrho)} \right) \overset{\rm(a)} = c_2 \left(\!1 - \mu^{-1}\!\right)^{k + 1},
\end{align}
where (a) holds with $c_2 := \left(1 - \mu^{-1}\right)^{-k_4}\left(E_{\hat{\theta}}(x^{k_4},x^{k_4-1}) - \bar\omega + \frac{M}{\mu(1 - \mu^{-1} - \varrho)}\varrho^{k_4} \right)> 0$.

Finally, we obtain that for any $k\geq k_4$,
\begin{align*}
\|x^{k+1} - x^k\|^2 & \overset{\rm(a)}\leq \frac{2}{L_g(1 - \bar{\beta}^2)}\left(E_{\hat{\theta}}(x^{k+1}, x^k) - E_{\hat{\theta}}(x^{k+2}, x^{k+1})\right)\\
&\overset{\rm(b)}\leq\frac{2}{L_g(1 - \bar{\beta}^2)} \left(E_{\hat{\theta}}(x^{k+1}, x^{k}) - \bar\omega\right)\overset{\rm(c)}\leq \frac{2c_2}{L_g(1 - \bar{\beta}^2)}\left(1 - \mu^{-1}\right)^{k+1},
\end{align*}
where (a) holds because of \eqref{upperK}, (b) follows from $E_{\hat{\theta}}(x^{k+1}, x^k) \geq \bar\omega$ (see \eqref{valueomega}), and (c) holds because of \eqref{inductE}.
Consequently, for any $j\ge k\geq k_4$,
\begin{align}\label{bound}
\sum_{i = k}^{j}\|x^{i+1} - x^i\|&\leq \sum_{i = k}^{\infty} \sqrt{\frac{2c_2}{L_g(1 - \bar{\beta}^2)}}\left(\sqrt{1 - \mu^{-1}}\right)^{i+1}= c_3 \left(\sqrt{1 - \mu^{-1}}\right)^{k+1},
\end{align}
with $c_3 := \sqrt{\frac{2c_2}{L_g(1 - \bar{\beta}^2)}}\cdot\frac{1}{1 - \sqrt{1 - \mu^{-1}}} > 0$,
which implies that $\{x^k\}$ is a Cauchy sequence. Combining this with Remark~\ref{rem43} and Theorem~\ref{subconver}, we see that $\{x^k\}$ converges to a minimizer $x^*$ of \eqref{eq1}. The claimed linear rate of convergence also follows immediately from \eqref{bound}.
\end{proof}

\section{KL exponent and exact penalty}\label{sec5}

In Section~\ref{sec42}, the KL exponent of the $F_{\hat\theta}$ in Theorem~\ref{Edecres}(iii) was used for establishing the convergence rate of the $\{x^k\}$ generated by Algorithm~\ref{alg:Framwork} in the convex setting. In this section, we examine how the KL exponent of functions of the form \eqref{definfalpha} can be deduced from the corresponding problem \eqref{eq1}.

Specifically, we consider the following optimization problem:
\begin{equation}\label{KLproblem}
\min_{x\in \R^n}\hat F(x):= P_1(x) + \delta_C(x) + \delta_{{\cal F}}(x),
\end{equation}
where $P_1:\R^n \to \R$ is convex, $C$ is compact and convex, ${\cal F} := \{x\in \R^n: g_i(x)\le 0, i = 1,\ldots,m\}$ with each $g_i:\mathbb{R}^n\to \R$ being convex, and $C\cap \{x\in \R^n:\; \max_{i=1,\ldots,m}\{g_i(x)\} < 0\}\neq \emptyset$; we also consider the associated penalty function
\begin{equation}\label{Feta}
\hat F_{\eta}(x): = P_1(x) + \delta_C(x) + \eta\max_{i=1,\ldots,m}[g_i(x)]_+,
\end{equation}
where $\eta > 0$. Notice that for \eqref{eq1}, the KL property of the corresponding $\hat F_{\hat\theta}$ was the key for establishing the convergence rate of the $\{x^k\}$ generated by Algorithm~\ref{alg:Framwork}; see Theorem~\ref{Edecres}(iii).

We next recall the definition of exact penalty parameter.
\begin{definition}[Exact penalty parameter]\label{Exact}
Consider \eqref{KLproblem} and \eqref{Feta}. If there exists $\bar{\eta} > 0$ such that for all $\eta \geq \bar{\eta}$,
\[
\Argmin_{x\in \R^n}\hat F_\eta(x) = \Argmin_{x\in \R^n}\hat F(x),
\]
then $\bar{\eta}$ is called an exact penalty parameter of \eqref{KLproblem}.
\end{definition}
We will argue that the set of exact penalty parameters of \eqref{KLproblem} is nonvoid. We start by recalling the following well-known result, whose short proof is included for the convenience of the readers.
\begin{lemma}\label{distF}
  Let $C$ and ${\cal F}$ be as in \eqref{KLproblem}. Then there exist $\kappa>0$ and $\tau > 0$ such that
  \begin{equation}\label{eb2}
  \d(x,C\cap {\cal F})\le \kappa \d(x,{\cal F}) \le \tau \max_{i=1,\ldots,m}[g_i(x)]_+\ \ \ \forall x\in C.
  \end{equation}
\end{lemma}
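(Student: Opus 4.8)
The plan is to prove the two inequalities in \eqref{eb2} separately and then chain them through the intermediate quantity $\d(x,{\cal F})$. The right-hand inequality, which should read $\d(x,{\cal F})\le \tau'\max_{i}[g_i(x)]_+$ on $C$ for a suitable $\tau'>0$, is exactly a Robinson-type error bound, so I would invoke Lemma~\ref{RobEB} with $h:=(g_1,\dots,g_m)$; then $\Omega={\cal F}$, and the point $\hat x$ supplied by the Slater hypothesis (with $g_i(\hat x)<0$ for all $i$) serves as the required $x^s$. The inclusion $\{u:\|u\|\le\delta_0\}\subseteq g(\hat x)+\R^m_+$ holds with $\delta_0:=-\max_i g_i(\hat x)>0$, since $\|u\|\le\delta_0$ forces $u_i\ge-\delta_0\ge g_i(\hat x)$. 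A coordinatewise computation of the projection of $0$ onto $g(x)+\R^m_+=\{u:u_i\ge g_i(x)\}$ gives $\d(0,g(x)+\R^m_+)=\|([g_1(x)]_+,\dots,[g_m(x)]_+)\|\le\sqrt m\,\max_i[g_i(x)]_+$, and compactness of $C$ bounds $\|x-\hat x\|\le D:=\sup_{x\in C}\|x-\hat x\|<\infty$. Together these yield $\d(x,{\cal F})\le(\sqrt m\,D/\delta_0)\max_i[g_i(x)]_+$ for all $x\in C$.

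For the left-hand inequality, $\d(x,C\cap{\cal F})\le\kappa\,\d(x,{\cal F})$ on $C$, I would exploit that $\hat x$ lies in the interior of ${\cal F}$: since each $g_i$ is convex, hence continuous, and $\max_i g_i(\hat x)<0$, there is $\rho>0$ with the closed ball $B(\hat x,\rho)\subseteq{\cal F}$. Fix $x\in C$, let $y$ be its projection onto the closed convex set ${\cal F}$ so that $\|x-y\|=\d(x,{\cal F})=:\delta$, and consider the segment point $z_t:=(1-t)x+t\hat x$, which lies in $C$ for every $t\in[0,1]$ because $x,\hat x\in C$ and $C$ is convex. The key observation is that the convex set ${\cal F}$ contains $\conv(\{y\}\cup B(\hat x,\rho))$, whose points are exactly those of the form $(1-t)y+t\hat x+tu$ with $\|u\|\le\rho$ and $t\in[0,1]$. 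Writing $z_t=(1-t)y+t\hat x+(1-t)(x-y)$ and choosing $t=\delta/(\delta+\rho)$ makes the residual match, namely $(1-t)\delta=t\rho$, so that $z_t\in{\cal F}$; hence $z_t\in C\cap{\cal F}$ and $\d(x,C\cap{\cal F})\le\|x-z_t\|=t\|\hat x-x\|\le(D/\rho)\,\delta$, giving $\kappa:=D/\rho$.

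Finally, I would set $\tau:=\kappa\cdot(\sqrt m\,D/\delta_0)$ so that the chain $\d(x,C\cap{\cal F})\le\kappa\,\d(x,{\cal F})\le\tau\max_i[g_i(x)]_+$ holds for every $x\in C$, with the degenerate case $\max_i[g_i(x)]_+=0$ (i.e.\ $x\in C\cap{\cal F}$) being trivial. The main obstacle is the left-hand inequality, which is a bounded-linear-regularity statement for the pair $\{C,{\cal F}\}$: the ``ice-cream cone'' construction $\conv(\{y\}\cup B(\hat x,\rho))$ is precisely what turns the interiority afforded by Slater's condition into the desired Lipschitzian bound, and the delicate point is to select $t$ so that $z_t$ lands simultaneously in $C$ (automatic by convexity) and in ${\cal F}$ (forced by the residual estimate), while keeping $t$ proportional to $\delta$.
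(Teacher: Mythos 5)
Your proposal is correct, and it splits into two halves of different character relative to the paper. For the right-hand inequality your route is exactly the paper's: both invoke Lemma~\ref{RobEB} with $h=(g_1,\dots,g_m)$, $\Omega={\cal F}$, $x^s=\hat x$ the Slater point, and $\delta_0=\left|\max_i g_i(\hat x)\right|$, then use compactness of $C$ to bound $\|x-\hat x\|$; your explicit evaluation $\d\bigl(0,g(x)+\R^m_+\bigr)=\bigl\|([g_1(x)]_+,\dots,[g_m(x)]_+)\bigr\|\le\sqrt{m}\,\max_i[g_i(x)]_+$ just makes precise a step the paper leaves implicit. Where you genuinely diverge is the left-hand inequality: the paper obtains $\d(x,C\cap{\cal F})\le\kappa\,\d(x,{\cal F})$ on $C$ by citing the bounded linear regularity result \cite[Corollary~3]{BauschkeBorweinLi99}, whereas you prove it from scratch via the ``ice-cream cone'' construction $\conv\bigl(\{y\}\cup B(\hat x,\rho)\bigr)\subseteq{\cal F}$, choosing $t=\delta/(\delta+\rho)$ so that $z_t=(1-t)x+t\hat x$ lands in ${\cal F}$ while $\|x-z_t\|=t\|x-\hat x\|\le(D/\rho)\,\delta$. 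I checked the details: the identity $z_t=(1-t)y+t\hat x+(1-t)(x-y)$ together with $(1-t)\delta=t\rho$ does place $z_t$ in the hull, membership in $C$ is automatic by convexity, and the degenerate case $\delta=0$ is harmless, so the argument is sound and even yields the explicit constant $\kappa=D/\rho$. The trade-off: the paper's citation is shorter and rests on a general tool (bounded linear regularity for collections of closed convex sets under a relative-interior-type condition), which would survive, e.g., replacing compactness of $C$ by boundedness of the region of interest; your argument is self-contained and elementary, at the modest cost of using compactness of $C$ and the full-dimensional interiority of the Slater point (a ball $B(\hat x,\rho)\subseteq{\cal F}$), which is available here because the $g_i$ are finite-valued convex, hence continuous.
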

\begin{proof}
  First, since $C\cap \{x\in \R^n:\; \max_{i=1,\ldots,m}\{g_i(x)\} < 0\}\neq \emptyset$ (say, it contains $\hat x$), we deduce from \cite[Corollary~3]{BauschkeBorweinLi99} that there exists $\kappa > 0$ such that
  \begin{equation}\label{eb1}
  \d(x,C\cap {\cal F}) \le \kappa \d(x,{\cal F})\ \ \ \ \ \forall x\in C.
  \end{equation}
  We then apply Lemma~\ref{RobEB} with $\Omega:=\mathcal{F}$, $h(x) = (g_1(x), g_2(x), \ldots, g_m(x))$, $x^s = \hat{x}$ and $\delta_0 = \left|\max_{i=1,\ldots,m}\{g_i(\hat{x})\}\right|$ to obtain
\begin{equation*}
\d(x, \mathcal{F}) \leq \frac{\|x - \hat{x}\|}{\left|\max_{i=1,\ldots,m}\{g_i(\hat{x})\}\right|}\d(0, g(x) + \R^m_+) ~~~~ \forall x\in \R^n.
\end{equation*}
Since $C$ is compact, we deduce further that there exists $M_1> 0 $ such that
\begin{equation}\label{upperxF}
\d(x, \mathcal{F}) \leq M_1\max_{i = 1,\cdots,m}[g_i(x)]_+ ~~ \forall x\in C.
\end{equation}
The desired conclusion now follows upon combining \eqref{eb1} and \eqref{upperxF}.
\end{proof}
\begin{remark}[Nonemptiness of the set of exact penalty parameters]\label{rem51}
Consider \eqref{KLproblem} and \eqref{Feta}.
Since $C\cap {\cal F}$ is compact and $P_1$ is continuous, we see that $\Argmin \hat F\neq \emptyset$. Using this together with \eqref{eb2}, we can now deduce from \cite[Lemma~3.1]{ChenLuPong16} that any $\eta > \bar L_{P_1}\tau$ is an exact penalty parameter of \eqref{KLproblem}, where $\bar L_{P_1}$ is a Lipschitz continuity modulus for $P_1$ on the compact convex set $C$.
\end{remark}

Now, we show that if the $\hat F$ in \eqref{KLproblem} is a KL function with exponent $\alpha\in (0,1)$, then for any $\eta>\bar{\eta}$, the $\hat F_{\eta}$ in \eqref{Feta} is a KL function with the same exponent, where $\bar{\eta}$ is an exact penalty parameter of \eqref{KLproblem}.

\begin{theorem}[KL exponent of $\hat F_{\eta}$ from that of $\hat F$]\label{FetaKL}
Let $\hat F$ be as in \eqref{KLproblem}, $\bar{x}\in\Argmin \hat F$ and $\bar{\eta}$ be an exact penalty parameter of \eqref{KLproblem}. If $\hat F$ satisfies the KL property with exponent $\alpha\in (0,1)$ at $\bar{x}$, then for any $\eta>\bar{\eta}$, the $\hat F_{\eta}$ defined in \eqref{Feta} satisfies the KL property with exponent $\alpha$ at $\bar{x}$.
\end{theorem}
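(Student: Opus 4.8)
The plan is to exploit the convexity of both $\hat F$ and $\hat F_\eta$ and route the argument through a local error bound. Write $v^\star := \hat F(\bar x) = \min \hat F$ and $S := \Argmin \hat F$, which is nonempty and compact since $C\cap{\cal F}$ is compact and $P_1$ is continuous. Because $\bar\eta$ is an exact penalty parameter and $\eta>\bar\eta$, Definition~\ref{Exact} gives $S = \Argmin \hat F_\eta$, and evaluating at any point of $S$ (where $\max_i[g_i]_+=0$) shows $\min\hat F_\eta = v^\star$ as well. The easy half of the argument is a reduction: since $\hat F_\eta = P_1 + \delta_C + \eta\max_i[g_i]_+$ is convex, for any $x$ with $\hat F_\eta(x)>v^\star$ I take a nearest point $z\in S$ and use the subgradient inequality $v^\star = \hat F_\eta(z)\ge \hat F_\eta(x) + \langle\zeta,z-x\rangle$ for every $\zeta\in\partial\hat F_\eta(x)$ to get $\|\zeta\|\,\d(x,S)\ge \hat F_\eta(x)-v^\star$, hence $\d(0,\partial\hat F_\eta(x))\ge (\hat F_\eta(x)-v^\star)/\d(x,S)$. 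Thus the theorem follows once I establish the local error bound $\d(x,S)\le c\,(\hat F_\eta(x)-v^\star)^{1-\alpha}$ for $x$ near $\bar x$: combined with the previous inequality this yields $\d(0,\partial\hat F_\eta(x))\ge c^{-1}(\hat F_\eta(x)-v^\star)^{\alpha}$, i.e. the KL property with exponent $\alpha$ at $\bar x$ with $\phi(s)=\frac{c}{1-\alpha}s^{1-\alpha}$.

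To obtain this error bound for $\hat F_\eta$ I first record the analogous bound for $\hat F$. Since $\hat F$ is convex and satisfies the KL property with exponent $\alpha$ at $\bar x$, the equivalence between the KL inequality and the Hölderian error bound for convex functions (cf. \cite{li18} and Lemma~\ref{KLinequ}; concretely, via a subgradient-curve length estimate driven by the KL inequality) furnishes constants $c_0>0$, $\epsilon_0>0$ and $\nu_0>0$ with $\d(x,S)\le c_0(\hat F(x)-v^\star)^{1-\alpha}$ for every $x\in C\cap{\cal F}$ satisfying $\|x-\bar x\|\le\epsilon_0$ and $\hat F(x)<v^\star+\nu_0$.

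I then transfer this bound to $\hat F_\eta$ using the exact parameter $\bar\eta$ together with Lemma~\ref{distF}. As $\bar\eta$ is exact, $\min\hat F_{\bar\eta}=v^\star$, so $\hat F_{\bar\eta}(x)\ge v^\star$ for all $x$; writing $\hat F_\eta = \hat F_{\bar\eta} + (\eta-\bar\eta)\max_i[g_i]_+$ yields the key violation bound $\max_i[g_i(x)]_+\le(\eta-\bar\eta)^{-1}(\hat F_\eta(x)-v^\star)$ for all $x$. Now fix $x\in C$ near $\bar x$ with $\hat F_\eta(x)$ close to $v^\star$. If $x\in{\cal F}$ then $\hat F_\eta(x)=\hat F(x)$ and the error bound for $\hat F$ applies verbatim. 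If $x\in C\setminus{\cal F}$, I take a nearest point $\tx\in C\cap{\cal F}$ to $x$, so $\|x-\tx\|=\d(x,C\cap{\cal F})\le\tau\max_i[g_i(x)]_+$ by Lemma~\ref{distF}, and $\tx\to\bar x$ as $x\to\bar x$. Using the Lipschitz continuity of $P_1$ on $C$ (modulus $\bar L_{P_1}$) and $P_1(x)-v^\star\le\hat F_\eta(x)-v^\star$, I bound $\hat F(\tx)-v^\star = P_1(\tx)-v^\star\le (P_1(x)-v^\star)+\bar L_{P_1}\|x-\tx\|\le K(\hat F_\eta(x)-v^\star)$ with $K:=1+\bar L_{P_1}\tau/(\eta-\bar\eta)$. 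Applying the error bound for $\hat F$ at $\tx$ and the triangle inequality $\d(x,S)\le\|x-\tx\|+\d(\tx,S)$ gives $\d(x,S)\le \tau(\eta-\bar\eta)^{-1}(\hat F_\eta(x)-v^\star)+c_0K^{1-\alpha}(\hat F_\eta(x)-v^\star)^{1-\alpha}$. Shrinking the neighborhood so that $\hat F_\eta(x)-v^\star<1$, the linear term is dominated by the $(1-\alpha)$-power term, producing the desired error bound for $\hat F_\eta$ and closing the reduction above.

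I expect the main obstacle to be the infeasible case of this transfer: one must simultaneously control the distance $\|x-\tx\|$ (through Lemma~\ref{distF} and the exact-penalty violation bound), the value gap $\hat F(\tx)-v^\star$ (through the Lipschitz modulus of $P_1$), and the neighborhood so that $\tx$ stays in the region where the error bound for $\hat F$ holds, all with constants independent of $x$. A secondary delicate point is the convex KL-to-error-bound passage for $\hat F$, which uses convexity essentially so that the pointwise KL inequality at the minimizer upgrades to a genuine error bound; the reverse passage from error bound back to KL for $\hat F_\eta$ is, by contrast, the elementary convexity estimate used in the reduction.
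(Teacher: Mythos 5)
Your proof is correct, and its skeleton is the same as the paper's: pass from the KL property of $\hat F$ to a local H\"older error bound for $\hat F$, transfer that bound to $\hat F_\eta$ using the projection onto $C\cap\mathcal{F}$, Lemma~\ref{distF}, the Lipschitz continuity of $P_1$, and the exactness of $\bar\eta$, then convert back to the KL inequality; so this is not a genuinely different route, only a different execution of the middle step. Three points of comparison. (1) The paper invokes \cite[Theorem~5]{bolte17} for both conversions KL $\Leftrightarrow$ error bound; you prove the easy direction (error bound $\Rightarrow$ KL) by hand via the subgradient inequality, which is fine, but for the hard direction (KL at $\bar x$ $\Rightarrow$ error bound) your citation is off: Lemma~\ref{KLinequ} assumes the KL property at \emph{every} minimizer, whereas the hypothesis here is KL only at $\bar x$; the correct tool is precisely \cite[Theorem~5]{bolte17} (the subgradient-curve argument you allude to), so the fact you invoke is true but should be attributed there. (2) Your transfer step is organized more economically than the paper's: by using exactness of $\bar\eta$ at the outset to get the violation bound $\max_i[g_i(x)]_+\le(\eta-\bar\eta)^{-1}\bigl(\hat F_\eta(x)-v^\star\bigr)$, both the displacement $\|x-\tilde x\|$ to the feasible set and the value gap $\hat F(\tilde x)-v^\star$ become directly controlled by $\hat F_\eta(x)-v^\star$, and the leftover linear term is absorbed into the $(1-\alpha)$-power term on a small enough neighborhood. (3) The paper instead carries the penalty-type quantity $P_1(x)-P_1(\bar x)+\kappa_2\max_i[g_i(x)]_+$ through its chain of estimates (using $a^{1-\alpha}+b^{1-\alpha}\le 2^{\alpha}(a+b)^{1-\alpha}$) and appeals to exactness of $\bar\eta$ only at the very end, through a case split ($\eta\ge\kappa_2$ versus $\bar\eta<\eta<\kappa_2$) and the inequality $a+b\le\epsilon^{-1}(a+\epsilon b)$; your version eliminates this case analysis at no cost, while the paper's version has the mild advantage of producing an intermediate bound stated purely in penalty form before any appeal to exactness.
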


\begin{proof}
Fix any $\eta > \bar\eta$. Since $\bar{\eta}$ is an exact penalty parameter of \eqref{KLproblem}, we see that $\Argmin \hat F = \Argmin \hat F_{\eta}$; also, note that $\dom\partial \hat F_{\eta} = C$ and $\dom\partial \hat F = C\cap {\cal F}$.

Since $\hat F$ satisfies the KL property with exponent $\alpha$ at $\bar{x}$, in view of \cite[Theorem~5]{bolte17}, there exist $c>0$ and $a$, $\epsilon\in (0,1)$ such that
\begin{equation}\label{FKL}
\d(x, \Argmin \hat F) \leq c(\hat F(x) - \hat F(\bar{x}))^{1-\alpha},
\end{equation}
whenever $ x\in\dom \partial \hat F = C\cap \mathcal{F}$ satisfies $\|x - \bar{x}\|\leq \epsilon$ 
and $\hat F(\bar{x}) \leq \hat F(x) \leq \hat F(\bar{x}) + a$. Since $\hat F$ is continuous on its domain, by shrinking $\epsilon$ further if necessary, we assume that \eqref{FKL} holds whenever $ x\in\dom \partial \hat F = C\cap \mathcal{F}$ satisfies $\|x - \bar{x}\|\leq \epsilon$.

Next, since $P_1:\R^n\to\R$ is convex, we know that $P_1$ is locally Lipschitz continuous at $\bar{x}$. Hence, there exist $\bar{\epsilon} > 0$ and $\hat L_{P_1}>0$ such that
\begin{equation}\label{LipschP}
|P_1(x) - P_1(y)|\leq \hat L_{P_1}\|x - y\| ~~~~\forall x,y\in \{u\in \R^n: \|u - \bar x\|\le \bar\epsilon\}.
\end{equation}

Now, take $\epsilon_0 := \min\{\epsilon, \bar{\epsilon}\}$. Then for any $x\in C = \dom\partial \hat F_{\eta}$ satisfying $\|x - \bar{x}\|\leq \epsilon_0$, we have upon letting $\Pi_{C\cap\mathcal{F}}(x)$ denote the orthogonal projection of $x$ onto $C\cap \mathcal{F}$ that
\begin{align}\label{distFeta}
&\d(x, \Argmin \hat F_{\eta})\leq \d(\Pi_{C\cap \mathcal{F}} (x), \Argmin \hat F_{\eta}) + \d(x, C\cap \mathcal{F}) \notag\\
&\overset{\rm(a)} = \d(\Pi_{C\cap \mathcal{F}} (x), \Argmin \hat F) + \d(x, C\cap \mathcal{F}) \notag\\
&\overset{\rm(b)}\leq \!c(\hat F(\Pi_{C\cap \mathcal{F}} (x)) \!-\! \hat F(\bar{x}))^{1-\alpha} \!\!+\! \kappa\d(x, \mathcal{F})\!=\! c(P_1(\Pi_{C\cap \mathcal{F}} (x)) \!-\! P_1(\bar{x}))^{1-\alpha} \!\!+\! \kappa\d(x, \mathcal{F}) \notag\\
&\overset{\rm(c)}\leq c(P_1(x) - P_1(\bar{x}) + \hat L_{P_1}\d(x, C\cap \mathcal{F}))^{1-\alpha} + \kappa\d(x, \mathcal{F}) \notag\\
&\overset{\rm(d)}\leq c(P_1(x) - P_1(\bar{x}) + \hat L_{P_1}\kappa\d(x, \mathcal{F}))^{1-\alpha} + \kappa\d(x, \mathcal{F})^{1-\alpha} \notag\\
& \overset{\rm (e)}\le\hat c \big[(P_1(x) - P_1(\bar{x}) + \hat L_{P_1}\kappa\d(x, \mathcal{F}))^{1-\alpha} + [\kappa^{1/({1-\alpha})}\d(x, \mathcal{F})]^{1-\alpha} \big]\notag\\
&\overset{\rm(f)}\leq \!\bar{c}(P_1(x) \!-\! P_1(\bar{x}) \!+\! \kappa_1\d(x, \mathcal{F}))^{1-\alpha}
\!\!\overset{\rm(g)}\leq\! \bar{c}\big(P_1(x) \!-\! P_1(\bar{x}) \!+\! \kappa_2\!\!\max_{i = 1,\cdots,m}\![g_i(x)]_+\big)^{1-\alpha},\!\!
\end{align}
where (a) holds because $\Argmin \hat F = \Argmin \hat F_{\eta}$, (b) holds because of \eqref{eb2}, \eqref{FKL} and the fact that $\|\Pi_{C\cap\mathcal{F}}(x) - \bar{x}\|\leq\|x - \bar{x}\|\leq\epsilon_0\le\epsilon$ (thanks to $\bar{x}\in C\cap\mathcal{F}$ and the projection mapping being nonexpansive), (c) follows from \eqref{LipschP} and the fact that $\epsilon_0 \le \bar\epsilon$, (d) follows from \eqref{eb2} and the facts that $\d(x,\mathcal{F})\leq \|x - \bar{x}\|\leq\epsilon_0\le \epsilon<1$ and $\alpha\in (0,1)$, (e) holds with $\hat{c} = \max\{c, 1\}$, (f) holds with $\bar c = 2^{\alpha}\hat c$ and $\kappa_1 = \hat L_{P_1}\kappa + \kappa^{\frac{1}{1-\alpha}}$ thanks to the fact that $a^{1-\alpha} + b^{1-\alpha}\leq 2^{\alpha}(a + b)^{1-\alpha}$ for any $a$, $b\ge0$ and $\alpha\in (0,1)$, and (g) holds with $\kappa_2 = \kappa_1\tau/\kappa$ thanks to \eqref{eb2}.

Now, if $\eta\geq\kappa_2$, then, from \eqref{distFeta}, we have that
\[
\d(x, \Argmin \hat F_{\eta})\!\leq\! \bar{c}\!\left(P_1(x) - P_1(\bar{x}) + \eta\max_{i = 1,\cdots,m}[g_i(x)]_+\right)^{1-\alpha}\!\!\!\!\!
=\bar{c}(\hat F_{\eta}(x) - \hat F_{\eta}(\bar{x}))^{1-\alpha}.
\]
On the other hand, if $\kappa_2 >\eta>\bar{\eta}$, then, from \eqref{distFeta}, we obtain that
\begin{align*}
&\d(x, \Argmin \hat F_{\eta})\!\leq \!\bar{c}\left(\!P_1(x) \!-\! P_1(\bar{x}) \!+\! \bar{\eta}\!\max_{i = 1,\cdots,m}[g_i(x)]_+ \!+\! (\kappa_2 - \bar{\eta})\!\!\max_{i = 1,\cdots,m}[g_i(x)]_+\!\!\right)^{1-\alpha}\\
&\overset{\rm(a)} \leq \bar{c}\left(\frac{\kappa_2 - \bar{\eta}}{\eta - \bar{\eta}}\right)^{1-\alpha}\!\!\left(P_1(x) - P_1(\bar{x}) + \bar{\eta}\max_{i = 1,\cdots,m}[g_i(x)]_+ + (\eta - \bar{\eta}) \max_{i = 1,\cdots,m}[g_i(x)]_+\right)^{1-\alpha}\\
&= \bar{c}\left(\frac{\kappa_2 - \bar{\eta}}{\eta - \bar{\eta}}\right)^{1-\alpha}\!\!\left(\hat F_{\eta}(x) - \hat F_{\eta}(\bar{x})\right)^{1-\alpha},
\end{align*}
where (a) holds because $a + b\leq\frac{1}{\epsilon}(a + \epsilon b)$ for any $a\geq 0$, $b \geq 0$ and $0<\epsilon\leq 1$.\footnote{We apply this relation to $\epsilon := (\eta-\bar\eta)/(\kappa_2 - \bar\eta)\in (0,1)$, $b:= (\kappa_2 - \bar{\eta})\max_{i = 1,\cdots,m} [g_i(x)]_+ \geq 0$, and $a:= P_1(x) - P_1(\bar{x}) + \bar{\eta}\max_{i = 1,\cdots,m}[g_i(x)]_+$, which is nonnegative because $\bar{\eta}$ is an exact penalty parameter, $\bar{x}\in\Argmin \hat F = \Argmin \hat F_{\bar\eta}$ and $x\in C$.} The desired conclusion now follows immediately upon invoking \cite[Theorem~5]{bolte17}.
\end{proof}

We next comment on how Theorem~\ref{FetaKL} can be applied to find the KL exponent of $F_{\hat\theta}$ in Theorem~\ref{Edecres}(iii); specifically, we will comment on the condition $\eta > \bar\eta$ in Theorem~\ref{FetaKL}. We first recall the following well-known result concerning exact penalty parameters.
\begin{lemma}\label{lem43}
  Consider \eqref{KLproblem} and \eqref{Feta}. If
  $\widetilde\eta > 0$ is such that $\Argmin \hat F_{\widetilde\eta}\cap \Argmin_{x\in{C\cap {\cal F}}}P_1(x)\neq \emptyset$, then $\Argmin \hat F_{\eta} = \Argmin_{x\in {C\cap {\cal F}}}P_1(x)$ whenever $\eta > \widetilde\eta$.
\end{lemma}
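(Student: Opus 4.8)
The plan is to carry out the entire argument at the level of the optimal value and solution set of the constrained problem. Write $p^* := \min_{x\in C\cap {\cal F}} P_1(x)$ and $S := \Argmin_{x\in C\cap {\cal F}} P_1(x)$; since $C\cap {\cal F}$ is nonempty and compact and $P_1$ is continuous, both are well defined, and because $\hat F = P_1 + \delta_{C\cap {\cal F}}$ we have $S = \Argmin \hat F$ and $\min\hat F = p^*$. First I would extract the content of the hypothesis. Pick any $\bar x\in \Argmin \hat F_{\widetilde\eta}\cap S$. Since $\bar x\in S\subseteq C\cap {\cal F}$, the penalty term vanishes at $\bar x$, so $\hat F_{\widetilde\eta}(\bar x)=P_1(\bar x)=p^*$; as $\bar x$ minimizes $\hat F_{\widetilde\eta}$, this shows $\min \hat F_{\widetilde\eta}=p^*$, i.e.
\[
P_1(x) + \widetilde\eta\max_{i=1,\ldots,m}[g_i(x)]_+ \ge p^* \qquad \forall\, x\in C.
\]
This single inequality is the workhorse for both inclusions.

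Next I would prove $S\subseteq \Argmin \hat F_\eta$. For any $x^*\in S$ the penalty term vanishes, so $\hat F_\eta(x^*)=P_1(x^*)=p^*$. For an arbitrary $x\in C$, monotonicity of the penalty in its parameter (using $\eta>\widetilde\eta$ and $\max_i[g_i(x)]_+\ge 0$) combined with the workhorse inequality gives $\hat F_\eta(x)\ge P_1(x)+\widetilde\eta\max_i[g_i(x)]_+\ge p^*$; for $x\notin C$ the value is $+\infty$. Hence each $x^*\in S$ attains the minimum of $\hat F_\eta$, so $S\subseteq \Argmin \hat F_\eta$ and $\min \hat F_\eta=p^*$.

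For the reverse inclusion $\Argmin \hat F_\eta\subseteq S$ I would argue by contradiction. Let $\hat x\in\Argmin \hat F_\eta$, so $\hat x\in C$ and $P_1(\hat x)+\eta\max_i[g_i(\hat x)]_+=p^*$. Set $t:=\max_{i=1,\ldots,m}[g_i(\hat x)]_+$ and suppose $t>0$. Substituting $P_1(\hat x)=p^*-\eta t$ into the workhorse inequality evaluated at $x=\hat x$ yields $p^*-\eta t+\widetilde\eta t\ge p^*$, i.e. $(\widetilde\eta-\eta)t\ge 0$; since $t>0$ this forces $\eta\le\widetilde\eta$, contradicting $\eta>\widetilde\eta$. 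Therefore $t=0$, so $\hat x\in C\cap {\cal F}$ and $P_1(\hat x)=p^*$, i.e. $\hat x\in S$. Combining the two inclusions gives $\Argmin \hat F_\eta = S = \Argmin_{x\in C\cap {\cal F}}P_1(x)$, as required.

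The argument is essentially bookkeeping and has no serious technical obstacle; the only point that needs care — and the crux of the lemma — is the interplay between the non-strict bound $\hat F_{\widetilde\eta}\ge p^*$ available at the parameter $\widetilde\eta$ and the \emph{strict} inequality $\eta>\widetilde\eta$. It is precisely this strictness that rules out positively-violating minimizers of $\hat F_\eta$ and thereby upgrades exactness from the single parameter $\widetilde\eta$ to all larger parameters.
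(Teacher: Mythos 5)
Your proof is correct and follows essentially the same route as the paper's: both hinge on using the hypothesized common minimizer to pin the optimal value of $\hat F_{\widetilde\eta}$ at $p^*=\min_{C\cap{\cal F}}P_1$, and then exploiting the strict inequality $\eta>\widetilde\eta$ to force the constraint violation $\max_i[g_i(\cdot)]_+$ of any minimizer of $\hat F_\eta$ to vanish (your inequality $(\widetilde\eta-\eta)t\ge 0$ is exactly the paper's rearranged chain $(\eta-\widetilde\eta)\max_i[g_i(\tilde x)]_+\le 0$). The only difference is organizational: you distill the hypothesis into a single value inequality $\min\hat F_{\widetilde\eta}=p^*$ and compare everything against $p^*$, whereas the paper carries the reference point $\hat x$ through pointwise inequality chains — the mathematical content is identical.
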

\begin{proof}
Fix any $\eta > \widetilde\eta$ and let $\hat x \in \Argmin \hat F_{\widetilde\eta}\cap \Argmin_{x\in{C\cap {\cal F}}}P_1(x)$.
We first argue that $\Argmin_{x\in{C\cap {\cal F}}}P_1(x) = \Argmin \hat F_{\eta}\cap {\cal F}$. Indeed, if $\tilde x\in \Argmin_{x\in {C\cap {\cal F}}}P_1(x)$, then $\tilde x \in {C\cap {\cal F}}\subseteq {\cal F}$ and hence $\max_{i=1,\ldots,m}[g_i(\tilde x)]_+ = 0$. Moreover, it holds that
\begin{align*}
\hat F_{\eta}(\tilde x) = P_1(\tilde x)\overset{\rm (a)}= P_1(\hat x) = \hat F_{\widetilde\eta}(\hat x)\overset{\rm (b)}\le  \hat F_{\widetilde\eta}(x)\overset{\rm (c)}\le \hat F_{\eta}(x)
\end{align*}
for any $x\in C$, where (a) holds because both $\hat x$ and $\tilde x$ minimize $P_1$ over ${C\cap {\cal F}}$, (b) holds because $\hat x$ also minimizes $\hat F_{\widetilde\eta}$, and (c) holds because $\eta> \widetilde\eta$. As for the converse inclusion, let $\tilde x \in \Argmin \hat F_{\eta}\cap {\cal F}$. Then for any $x\in C\cap {\cal F}$, we have
\[
P_1(\tilde x)= \hat F_{{\eta}}(\tilde x)\le \hat F_{{\eta}}(x) = P_1(x),
\]
where the equalities hold because $u\in {\cal F}$ implies $\max_{i=1,\ldots,m}[g_i(u)]_+=0$. The above arguments establish $\Argmin_{x\in{C\cap {\cal F}}}P_1(x) = \Argmin \hat F_{\eta}\cap {\cal F}$.

To complete the proof, it now suffices to show that $\Argmin \hat F_{\eta}\subseteq {\cal F}$. To this end, let $\tilde x \in \Argmin \hat F_{\eta}$. Then we have
\begin{align*}
&P_1(\tilde x) + \eta \max_{i=1,\ldots,m}[g_i(\tilde x)]_+=\hat F_{\eta}(\tilde x) \le \hat F_{\eta}(\hat x) = P_1(\hat x) + \eta \max_{i=1,\ldots,m}[g_i(\hat x)]_+ \\
&\overset{\rm (a)}= P_1(\hat x) + \widetilde\eta \max_{i=1,\ldots,m}[g_i(\hat x)]_+
= \hat F_{\widetilde\eta}(\hat x) \overset{\rm (b)}\le \hat F_{\widetilde\eta}(\tilde x) = P_1(\tilde x) + \widetilde\eta\max_{i=1,\ldots,m}[g_i(\tilde x)]_+,
\end{align*}
where (a) holds because $\hat x\in {C\cap {\cal F}}$ (hence $\max_{i=1,\ldots,m}[g_i(\hat x)]_+ = 0$) and (b) holds because $\hat x$ minimizes $\hat F_{\widetilde\eta}$. Rearranging terms in the above inequality, we obtain $(\eta-\widetilde\eta)\max_{i=1,\ldots,m}[g_i(\tilde x)]_+ = 0$, which means $\tilde x\in {\cal F}$.
\end{proof}
\begin{remark}[On the condition $\eta > \bar \eta$ in Theorem~\ref{FetaKL}]\label{rem52}
  We comment on the applicability of Theorem~\ref{FetaKL}, which only infers the KL exponent of $\hat F_\eta$ when $\eta > \bar \eta$ for some exact penalty parameter $\bar\eta$.

  Particularly, we consider \eqref{eq1}. Suppose that Assumptions~\ref{B1} and \ref{B2} hold and let $\{(x^k,\theta_k)\}$ be generated by Algorithm~\ref{alg:Framwork}. Using Theorem~\ref{subconver} (see also Remark~\ref{rem43}) and the formula for the subdifferential of $\max_{i=1,\ldots,m}[g_i(\cdot)]_+$ (see \cite[Exercise~8.31]{rock97a}), we deduce from the definition of $F_\eta$ in \eqref{definfalpha} that
\begin{equation}\label{conditionunhappy}
\emptyset\neq \Lambda\subseteq \Argmin F_{\hat\theta} \cap \Argmin_{x\in{C\cap \mathscr{F}}}P_1(x).
\end{equation}
where ${C\cap \mathscr{F}}$ is the feasible set of \eqref{eq1} and $\Lambda$ is the set of accumulation points of $\{x^k\}$.
Combining \eqref{conditionunhappy} with Lemma~\ref{lem43}, we deduce that the set of exact penalty parameters is nonempty; indeed, it contains the interval $(\hat\theta,\infty)$. Hence, if we let $\tilde \eta$ denote the infimum of the set of exact penalty parameters, then $\hat\theta\ge \tilde\eta$.

  Now, note that we have $\theta_k \equiv \hat\theta$ whenever $k\ge N_0$ (where $N_0$ is defined in Theorem~\ref{alpha}) and $\theta_k$ is nondecreasing. Intuitively, it is likely that the update rule of $\theta_k$ will result in $\hat\theta > \tilde\eta$. In this case, Theorem~\ref{FetaKL} asserts that the KL property required in Theorem~\ref{Edecres}(iii) can be inferred from that of $P_1+\delta_C+\delta_{\mathscr{F}}$ in \eqref{eq1}. On the other hand, in the case $\hat\theta = \tilde\eta$, Theorem~\ref{FetaKL} is not applicable for connecting the KL property of $F_{\hat\theta}$ to that of $P_1+\delta_C+\delta_{\mathscr{F}}$.
\end{remark}

\begin{example}\label{RemarkKL}
Suppose that in \eqref{KLproblem}, $P_1 = \|\cdot\|_1$, $C$ is a polytope containing the origin, $m = 1$, and $g_1 = q_1\circ A_1$ for some matrix $A_1\in \R^{s_1\times n}$ and $q_1:\mathbb{R}^{s_1} \rightarrow \mathbb{R}$ taking one of the following forms with $b\in \R^{s_1}$ and $\sigma > 0$ chosen so that the origin is not feasible and that $\inf_{x\in C}g_1(x) < 0$:
\begin{enumerate}[{\rm (i)}]
   \item (Basis pursuit denoising \cite{Ca18}) $q_1(z) = \frac{1}{2}\|z - b\|^2 - \sigma$.
   \item (Logistic loss \cite{HoLS13}) $q_1(z) = \sum_{i=1}^{s_1}\log(1 + \exp(b_iz_i)) - \sigma$ for some $b\in \{-1,1\}^{s_1}$.
 \end{enumerate}
 Let $\bar{\eta}$ be the exact penalty parameter of \eqref{KLproblem}. We deduce from \cite[Section~5.1]{zhang23} and Theorem~\ref{FetaKL} that, for any $\eta>\bar{\eta}$, the KL exponent of the corresponding $\hat F_{\eta}$ in \eqref{Feta} (and hence the corresponding $F_\eta$ in \eqref{definfalpha}) is $\frac{1}{2}$.
\end{example}

\section{Numerical experiments}\label{sec6}

In this section, we perform numerical experiments to illustrate the performance of Algorithm~\ref{alg:Framwork}. Particularly, we consider the following model for compressed sensing:
\begin{equation}\label{CompSen}
\begin{aligned}
\min_{x\in\R^n}\quad &\|x\|_1 - \mu\|x\| \\
{\rm s.t.} \quad & h(Ax - b)\leq \sigma,
\end{aligned}
\end{equation}
where $\mu\in[0, 1)$, $A\in\R^{q\times n}$ has full row rank, $b\in \R^q$, $h: \R^q\rightarrow\R_+$ is an analytic function whose gradient is Lipschitz continuous with modulus $L_{h}$ and satisfies $h(0) = 0$, and $\sigma\in (0, h(-b))$.

Although the feasible region of \eqref{CompSen} is unbounded and Algorithm~\ref{alg:Framwork} cannot be directly applied to solving \eqref{CompSen}, one can argue as in the discussion following \cite[Eq.~(6.2)]{zhang23} that \eqref{CompSen} is equivalent to the following model:
\begin{equation}\label{CompSen1}
\begin{aligned}
\min_{x\in\R^n}\quad &\|x\|_1 - \mu\|x\| \\
{\rm s.t.} \quad & h(Ax - b) \leq \sigma,\\
           \quad & \|x\|_{\infty} \leq M,
\end{aligned}
\end{equation}
where $M: = (1 - \mu)^{-1}\Big(\|A^\dagger b\|_1 - \mu\|A^\dagger b\|\Big)$.\footnote{We also recall from \cite[Section~6.1]{zhang23} that $\|A^\dagger b\|_\infty\le M$ by the construction of $M$.} Notice that the equivalent problem \eqref{CompSen1} is a special case of \eqref{eq1} with $P_1(x) = \|x\|_1$, $P_2(x) = \mu\|x\|$, $m = 1$, $g_1(x) = h(Ax - b)-\sigma$ and $C = \{x: \|x\|_{\infty}\leq M\}$; since $A$ has full row rank and $h(0) = 0< \sigma$, we see that $A^\dagger b \in C\cap \{x: g_1(x) < 0 \}\not=\emptyset$.

Next, we will focus on \eqref{CompSen1} and consider two specific choices of $h$. All numerical experiments are performed in MATLAB R2022a on a 64-bit PC with an Intel(R) Core(TM) i7-10710U CPU (@1.10GHz, 1.61GHz) and 16GB of RAM.

\subsection{$h(\cdot) = \frac{1}{2}\|\cdot\|^2$}
In this subsection, we take $h(\cdot) = \frac{1}{2}\|\cdot\|^2$, then \eqref{CompSen1} becomes
\begin{equation}\label{CompSen1-1}
\begin{aligned}
\min_{x\in\R^n}\quad &\|x\|_1 - \mu\|x\| \\
{\rm s.t.} \quad & 0.5\cdot\|Ax - b\|^2 \leq \sigma,\\
           \quad & \|x\|_{\infty} \leq M.
\end{aligned}
\end{equation}
Notice that $h$ is convex, the Slater condition holds for the feasible region of \eqref{CompSen1-1}, and the origin is not feasible as $\sigma\in(0, \frac{1}{2}\|b\|^2)$. These together with Remark~\ref{rem43} imply that Assumptions~\ref{A1} and \ref{A2} hold. Since the $H$ in \eqref{defH} corresponding to \eqref{CompSen1-1} is clearly semi-algebraic and hence a KL function, one can then apply Theorem~\ref{th2.2} with $\ell_g = 0$ to deduce the convergence of the (whole) sequence $\{x^k\}$ generated by Algorithm~\ref{alg:Framwork} with $\sup_k\beta_k < 1$ for solving \eqref{CompSen1-1}.\footnote{Though we are not considering $\mu = 0$ in our experiments below, we also point out that when $\mu = 0$ in \eqref{CompSen1-1}, thanks to $\sigma\in(0, \frac{1}{2}\|b\|^2)$ and the fact that $A^\dagger b \in C\cap \{x: g_1(x) < 0 \}$, we can deduce from Example~\ref{RemarkKL} that $x\rightarrow \|x\|_1 + \delta_C(x) + \eta [g_1(x)]_+$ is a KL function with exponent $\frac{1}{2}$ whenever $\eta$ exceeds some exact penalty parameter. Therefore, according to Remark~\ref{rem52}, for the sequence $\{(x^k,\theta_k)\}$ generated by Algorithm~\ref{alg:Framwork}, if $\hat\theta$ exceeds some exact penalty parameter, then $\{x^k\}$ converges locally linearly thanks to Theorem~\ref{Edecres}(iii).}

We compare SCP$_{\rm ls}$ in \cite{yu21}, ESQM$_{\rm e}$ (Algorithm~\ref{alg:Framwork} with $\{\beta_k\}$ specified below) and ESQM$_{\rm b}$ (this is a basic version of ESQM obtained by setting $\beta_k \equiv 0$ in Algorithm~\ref{alg:Framwork}). We use the same parameter settings for SCP$_{\rm ls}$ in \cite{yu21}, and the initial point of SCP$_{\rm ls}$ is chosen as $x^0 = A^\dagger b$. For ESQM$_{\rm b}$ and ESQM$_{\rm e}$, we take $L_g = \|A\|^2$, $\ell_g = 0$, $d = 1$ and $\theta_0 = 1$, and their initial points are chosen as $x^0 = 0$. We terminate all algorithms when
\begin{equation}\label{termination}
  \|x^{k+1} - x^k\|< \epsilon\cdot\max\{1, \|x^{k+1}\|\}
\end{equation}
for some $\epsilon > 0$ specified below. The subproblems in these algorithms are solved according to the procedures described in the appendices of \cite{yu21} and \cite{zhang23}.

We use the same strategy of choosing $\beta_k$ as in the FISTA with fixed and adaptive restart described in \cite{DonoghueCandes15}. In more detail, we set the initial values $\vartheta_{-1}=\vartheta_{0}=1$ and define, for $k\geq 0$,
\begin{align}\label{beta}
\beta_k = \frac{\vartheta_{k-1}-1}{\vartheta_{k}} \ \ {\rm with}\ \ \vartheta_{k+1}=\frac{1+\sqrt{1+4\vartheta_{k}^2}}{2},
\end{align}
and we reset $\vartheta_{k-1}=\vartheta_{k}=1$ every $K = 200$ iterations or when $\langle y^{k-1} - x^k, x^k - x^{k-1} \rangle>0$. One can show that $\{\beta_k\}$ generated this way satisfies $\{\beta_k\}\subseteq[0,1)$ and $\sup_{k}\beta_k<1$.

We perform tests on random instances of \eqref{CompSen1-1}. Specifically, we generate an $A\in\R^{q\times n}$ with independent and identically distributed (i.i.d.) standard Gaussian entries, and then normalize this matrix so that each column of it has unit norm. Then we choose a subset $T$ of size $k$ uniformly at random from $\{1, 2, \cdots, n\}$ and a $k$-sparse vector $x_{\rm orig}$ having i.i.d. standard Gaussian entries on $T$ is generated. We let $b = Ax_{\rm orig} + 0.01\cdot\hat{n}$ with $\hat{n}$ being a random vector having i.i.d. standard Gaussian entries, and $\sigma = 0.5\sigma_1^2$ with $\sigma_1 = 1.1\cdot\|0.01\cdot\hat{n}\|$.

In our numerical tests, we let $\mu =0.95$ in \eqref{CompSen1-1} and $(q,n,k) = (720i,2560i,160i)$ with $i\in \{2, 4, 6, 8, 10\}$. For each $i$, we generate 20 random instances as described above. We present the computational results when $\epsilon$ in \eqref{termination} equals $10^{-4}$ and $10^{-6}$ in Table~\ref{table1} and Table~\ref{table2}, respectively, averaged over the $20$ random instances. Here, we present the time for computing the QR decomposition of $A^T$ (denoted by $t_{\rm QR}$), the time for computing $\|A\|^2$ (denoted by $t_{\|A\|}$),\footnote{The $\|A\|^2$ is computed via the Matlab code {\sf norm(A*A')} when $p \le 2000$, and is computed using {\sf eigs(A*A',1,'LM')} otherwise.} the time for computing $x^0 = A^\dagger b$ given the QR factorization of $A^T$ (denoted by $t_{A^\dagger b}$),\footnote{Note that $A^\dagger b$ is used by SCP$_{\rm ls}$ as the initial point and for computing the $M$ in \eqref{CompSen1} for ESQM$_{\rm b}$ and ESQM$_{\rm e}$, while $\|A\|$ is only used by ESQM$_{\rm b}$ and ESQM$_{\rm e}$.} the CPU times of the algorithms,\footnote{The CPU times do not include $t_{\rm QR}$, $t_{\|A\|}$ and $t_{A^\dagger b}$.} the number of iterations (denoted by Iter), the recovery errors $\text{RecErr} := \frac{\|x^* - \xorig\|}{\max\{1, \|\xorig\|\}}$ and the residuals ${\rm Residual} := \frac{\|Ax^* - b\|^2 - \sigma^2_1 }{\sigma^2_1}$, where $x^*$ is the approximate solution returned by the respective algorithm.

\begin{table}[h]
{\color{black}
\caption{Computational results for problem \eqref{CompSen1-1} with $\epsilon =10^{-4}$.}\label{table1}
\begin{center}
{\footnotesize
\begin{tabular}{|c|c|c|c|c|c|c|}\hline
\phantom{\diagbox{Date}{$i$}} & \multicolumn{1}{|c|}{Method} & \multicolumn{1}{|c|}{$i = 2$} & \multicolumn{1}{c|}{ $i = 4$ }
& \multicolumn{1}{c|}{ $i = 6$ } & \multicolumn{1}{|c|}{ $i = 8$ } & \multicolumn{1}{c|}{ $i = 10$ }
\\\cline{1-7}\multirow{6}*{CPU time (sec)} & \multirow{1}*{$t_{\rm QR}$}
&  0.615 &  3.713 & 13.600 & 32.645 & 66.354             \\\cline{2-2} \multirow{1}*{} & \multirow{1}*{$t_{A^\dagger b}$}
&  0.006 &  0.024 &  0.059 &  0.113 &  0.176             \\\cline{2-2} \multirow{1}*{} & \multirow{1}*{$t_{\|A\|}$}
&  0.532 &  1.438 &  4.754 & 11.120 & 21.776             \\\cline{2-7} \multirow{1}*{} & \multirow{1}*{SCP$_{\rm ls}$}
&  2.332 &  8.159 & 18.235 & 32.125 & 48.558             \\\cline{2-2} \multirow{1}*{}  & \multirow{1}*{ESQM$_{\rm b}$}
&  8.157 & 34.875 & 84.801 & 143.836 & 234.291             \\\cline{2-2} \multirow{1}*{}  & \multirow{1}*{ESQM$_{{\rm e}}$}
&  0.559 &  2.230 &  5.401 &  9.111 & 14.713             \\\cline{1-7} \multirow{3}*{Iter} & \multirow{1}*{SCP$_{\rm ls}$}
&    208 &    213 &    211 &    212 &    212             \\\cline{2-2} \multirow{1}*{}     & \multirow{1}*{ESQM$_{\rm b}$}
&   1729 &   1781 &   1819 &   1768 &   1789             \\\cline{2-2} \multirow{1}*{}     & \multirow{1}*{ESQM$_{{\rm e}}$}
&    108 &    112 &    114 &    112 &    113             \\\cline{1-7} \multirow{3}*{RecErr} & \multirow{1}*{SCP$_{\rm ls}$}
&  0.053 &  0.053 &  0.054 &  0.055 &  0.055       \\\cline{2-2} \multirow{1}*{} & \multirow{1}*{ESQM$_{\rm b}$}
&  0.070 &  0.071 &  0.073 &  0.073 &  0.074       \\\cline{2-2} \multirow{1}*{} & \multirow{1}*{ESQM$_{{\rm e}}$}
&  0.051 &  0.051 &  0.052 &  0.053 &  0.053       \\\cline{1-7} \multirow{3}*{Residual} & \multirow{1}*{SCP$_{\rm ls}$}
& -1.61e-05 & -2.01e-05 & -2.07e-05 & -2.06e-05 & -2.03e-05      \\\cline{2-2} \multirow{1}*{} & \multirow{1}*{ESQM$_{\rm b}$}
& 6.36e-07 & 6.04e-07 & 5.42e-07 & 5.60e-07 & 5.38e-07     \\\cline{2-2} \multirow{1}*{} & \multirow{1}*{ESQM$_{{\rm e}}$}
& 1.20e-07 & 1.11e-07 & 9.96e-08 & 9.71e-08 & 1.03e-07    \\\cline{1-7}
\end{tabular}
}
\end{center}
}
\end{table}

\begin{table}[h]
{\color{black}
\caption{Computational results for problem \eqref{CompSen1-1} with $\epsilon =10^{-6}$.}\label{table2}
\begin{center}
{\footnotesize
\begin{tabular}{|c|c|c|c|c|c|c|}\hline
\phantom{\diagbox{Date}{$i$}} & \multicolumn{1}{|c|}{Method} & \multicolumn{1}{|c|}{$i = 2$} & \multicolumn{1}{c|}{ $i = 4$ }
& \multicolumn{1}{c|}{ $i = 6$ } & \multicolumn{1}{|c|}{ $i = 8$ } & \multicolumn{1}{c|}{ $i = 10$ }
\\\cline{1-7}\multirow{6}*{CPU time (sec)} & \multirow{1}*{$t_{\rm QR}$}
&  0.662 &  4.444 & 13.801 & 31.694 & 59.477              \\\cline{2-2} \multirow{1}*{} & \multirow{1}*{$t_{A^\dagger b}$}
&  0.007 &  0.029 &  0.060 &  0.104 &  0.160              \\\cline{2-2} \multirow{1}*{} & \multirow{1}*{$t_{\|A\|}$}
&  0.576 &  1.633 &  4.858 & 10.567 & 20.454              \\\cline{2-7} \multirow{1}*{} & \multirow{1}*{SCP$_{\rm ls}$}
&  2.919 & 10.359 & 22.412 & 38.074 & 58.432              \\\cline{2-2} \multirow{1}*{}  & \multirow{1}*{ESQM$_{\rm b}$}
& 12.849 & 57.570 & 137.098 & 232.529 & 368.612              \\\cline{2-2} \multirow{1}*{}  & \multirow{1}*{ESQM$_{{\rm e}}$}
&  0.936 &  4.470 & 10.606 & 18.551 & 29.806              \\\cline{1-7} \multirow{3}*{Iter} & \multirow{1}*{SCP$_{\rm ls}$}
&    251 &    257 &    257 &    258 &    259              \\\cline{2-2} \multirow{1}*{}     & \multirow{1}*{ESQM$_{\rm b}$}
&   2756 &   2860 &   2954 &   2887 &   2924              \\\cline{2-2} \multirow{1}*{}     & \multirow{1}*{ESQM$_{{\rm e}}$}
&    195 &    220 &    228 &    230 &    237              \\\cline{1-7} \multirow{3}*{RecErr} & \multirow{1}*{SCP$_{\rm ls}$}
&  0.051 &  0.051 &  0.052 &  0.053 &  0.053           \\\cline{2-2} \multirow{1}*{} & \multirow{1}*{ESQM$_{\rm b}$}
&  0.051 &  0.051 &  0.052 &  0.053 &  0.053           \\\cline{2-2} \multirow{1}*{} & \multirow{1}*{ESQM$_{{\rm e}}$}
&  0.051 &  0.051 &  0.052 &  0.052 &  0.053           \\\cline{1-7} \multirow{3}*{Residual} & \multirow{1}*{SCP$_{\rm ls}$}
& -1.61e-09 & -1.86e-09 & -1.85e-09 & -1.67e-09 & -1.84e-09      \\\cline{2-2} \multirow{1}*{} & \multirow{1}*{ESQM$_{\rm b}$}
& 9.09e-11 & 9.04e-11 & 8.71e-11 & 8.74e-11 & 8.85e-11   \\\cline{2-2} \multirow{1}*{} & \multirow{1}*{ESQM$_{{\rm e}}$}
& 5.66e-11 & 1.00e-10 & -4.51e-14 & 4.10e-11 & -4.93e-13   \\\cline{1-7}
\end{tabular}
}
\end{center}
}
\end{table}

From Table~\ref{table1} and Table~\ref{table2}, one can see that ESQM$_{{\rm e}}$ is the fastest algorithm, and the recovery errors of all three methods are comparable.

\subsection{When $h$ is the Lorentzian norm}

In this subsection, we consider $h$ being the Lorentzian norm \cite{CarrBarAys10}, which is defined as follows for any given $\gamma>0$:
\[
\|y\|_{L L_2, \gamma}:=\sum_{i=1}^q \log\left(1 + \frac{y_i^2}{\gamma^2}\right).
\]
Then, problem~\eqref{CompSen1} becomes the following problem:
\begin{equation}\label{CompSen1-2}
\begin{aligned}
\min_{x\in\R^n}\quad & \|x\|_1 - \mu\|x\| \\
{\rm s.t.}\quad & \|Ax - b\|_{L L_2, \gamma}\leq \sigma,\\
\quad & \|x\|_{\infty} \le M.
\end{aligned}
\end{equation}
We first argue that Assumption~\ref{A1} holds for \eqref{CompSen1-2} under our assumptions on $A$ and $\sigma$ in \eqref{CompSen1}. To this end, let $\hat h(y) := \|y\|_{L L_2, \gamma} - \sigma$ for notational simplicity. Then \eqref{CompSen1-2} is an instance of \eqref{eq1} with $g_1(x) = \hat h(Ax - b) - \sigma$ and $C := \{x:\; \|x\|_\infty \le M\}$. Next, recall that $A^\dagger b\in C$ by the construction of $M$. Moreover, observe that for any $x\in C$, we have
\begin{align}\label{RCQexpression}
&\langle \nabla g_1(x), A^\dagger b - x\rangle = \langle A^T\nabla \hat h(Ax-b), A^\dagger b - x\rangle =  \langle \nabla \hat h(Ax-b), b - Ax\rangle\notag\\
& = 2\sum_{i=1}^q\frac{a_i^Tx - b_i}{\gamma^2 + (a_i^Tx - b_i)^2}\cdot (b_i - a_i^Tx)
= -2\sum_{i=1}^q\frac{(a_i^Tx - b_i)^2}{\gamma^2 + (a_i^Tx - b_i)^2},
\end{align}
where $a_i^T$ is the $i$-th row of $A$. Now we consider two cases:
\begin{itemize}
  \item $x\in C\setminus \mathscr{F}$. In this case, suppose that there exist $u_i$, $i\in I(x)$, such that \eqref{A11} holds. Then, in particular, we must have $I(x)$ (defined in Assumption~\ref{A1}) being nonempty, which in turn means $I(x) = \{1\}$. In addition, \eqref{A11} together with \eqref{RCQexpression} implies that $a_i^Tx = b_i$ for all $i$. But then $g_1(x) = \hat h(0) - \sigma = -\sigma < 0$, contradicting the fact that $I(x) = \{1\}$.
  \item $x\in C\cap \mathscr{F}$. In this case, we claim that $g_1(x) + \langle \nabla g_1(x), A^\dagger b - x\rangle < 0$. Suppose to the contrary that $g_1(x) + \langle \nabla g_1(x), A^\dagger b - x\rangle = 0$. Then using $x\in \mathscr{F}$ and \eqref{RCQexpression}, we deduce that $g_1(x) = \langle \nabla g_1(x), A^\dagger b - x\rangle = 0$. The second equality together with \eqref{RCQexpression} implies that $a_i^Tx = b_i$ for all $i$. But then we deduce from this and $g_1(x) = 0$ that
      \[
       0 = g_1(x) = \hat h(0) - \sigma = -\sigma < 0,
      \]
      which is a contradiction. Thus, we have shown that $RCQ(x)$ holds and one can actually choose $y = A^\dagger b$ there.
\end{itemize}
Consequently, Assumption~\ref{A1} holds.

Next, observe that the $\hat h$ has Lipschitz continuous gradient with modulus $\frac{2}{\gamma^2}$. The following proposition shows that $\hat h$ can be represented as the difference of two convex functions $\hat h_1$ and $\hat h_2$ with Lipschitz continuous gradients, and the Lipschitz continuity modulus of $\nabla \hat h_1$ is $\frac{2}{\gamma^2}$ while that of $\nabla \hat h_2$ is $\frac{1}{4\gamma^2}$.
\begin{proposition}
Let $\hat h(y): = \|y\|_{L L_2, \gamma} - \sigma$. Then there exist two convex functions $\hat h_1$ and $\hat h_2$ with Lipschitz continuous gradients such that $\hat h(y) = \hat h_1(y) - \hat h_2(y)$ and the Lipschitz continuity modulus of $\nabla \hat h_1$ is $\frac{2}{\gamma^2}$ while that of $\nabla \hat h_2$ is $\frac{1}{4\gamma^2}$.
\end{proposition}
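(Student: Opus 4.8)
The plan is to exploit the separable structure of $\|\cdot\|_{LL_2,\gamma}$ and reduce everything to a one-dimensional construction. Writing $\psi(t):=\log(1+t^2/\gamma^2)$, we have $\hat h(y)=\sum_{i=1}^q\psi(y_i)-\sigma$, so it suffices to produce scalar convex functions $\psi_1,\psi_2$ with $\psi=\psi_1-\psi_2$, $\psi_1''\le \frac{2}{\gamma^2}$ and $\psi_2''\le\frac1{4\gamma^2}$; one then sets $\hat h_1(y):=\sum_{i=1}^q\psi_1(y_i)-\sigma$ and $\hat h_2(y):=\sum_{i=1}^q\psi_2(y_i)$. Since the Hessians of $\hat h_1$ and $\hat h_2$ are diagonal with entries $\psi_1''(y_i)$ and $\psi_2''(y_i)$, a coordinatewise estimate upgrades the scalar second-derivative bounds to the claimed Lipschitz moduli of $\nabla\hat h_1$ and $\nabla\hat h_2$ (the additive constant $-\sigma$ being irrelevant for gradients), and coordinatewise convexity yields convexity of $\hat h_1$ and $\hat h_2$.

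First I would compute $\psi'(t)=\frac{2t}{\gamma^2+t^2}$ and $\psi''(t)=\frac{2(\gamma^2-t^2)}{(\gamma^2+t^2)^2}$, and determine the range of $\psi''$. A short calculation (substituting $s=t^2$ and differentiating $(\gamma^2-s)/(\gamma^2+s)^2$) shows that $\psi''$ attains its maximum $\frac{2}{\gamma^2}$ at $t=0$ and its minimum $-\frac1{4\gamma^2}$ at $t^2=3\gamma^2$; in particular $\psi''\ge 0$ exactly on $[-\gamma,\gamma]$.

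Next I would build the split by \emph{clipping} the derivative. Define $\psi_1$ to be the even $C^1$ function with $\psi_1(0)=0$, $\psi_1'(t)=\frac{2t}{\gamma^2+t^2}$ for $|t|\le\gamma$, and $\psi_1'(t)=\operatorname{sgn}(t)/\gamma$ for $|t|>\gamma$ (these agree at $|t|=\gamma$, so $\psi_1'$ is continuous, and $\psi_1$ is in fact affine with slope $1/\gamma$ outside $[-\gamma,\gamma]$), and set $\psi_2:=\psi_1-\psi$. Then $\psi_2'\equiv 0$ on $[-\gamma,\gamma]$ and $\psi_2'(t)=\operatorname{sgn}(t)/\gamma-\frac{2t}{\gamma^2+t^2}$ for $|t|>\gamma$, so $\psi_1''=\psi''\cdot\mathbf 1_{[-\gamma,\gamma]}$ and $\psi_2''=-\psi''\cdot\mathbf 1_{\{|t|>\gamma\}}$, both continuous. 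From the range computation of the previous paragraph, $0\le\psi_1''\le\frac{2}{\gamma^2}$ and $0\le\psi_2''\le\frac1{4\gamma^2}$ everywhere (the latter bound coming precisely from the minimum of $\psi''$ at $t^2=3\gamma^2$), which delivers convexity of $\psi_1,\psi_2$ together with the sharp Lipschitz constants $\frac{2}{\gamma^2}$ and $\frac1{4\gamma^2}$ for $\psi_1',\psi_2'$.

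I expect the only genuinely computational step to be locating the extrema of $\psi''$, and in particular confirming $\min\psi''=-\frac1{4\gamma^2}$, which is what pins down the modulus $\frac1{4\gamma^2}$ of $\nabla\hat h_2$; the remaining steps (continuity of $\psi_1'$ at $|t|=\gamma$, the coordinatewise passage from the scalar bounds to the Euclidean Lipschitz estimate $\|\nabla\hat h_j(y)-\nabla\hat h_j(y')\|\le L_j\|y-y'\|$, and discarding the constant $-\sigma$) are routine. A minor point worth noting is that $\psi_1'$ and $\psi_2'$ have corners at $|t|=\gamma$; however, since $\psi_1''$ and $\psi_2''$ remain continuous and bounded there, $\psi_1'$ and $\psi_2'$ are globally Lipschitz with the stated constants, so no additional care is needed.
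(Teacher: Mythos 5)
Your proposal is correct and takes essentially the same approach as the paper: both decompose the scalar function by splitting its second derivative into positive and negative parts (your derivative-clipping construction of $\psi_1$ yields exactly the function whose second derivative is $[\psi'']_+$, which the paper builds instead by double integration), and both pin down the moduli via the extrema $\psi''(0)=\frac{2}{\gamma^2}$ and $\min\psi''=-\frac{1}{4\gamma^2}$ attained at $t^2=3\gamma^2$. The only differences are cosmetic—the paper normalizes to $\log(1+t^2)$ and rescales by $\gamma$ at the end—and your closing remark about ``corners'' in $\psi_1'$ is inaccurate (it is in fact $C^1$ since $\psi_1''$ is continuous and vanishes at $\pm\gamma$) but harmless.
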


\begin{proof}
First, notice that
\[
\frac{d^2}{dt^2}\log(1 + t^2) = \frac{2(1 - t^2)}{(1 + t^2)^2} = \left[\frac{2(1 - t^2)}{(1+t^2)^2}\right]_+ - \left[\frac{2(1 - t^2)}{(1+t^2)^2}\right]_-,
\]
where $s_+ := \max\{s,0\}\ge 0$ and $s_- := -\min\{s,0\}\ge 0$ for any $s\in \R$.
Now, define, for each $t\in \R$,
\[
r_1(t) =\int_0^t (t-s)\left[\frac{2(1 - s^2)}{(1+s^2)^2}\right]_+ds{\text{~ and ~}} r_2(t) =\int_0^t (t-s)\left[\frac{2(1 - s^2)}{(1+s^2)^2}\right]_-ds.
\]
Then
$r_1''(t) = \left[\frac{2(1 - t^2)}{(1 + t^2)^2}\right]_+$ and $r_2''(t) = \left[\frac{2(1 - t^2)}{(1 + t^2)^2}\right]_-$, showing that $r_1$ and $r_2$ are convex. Moreover, one can observe that $r_1(0) = r_2(0) = r'_1(0) = r'_2(0) = 0$, and a direct computation shows that $\log(1+t^2) = r_1(t) - r_2(t)$, $\sup_t |r_1''(t)| = 2$ and $\sup_{t}|r_2''(t)| = \frac{1}{4}$.
Taking
\[
\hat h_1(y) = \sum_{i = 1}^m r_1(y_i/\gamma) - \sigma, \text{~  and  ~} \hat h_2(y) = \sum_{i = 1}^m r_2(y_i/\gamma),
\]
one can see that $\hat h_1$ and $\hat h_2$ are two convex functions with Lipschitz continuous gradients, and $\hat h(y) = \hat h_1(y) - \hat h_2(y)$. Furthermore, the Lipschitz continuity modulus of $\nabla \hat h_1$ and $\nabla \hat h_2$ are $\frac{2}{\gamma^2}$ and $\frac{1}{4\gamma^2}$, respectively.
\end{proof}
Recall that the origin is not feasible for \eqref{CompSen1-2} under our assumptions on $A$ and $\sigma$ in \eqref{CompSen1}. In view of this, the above discussions, and the observation that the $H$ in \eqref{defH} corresponding to \eqref{CompSen1-1} is a subanalytic function that is continuous on its closed domain (and hence a KL function in view of \cite[Theorem~3.1]{bolte07}), one can apply Theorem~\ref{th2.2} with $L_g = \frac{2\|A\|^2}{\gamma^2}$ and $\ell_g = \frac{\|A\|^2}{4\gamma^2}$ to deduce the convergence of the $\{x^k\}$ generated by Algorithm~\ref{alg:Framwork} with $\sup_{k}\beta_k < \sqrt{\frac{L_g}{L_g + \ell_g}} = \sqrt{\frac89}$ for solving \eqref{CompSen1-2}.

As in the previous subsection, we compare SCP$_{\rm ls}$, ESQM$_{\rm b}$ and ESQM$_{\rm e}$. For SCP$_{\rm ls}$, we use the same parameter settings in \cite{yu21}, and initialize it at $x^0 = A^\dagger b$. For ESQM$_{\rm b}$ and ESQM$_{\rm e}$, we take $L_g = \frac{2\|A\|^2}{\gamma^2}$, $\ell_g = \frac{\|A\|^2}{4\gamma^2}$, $d = \frac{\gamma^2}{150\|A\|^2}$ and $\theta_0 = 1.1\gamma$, and they are initialized at $x^0 = 0$. We terminate all algorithms when \eqref{termination} holds for some $\epsilon > 0$ specified below.  Furthermore, the subproblems in these algorithms are solved according to the procedures described in the appendices of \cite{yu21} and \cite{zhang23}.

We also choose $\{\beta_k\}$ as described in \eqref{beta} but we set the fixed restart frequency as $K = 48$. This parameter will ensure that $\{\beta_k\}$ satisfies $\{\beta_k\}\subseteq\left[0,\sqrt{\frac{L_g}{L_g + \ell_g}}\right)$ and $ \sup_{k}\beta_k<\sqrt{\frac{L_g}{L_g + \ell_g}}$.

We perform tests on random instances of \eqref{CompSen1-1}. As in the previous section, we generate an $A\in\R^{q\times n}$ with i.i.d. standard Gaussian entries, and then normalize its columns. We then choose a subset $T$ of size $k$ uniformly at random from $\{1, 2, \cdots, n\}$ and generate a $k$-sparse vector $x_{\rm orig}$ with i.i.d. standard Gaussian entries on $T$. We let $b = Ax_{\rm orig} + 0.01\cdot\bar{n}$ with $\bar{n}_i\sim{\rm Cauchy}(0, 1)$, specifically, we generate $\bar{n}_i$ as $\tan(\pi(\tilde{n}_i - \frac{1}{2}))$ with $\tilde{n}$ being a random vector with i.i.d. entries uniformly chosen in $[0, 1]$. We then set $\sigma = 1.05\cdot\|0.01\cdot\bar{n}\|_{L L_2, \gamma}$ with $\gamma = 0.08$.

In our numerical tests, we let $\mu =0.95$ in \eqref{CompSen1-2} and $(q,n,k) = (720i,2560i,80i)$ with $i\in \{2, 4, 6, 8, 10\}$. For each $i$, we generate 20 random instances as described above. The computational results for $\epsilon$ in \eqref{termination} being $10^{-4}$ and $10^{-6}$ are respectively presented in Table~\ref{table3} and Table~\ref{table4}, averaged over the $20$ random instances. As before, we present the time for computing the QR decomposition of $A^T$ (denoted by $t_{\rm QR}$), the time for computing $\|A\|^2$ (denoted by $t_{\|A\|}$), the time for computing $x^0 = A^\dagger b$ given the QR factorization of $A^T$ (denoted by $t_{A^\dagger b}$), the CPU times of the algorithms,\footnote{The CPU times do not include $t_{\rm QR}$, $t_{\|A\|}$ and $t_{A^\dagger b}$.} the number of iterations (denoted by Iter), the recovery errors $\text{RecErr} := \frac{\|x^* - \xorig\|}{\max\{1, \|\xorig\|\}}$ and the residuals ${\rm Residual} := \frac{\|Ax^* - b\|_{LL_2,\gamma} - \sigma }{\sigma}$, where $x^*$ is the approximate solution returned by the respective algorithm.

\begin{table}[h]
{\color{black}
\caption{Computational results for problem \eqref{CompSen1-2} with $\varepsilon = 10^{-4}$. }\label{table3}
\begin{center}
{\footnotesize
\begin{tabular}{|c|c|c|c|c|c|c|}\hline
\phantom{\diagbox{Date}{$i$}} & \multicolumn{1}{|c|}{Method} & \multicolumn{1}{|c|}{$i = 2$} & \multicolumn{1}{c|}{ $i = 4$ }
& \multicolumn{1}{c|}{ $i = 6$ } & \multicolumn{1}{|c|}{ $i = 8$ } & \multicolumn{1}{c|}{ $i = 10$ }
\\ \cline{1-7}\multirow{6}*{CPU time (sec)} & \multirow{1}*{$t_{\rm QR}$}
&  0.577 &  4.143 & 11.775 & 27.407 & 50.278             \\ \cline{2-2} \multirow{1}*{} & \multirow{1}*{$t_{A^\dagger b}$}
&  0.005 &  0.026 &  0.049 &  0.088 &  0.140             \\ \cline{2-2} \multirow{1}*{} & \multirow{1}*{$t_{\|A\|}$}
&  0.467 &  1.548 &  4.432 &  9.593 & 17.722             \\ \cline{2-7} \multirow{1}*{} & \multirow{1}*{SCP$_{\rm ls}$}
&  1.186 &  6.915 &  8.429 & 45.353 & 29.767             \\ \cline{2-2} \multirow{1}*{}  & \multirow{1}*{ESQM$_{\rm b}$}
&  2.587 & 11.985 & 26.931 & 47.601 & 75.835             \\ \cline{2-2} \multirow{1}*{}  & \multirow{1}*{ESQM$_{{\rm e}}$}
&  0.561 &  2.557 &  5.635 &  9.984 & 15.804             \\ \cline{1-7} \multirow{3}*{Iter} & \multirow{1}*{SCP$_{\rm ls}$}
&    120 &    195 &    110 &    354 &    153             \\ \cline{2-2} \multirow{1}*{}     & \multirow{1}*{ESQM$_{\rm b}$}
&    586 &    609 &    607 &    608 &    613             \\ \cline{2-2} \multirow{1}*{}     & \multirow{1}*{ESQM$_{{\rm e}}$}
&    120 &    126 &    125 &    126 &    127             \\ \cline{1-7} \multirow{3}*{RecErr} & \multirow{1}*{SCP$_{\rm ls}$}
&  0.092 &  0.090 &  0.092 &  0.092 &  0.092       \\ \cline{2-2} \multirow{1}*{} & \multirow{1}*{ESQM$_{\rm b}$}
&  0.096 &  0.093 &  0.095 &  0.095 &  0.096       \\ \cline{2-2} \multirow{1}*{} & \multirow{1}*{ESQM$_{{\rm e}}$}
&  0.092 &  0.089 &  0.091 &  0.091 &  0.092       \\ \cline{1-7} \multirow{3}*{Residual} & \multirow{1}*{SCP$_{\rm ls}$}
& -1.91e-07 & -2.26e-07 & -2.31e-07 & -2.68e-07 & -2.74e-07  \\ \cline{2-2} \multirow{1}*{} & \multirow{1}*{ESQM$_{\rm b}$}
& 8.81e-08 & 8.86e-08 & 8.58e-08 & 8.61e-08 & 8.51e-08   \\ \cline{2-2} \multirow{1}*{} & \multirow{1}*{ESQM$_{{\rm e}}$}
& 1.02e-08 & 1.23e-08 & 1.46e-08 & 5.75e-09 & 6.40e-09   \\ \cline{1-7}
\end{tabular}
}
\end{center}
}
\end{table}

\begin{table}[h]
{\color{black}
\caption{Computational results for problem \eqref{CompSen1-2} with $\varepsilon = 10^{-6}$. }\label{table4}
\begin{center}
{\footnotesize
\begin{tabular}{|c|c|c|c|c|c|c|}\hline
\phantom{\diagbox{Date}{$i$}} & \multicolumn{1}{|c|}{Method} & \multicolumn{1}{|c|}{$i = 2$} & \multicolumn{1}{c|}{ $i = 4$ }
& \multicolumn{1}{c|}{ $i = 6$ } & \multicolumn{1}{|c|}{ $i = 8$ } & \multicolumn{1}{c|}{ $i = 10$ }
\\ \cline{1-7}\multirow{6}*{CPU time (sec)} & \multirow{1}*{$t_{\rm QR}$}
&  0.558 &  4.093 & 12.902 & 31.823 & 61.492             \\ \cline{2-2} \multirow{1}*{} & \multirow{1}*{$t_{A^\dagger b}$}
&  0.006 &  0.029 &  0.059 &  0.112 &  0.180             \\ \cline{2-2} \multirow{1}*{} & \multirow{1}*{$t_{\|A\|}$}
&  0.466 &  1.546 &  4.660 & 10.864 & 21.334             \\ \cline{2-7} \multirow{1}*{} & \multirow{1}*{SCP$_{\rm ls}$}
&  1.338 &  8.106 &  9.814 & 48.802 & 36.869             \\ \cline{2-2} \multirow{1}*{}  & \multirow{1}*{ESQM$_{\rm b}$}
&  4.006 & 19.608 & 41.110 & 72.591 & 120.954             \\ \cline{2-2} \multirow{1}*{}  & \multirow{1}*{ESQM$_{{\rm e}}$}
&  0.766 &  3.765 &  7.715 & 13.936 & 23.430             \\ \cline{1-7} \multirow{3}*{Iter} & \multirow{1}*{SCP$_{\rm ls}$}
&    136 &    214 &    127 &    372 &    171             \\ \cline{2-2} \multirow{1}*{}     & \multirow{1}*{ESQM$_{\rm b}$}
&    882 &    914 &    914 &    914 &    919             \\ \cline{2-2} \multirow{1}*{}     & \multirow{1}*{ESQM$_{{\rm e}}$}
&    164 &    169 &    168 &    173 &    174             \\ \cline{1-7} \multirow{3}*{RecErr} & \multirow{1}*{SCP$_{\rm ls}$}
&  0.092 &  0.089 &  0.091 &  0.091 &  0.092       \\ \cline{2-2} \multirow{1}*{} & \multirow{1}*{ESQM$_{\rm b}$}
&  0.092 &  0.089 &  0.091 &  0.091 &  0.092       \\ \cline{2-2} \multirow{1}*{} & \multirow{1}*{ESQM$_{{\rm e}}$}
&  0.092 &  0.089 &  0.091 &  0.091 &  0.092       \\ \cline{1-7} \multirow{3}*{Residual} & \multirow{1}*{SCP$_{\rm ls}$}
& -2.31e-11 & -2.37e-11 & -3.19e-11 & -2.94e-11 & -1.98e-11  \\ \cline{2-2} \multirow{1}*{} & \multirow{1}*{ESQM$_{\rm b}$}
& 8.62e-12 & 8.68e-12 & 8.39e-12 & 8.44e-12 & 8.29e-12    \\ \cline{2-2} \multirow{1}*{} & \multirow{1}*{ESQM$_{{\rm e}}$}
& 2.23e-12 & 3.59e-12 & 3.89e-12 & 7.46e-13 & 1.19e-12    \\ \cline{1-7}
\end{tabular}
}
\end{center}
}
\end{table}

From Table~\ref{table3} and Table~\ref{table4}, we observe a similar pattern as shown in Table~\ref{table1} and Table~\ref{table2}, i.e., ESQM$_{{\rm e}}$ is the fastest algorithm, and the recovery errors of all three methods are comparable.

\bmhead{Acknowledgments}
The first author is supported in part by the National Natural Science Foundation of China (11901414) and (11871359). The second author is supported in part by the Hong Kong Research Grants Council PolyU153001/22p.

%

\end{document}